\theoremstyle{definition}\newtheorem{theorem}{Theorem}[section]\newtheorem{lemma}[theorem]{Lemma}\newtheorem{remark}[theorem]{Remark}\newtheorem{proposition}[theorem]{Proposition}\newtheorem{corollary}[theorem]{Corollary}\newtheorem{definition}[theorem]{Definition}\newtheorem{Ex}[theorem]{Example}
\DeclareMathOperator{\Ue}{\mathrm{U}}\DeclareMathOperator{\gr}{\mathrm{gr}}\DeclareMathOperator{\Sa}{\mathrm{S}}\DeclareMathOperator{\Var}{\mathrm{Var}}\DeclareMathOperator{\Ann}{\mathrm{Ann}}
\newcounter{AP}
\begin{document}
\author{Alexey Petukhov}\address{Alexey Petukhov:
The University of Manchester, Oxford Road M13 9PL, Manchester, UK\\ on leave from the Institute for Information Transmission Problems, Bolshoy Karetniy 19-1, Moscow 127994, Russia}
\email{alex-{}-2@yandex.ru}
\title{Finite-dimensional representations of minimal nilpotent W-algebras and zigzag algebras}\maketitle
\begin{abstract}Let $\frak g$ be a simple finite-dimensional Lie algebra over an algebraically closed field $\mathbb F$ of characteristic 0. We denote by $\Ue(\frak g)$ the universal enveloping algebra of $\frak g$. To any nilpotent element $e\in \frak g$ one can attach an associative (and noncommutative as a  general rule) algebra $\Ue(\frak g, e)$ which is in a proper sense a ``tensor factor'' of $\Ue(\frak g)$. In this article we consider the case in which $\frak g$ is simple and $e$ belongs of the minimal nonzero nilpotent orbit of $\frak g$. Under these assumptions $\Ue(\frak g, e)$ was described explicitly in terms of generators and relations. One can expect that the representation theory of $\Ue(\frak g, e)$ would be very similar to the representation theory of $\Ue(\frak g)$. For example one can guess that the category of finite-dimensional $\Ue(\frak g, e)$-modules is semisimple.

The goal of this article is to show that this is the case if $\frak g$ is not simply-laced. We also show that, if $\frak g$ is simply-laced and is not of type $A_n$, then the regular block of finite-dimensional $\operatorname{U}(\frak g, e)$-modules is equivalent to the category of finite-dimensional modules of a zigzag algebra.\end{abstract}
\section{Introduction}
Let $\frak g$ be a simple finite-dimensional Lie algebra over an algebraically closed field $\mathbb F$ of characteristic 0. We denote by $\Ue(\frak g)$ the universal enveloping algebra of $\frak g$. To any nilpotent element $e\in \frak g$ one can attach an associative (and noncommutative as a  general rule) algebra $\Ue(\frak g, e)$ which is in a proper sense a ``tensor factor'' of $\Ue(\frak g)$, see~\cite[Theorem~1.2.1]{LosQ}, \cite[Theorem~2.1]{Pet}. The notion of W-algebra can be traced back to the work~\cite{Lyn}, see also~\cite{Kos}. The modern definition of it (which is valid for all nilpotent elements) was given by A.~Premet~\cite{Pr02}. It turns out that the simple finite-dimensional modules of W-algebras are closely related to the primitive ideals of $\Ue(\frak g, e)$, see~\cite[Conjecture~1.2.1]{Los}. The simple finite-dimensional modules of W-algebras attract a considerable attention in the last decade, see~\cite{BG, Br,  BK,  Do, LO, Los,  PT}.

In this article we focus on the case in which $e$ belongs to the minimal nonzero nilpotent orbit of $\frak g$. Under these assumptions $\Ue(\frak g, e)$ was described explicitly in terms of generators and relations in~\cite[Theorem~1.1]{Pr1}. Moreover, a gap between primitive ideals of $\Ue(\frak g)$ and primitive ideals of $\Ue(\frak g, e)$ is very small, see~\cite[Theorem~5.3]{Pr1}. One can guess that the representation theory of $\Ue(\frak g, e)$ would be very similar to the representation theory of $\Ue(\frak g)$. For example one can guess that the category of finite-dimensional $\Ue(\frak g, e)$-modules is semisimple.

We will show that, under the assumption that $\frak g\not\cong\frak{sl}(n)$, this is true if and only if $\frak g$ is not simply-laced, see Theorems~\ref{T1},~\ref{T1nint}. Moreover, we show that, if $\frak g$ is simple simply-laced and the Dynkin diagram $\Gamma$ of $\frak g$ is not of type $A_n$, then the regular block of the category of finite-dimensional modules of $\Ue(\frak g, e)$ is equivalent to the category of finite-dimensional representations of a zigzag algebra $A(\Gamma)$ which was introduced by R.~Huerfano and M.~Khovanov~\cite{HK} (they also provided a description of indecomposable modules of $A(\Gamma)$). Explicit generators and relations for these algebras are presented in the statement of Theorem~\ref{Talg}.

The cases of $\Gamma$ of types $C_n$ and $G_2$ were considered in~\cite[Corollary~7.1]{Pr1}. Therefore the semisimplicity result is new if $\frak g$ is of type $B_n$ and $F_4$. If $\frak g\cong\frak{sl}(2)$, then $\Ue(\frak g, e)$ is isomorphic to an algebra of polynomials in one variable. The regular block of the category of finite-dimensional representations of this polynomial algebra is quite reasonable but it has no enough projective objects. Therefore this block is not equivalent to the category of finite-dimensional modules over a finite-dimensional algebra.  One can expect a similar situation if $\frak g\cong\frak{sl}(n)~(n>2)$. Nevertheless, it is plausible that the category of finite-dimensional $\Ue(\frak g, e)$-modules in this case can be described as locally nilpotent modules over a quiver with relations, see~\cite[Theorem~1.1]{GS} for an example of such a category. Perhaps, this can be done through a reduction to a similar question on a proper version of a proper Hecke algebra, see~\cite[Theorem~A]{BK}.

% Thus the simple finite-dimensional $\operatorname{U}(\frak g, e)$-modules splits on pieces an the dimensions on any piece are Goldie rank polynomials. If the dimension of at least one module in such a

% Ariki, Susumu; Iijima, Kazuto; Park, Euiyong Representation type of finite quiver Hecke algebras of type A(1)_ℓ for arbitrary parameters. Int. Math. Res. Not. IMRN 2015, no. 15, 6070–6135.
% [BK2]Brundan, Jonathan; Kleshchev, Alexander Schur-Weyl duality for higher levels. Selecta Math. (N.S.) 14 (2008), no. 1, 1–57.
%A discussion on zigzag algebras.

%It is natural to embed $A(\gamma)$ in the framework of representation theory of finite-dimensional algebras. In particular, one can try to check whether ot not this algebra is tame, and whether or not this algebra is of finite type. Also one can try to construct the Auslander-Reiten quiver of $B(\frak g)$ and describe the derived category of $B(\frak g)$. Unfortunately, we can't do much in this direction, and therefore it can be a subject of future research. It is easy to verify that $B(\frak g)$ is indecomposable and is a Frobenius algebra, and it seems that $B(\frak g)$ should be close to the class of special biserial algebras and probably to the class of skewed gentle algebras. Together with D.~Pauksztello we was able to check that $B(\frak g)$ has infinite type.

The paper is organized as follows. In Section~\ref{Snmr} we state the main results, i.e. Theorems~\ref{T1},~\ref{T1nint}, and introduce the notation which we need to do this. In Section~\ref{Suge} we recall several facts on W-algebras. In Section~\ref{Scsfd} we introduce the standard notation related to the simple Lie algebras and recall the notion of a cell in a reflection group. In Section~\ref{Sprmin} we study primitive ideals attached to the minimal nonzero nilpotent orbit of $\frak g$. In Section~\ref{Sprf} we recall the notion of a projective functor and several properties of it. In Section~\ref{Spr} we prove Theorem~\ref{T1}. In Section~\ref{Snint} we prove Theorem~\ref{T1nint}. In Appendix we write down a numerical result on maximal subalgebras which is needed in Section~\ref{Snint}.
\section{Notation and the main result}\label{Snmr}
Let $\frak g$ be a simple finite-dimensional Lie algebra over an algebraically closed field $\mathbb F$ of characteristic 0, $\Gamma$ be a Dynkin diagram of $\frak g$, and $e\in\frak g$ be an element of the minimal nonzero nilpotent orbit $O\subset\frak g$. We recall that to a pair $(\frak g, e)$ one can attach an associative algebra $\Ue(\frak g, e)$, see~\cite{Pr1} for details. For an $\mathbb F$-vector space $V$ we denote by $\dim V$ the dimension $V$. For an ideal $I$ of an algebra $A$ we denote by $\operatorname{Dim} I$ the Gelfand-Kirillov dimension of $A/I$. For a set $S$ we denote by $|S|$ the number of elements in $S$.

We denote by $\operatorname{Z}(\frak g)$ the center of $\Ue(\frak g)$. Algebra $\operatorname{Z}(\frak g)$ can be canonically identified with the center of $\Ue(\frak g, e)$, see~\cite[Corollary~5.1]{Pr1}, see~\cite[footnote~2]{Pr1} for the general case, and thus we also use notation $\operatorname{Z}(\frak g)$ to denote the center of $\Ue(\frak g, e)$. Let $m_0$ be the intersection of the augmentation ideal $(\frak g)$ of $\Ue(\frak g)$ with $\operatorname{Z}(\frak g)$. It is clear that $m_0$ is a maximal ideal of $\operatorname{Z}(\frak g)$. For any maximal ideal $m$ of $\operatorname{Z}(\frak g)$ we denote by
$$\Ue(\frak g, e)-f.d.mod^{m}$$
the category of finite-dimensional $\Ue(\frak g, e)$-modules $M$ such that $$\forall x\in M\exists d\in\mathbb Z_{>0}(m^dx=0).$$

Our main results for the simply-laced Lie algebras is as follows.
\begin{theorem}\label{T1} Assume that $\frak g$ is simply-laced and that $\frak g$ is not of type $A_n$. Then

a) $\Ue(\frak g, e)-f.d.mod^{m_0}$ contains exactly rank $\frak g$ simple objects,

b) $\Ue(\frak g, e)-f.d.mod^{m_0}$ is equivalent to the category of representations of the zigzag algebra $A(\Gamma)$, see~\cite{HK}.\end{theorem}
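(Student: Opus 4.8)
\noindent\emph{Strategy.} Write $O$ for the minimal nilpotent orbit, $m_0$ for the central character of the trivial $\mathfrak g$-module, and let $\mathbf c=\mathbf c_O\subset W$ be the two-sided cell attached to the (special) orbit $O$ under the Springer correspondence. The plan is to reduce both statements to the theory of primitive ideals and Harish--Chandra bimodules, using the ``small gap'' between $\operatorname{Prim}\Ue(\mathfrak g)$ and $\operatorname{Prim}\Ue(\mathfrak g,e)$ of \cite[Theorem~5.3]{Pr1} and the identification $\operatorname{Z}(\mathfrak g)=\operatorname{Z}(\Ue(\mathfrak g,e))$ of \cite[Corollary~5.1]{Pr1}. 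For part a), by the Skryabin equivalence together with \cite[Theorem~5.3]{Pr1} a simple object of $\Ue(\mathfrak g,e)\text{-}f.d.mod^{m_0}$ determines a primitive ideal $J\subset\Ue(\mathfrak g)$ of central character $m_0$ with $\Var(\Ue(\mathfrak g)/J)=\overline O$, and --- since the component group governing finite-dimensional $\Ue(\mathfrak g,e)$-modules is trivial for $O$ in types $D$ and $E$ (to be checked in Section~\ref{Sprmin}) --- this is a bijection onto the set of all such $J$. Primitive ideals with regular integral central character and associated variety $\overline O$ are parametrized by the left cells contained in $\mathbf c$, so a) amounts to showing that there are exactly $\operatorname{rank}\mathfrak g$ of them; a case-by-case inspection of the minimal orbit in types $D_n,E_6,E_7,E_8$ in Section~\ref{Sprmin} should show that these left cells are naturally indexed by the vertices of $\Gamma$, which also fixes the labelling $\{L_i\}_{i\in\Gamma}$ of the simples used below.

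\noindent\emph{Part b), first half.} Using projective functors (Section~\ref{Sprf}) I would realize $\mathcal B:=\Ue(\mathfrak g,e)\text{-}f.d.mod^{m_0}$ as a Serre subquotient of the principal block of the category of Harish--Chandra $\Ue(\mathfrak g)$-bimodules --- the quotient of the bimodules with associated variety $\subseteq\overline O$ by those with associated variety $\subsetneq\overline O$, taken at central character $m_0$ on both sides --- this being the bimodule incarnation of the cell filtration of category $\mathcal O$ and of the ``small gap''. Because $\mathfrak g$ is not of type $A_n$, this subquotient is a genuinely finite category with enough projectives; this is the point where the hypothesis is used, since for $\mathfrak g\cong\mathfrak{sl}(2)$ (where $\Ue(\mathfrak g,e)\cong\mathbb F[x]$) and conjecturally for $\mathfrak{sl}(n)$ an extra polynomial direction survives and $\mathcal B$ has no projective covers of its simples. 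Granting this, let $P_i$ be the projective cover of $L_i$, put $B:=\operatorname{End}_{\mathcal B}\big(\bigoplus_i P_i\big)^{\mathrm{op}}$, so that $\mathcal B\simeq B\text{-}f.d.mod$, and it remains to identify $B$ with $A(\Gamma)$.

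\noindent\emph{Part b), second half.} The anti-automorphism of $\Ue(\mathfrak g,e)$ induces a contravariant duality on $\mathcal B$ fixing the $L_i$, and the Kazhdan--Lusztig graph on the left cells inside $\mathbf c$ --- which governs $\operatorname{Ext}^1(L_i,L_j)$ --- should be computed in Section~\ref{Sprmin} to be exactly the unoriented Dynkin graph $\Gamma$; combining this with the submodule structure of the corresponding subquotient of category $\mathcal O$ yields that the Ext-quiver of $B$ is the doubled graph of $\Gamma$, that the Cartan matrix of $B$ is $2\,\mathrm{Id}+A$ where $A$ is the adjacency matrix of $\Gamma$, that $\operatorname{soc}P_i\cong\operatorname{top}P_i\cong L_i$ with semisimple multiplicity-free heart $\operatorname{rad}P_i/\operatorname{soc}P_i\cong\bigoplus_{j\sim i}L_j$, and that $P_i$ has Loewy length $3$; in particular $\dim B=2\operatorname{rank}\mathfrak g+2|E(\Gamma)|=\dim A(\Gamma)$. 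These are precisely the data that pin down the zigzag algebra (the scalars in the relations being normalized away by rescaling the arrows, compare the presentation in Theorem~\ref{Talg}), so $B\cong A(\Gamma)$ and b) follows.

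\noindent\emph{The main difficulty, and an alternative.} The two hard inputs, both to be carried out in Sections~\ref{Sprmin}--\ref{Spr}, are: (i) that $\mathcal B$ genuinely has enough projectives --- so the exclusion of $A_n$ and the contrast with the $\mathbb F[x]$-picture of \cite{Pr1} really must be used and cannot be bypassed by soft arguments --- and (ii) that the Kazhdan--Lusztig graph on the left cells of $\mathbf c$ is \emph{exactly} $\Gamma$, together with the precise submodule structure of the $P_i$, so that $B$ is the zigzag algebra and not merely some symmetric algebra with the same Cartan matrix. One could instead try a direct approach through Premet's generators and relations \cite[Theorem~1.1]{Pr1}, classifying finite-dimensional $\Ue(\mathfrak g,e)$-modules by their $\mathfrak g^\natural$-structure as in the types $C_n$ and $G_2$ of \cite[Corollary~7.1]{Pr1}; but recovering the full abelian structure --- the Ext-quiver and the relations --- appears to require the bimodule/cell input in any case.
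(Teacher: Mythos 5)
Your part a) follows the paper's route: Skryabin's equivalence plus Premet's Theorem 5.3(5) (Proposition~\ref{Pprw}) reduces the count to primitive ideals $I\supset m_0$ with $\Var(I)=\bar O$, and these are in bijection with the left cells inside the two-sided cell of $O$, which number $|\Pi|$; the paper gets this from Joseph's explicit description of that cell (Lemma~\ref{Lprr}, Proposition~\ref{Ppros}) rather than a case-by-case check, and the ``component group'' issue you flag is already absorbed into Premet's statement. For part b), however, your write-up defers exactly the two steps that constitute the proof, and the mechanism the paper uses for them is not the one you gesture at. On (i), enough projectives: the paper never passes through a Serre subquotient of Harish--Chandra bimodules. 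It translates to the walls $m_{-\omega(\alpha)}$, where by Proposition~\ref{Ppros}c) the block $\mathcal C^{-\omega(\alpha)}$ has a unique simple object $M_\alpha^-$; for the trivalent node $\alpha_0$ this simple is the one-dimensional $\Ue(\frak g,e)$-module, and its lack of self-extensions is extracted from Premet's presentation via $[\frak g_e(0),\frak g_e(0)]=\frak g_e(0)$, so that its annihilator is idempotent (Proposition~\ref{Pext}). This perfectness is precisely what fails in type $A$ --- that, rather than a ``surviving polynomial direction,'' is where the hypothesis enters. Semisimplicity then propagates to every wall by adjunction along the Dynkin diagram (Proposition~\ref{Pss}), and $P_\alpha:=\phi_\alpha M_\alpha^-$ is projective and injective because $\operatorname{Hom}(P_\alpha,-)\cong\operatorname{Hom}(M_\alpha^-,\psi_\alpha(-))$ with $\psi_\alpha$ exact and its target semisimple.

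On (ii), the zigzag relations: the Cartan data are not read off a Kazhdan--Lusztig graph but computed from the Coxeter relations satisfied by the wall-crossing operators on the Grothendieck group --- $((T_\alpha)^K-Id_0)^2=Id_0$ together with the order-two and order-three relations for pairs of simple reflections force $c_{\alpha\beta}=1$ for adjacent roots and $0$ otherwise (Lemma~\ref{Lp3}); only then do the Loewy structure of the $P_\alpha$ and a normalization of the arrows along the tree yield the presentation of Theorem~\ref{Talg}. Your intended endgame (Cartan matrix $2\,\mathrm{Id}+A$, $\operatorname{soc}P_\alpha\cong\operatorname{top}P_\alpha\cong L_\alpha$, multiplicity-free heart) is the correct target and matches what the paper proves, but as written both hard inputs remain assertions: you have correctly located the difficulties without supplying the arguments, and the arguments the paper actually supplies run through the W-algebra presentation and translation functors rather than through cell combinatorics.
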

The result of Theorem~\ref{T1} can be enhanced by the following proposition.
\begin{proposition}\label{Pt1} Assume that $\frak g$ is simply-laced and not of type $A_n$. If $\lambda$ is nonintegral then the category $\operatorname{U}(\frak g, e)-f.d.mod^{m_\lambda}$ contains no nonzero objects.
\end{proposition}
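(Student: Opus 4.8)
The plan is to convert Proposition~\ref{Pt1} into a non-existence statement for primitive ideals of $\Ue(\mathfrak g)$ and then to kill it by a dimension count on nilpotent orbits. Suppose the category $\Ue(\mathfrak g,e)-f.d.mod^{m_\lambda}$ had a nonzero object; passing to a simple subquotient we would obtain a simple finite-dimensional $\Ue(\mathfrak g,e)$-module $M$ on which $m_\lambda$ acts nilpotently. By the correspondence between finite-dimensional $\Ue(\mathfrak g,e)$-modules and primitive ideals of $\Ue(\mathfrak g)$ with associated variety $\overline O$ recalled in Section~\ref{Suge} (Skryabin's equivalence together with the theorems of Premet and Losev; see also \cite[Theorem~5.3]{Pr1}), $M$ produces a primitive ideal $I\subset\Ue(\mathfrak g)$ with $\GVar(I)=\overline O$, hence $\operatorname{Dim}I=\dim O$. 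This construction is compatible with the identification $\operatorname{Z}(\mathfrak g)=\operatorname{Z}(\Ue(\mathfrak g,e))$, so the generalized infinitesimal character imposed on $I$ by $m_\lambda$ is the honest infinitesimal character $\chi_\lambda$ (a primitive ideal has a unique one). Thus it suffices to show that, for $\mathfrak g$ simply-laced and not of type $A_n$, no primitive ideal of $\Ue(\mathfrak g)$ with a nonintegral infinitesimal character has $\operatorname{Dim}=\dim O$.

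Write $\Phi$ for the root system of $\mathfrak g$, $h$ for its Coxeter number, and $\Phi_{[\lambda]}=\{\alpha\in\Phi:\langle\lambda,\alpha^\vee\rangle\in\mathbb Z\}$ for the integral root subsystem; nonintegrality of $\lambda$ says precisely that $\Phi_{[\lambda]}$ is a \emph{proper} (closed) subsystem of $\Phi$. By the translation principle, the parametrization of primitive ideals of a fixed infinitesimal character by cells of the integral Weyl group $W_{[\lambda]}$, and the Gelfand--Kirillov dimension formula in terms of Lusztig's $a$-function (Barbasch--Vogan, Joseph, Lusztig), every primitive ideal $I$ with infinitesimal character $\chi_\lambda$ satisfies
$$\operatorname{Dim}I\;\ge\;\dim\mathfrak g-\operatorname{rk}\mathfrak g-2\,|\Phi_{[\lambda]}^{+}|\;=\;|\Phi|-|\Phi_{[\lambda]}|,$$
the right-hand side being the smallest Gelfand--Kirillov dimension that occurs (it is realized by the ideal attached to the longest element of $W_{[\lambda]}$). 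Since $\dim O=2h-2$ (a classical fact; here the Coxeter and dual Coxeter numbers coincide), the existence of such an $I$ with $\operatorname{Dim}I=\dim O$ would force $\Phi$ to possess a proper closed subsystem omitting at most $2h-2$ of its roots.

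I would rule this out by inspecting the maximal proper closed subsystems of $\Phi$: every proper closed subsystem lies in one of them, and these are the maximal Levi subsystems (delete a node of the Dynkin diagram) together with the maximal-rank subsystems coming from Borel--de Siebenthal's procedure, whose root counts can be extracted from the standard tables (and overlap with the data recorded in the Appendix). Running through $D_n$ $(n\ge 4)$, $E_6$, $E_7$, $E_8$, one checks that the largest proper closed subsystem always omits \emph{strictly} more than $2h-2$ roots --- for instance $D_{n-1}\subset D_n$ omits $4n-4>4n-6=2h-2$ roots, and $E_7\times A_1\subset E_8$ omits $112>58$ roots, with $E_6$ and $E_7$ handled similarly. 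This is in contrast with type $A_n$, where $A_{n-1}\subset A_n$ omits exactly $2n=2h-2$ roots, which is exactly why that type must be excluded. The resulting contradiction proves that $\Ue(\mathfrak g,e)-f.d.mod^{m_\lambda}$ has no nonzero object.

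The delicate point is the displayed lower bound for $\operatorname{Dim}I$ in the required generality: $\Phi_{[\lambda]}$ need not be a Levi subsystem, so Lusztig--Spaltenstein induction does not apply directly, and the statement that the minimal Gelfand--Kirillov dimension among primitive ideals of infinitesimal character $\chi_\lambda$ equals $\dim\mathfrak g-\operatorname{rk}\mathfrak g-2|\Phi_{[\lambda]}^{+}|$ must be invoked also when $\Phi_{[\lambda]}$ is a pseudo-Levi subsystem. Should a clean reference for that prove awkward, one can instead read Proposition~\ref{Pt1} off the explicit classification of primitive ideals with associated variety $\overline O$ carried out in Section~\ref{Sprmin}, observing that every member of that list has integral infinitesimal character.
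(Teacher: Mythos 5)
Your proposal is correct and follows essentially the same route as the paper: Proposition~\ref{Pprw} converts the claim into the emptiness of $\operatorname{PrId}^\lambda(O)$, the Barbasch--Vogan identity (Theorem~\ref{Tired}, which is exactly the ``$a$-function'' dimension formula you worry about referencing, valid for pseudo-Levi integral root systems) gives $\dim O\ge|\Delta|-|\Delta^{\mathbb Z}|$, and the contradiction comes from the same case-by-case comparison of $\dim O=2h-2$ with the codimensions of maximal full-rank root subsystems (the paper's Appendix, Proposition~\ref{Pmax}, applied to $(\frak g^{\mathbb Z})^\vee\subset\frak g^\vee$, which for simply-laced $\frak g$ is your list of Levi and Borel--de Siebenthal subsystems). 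The only caveat is your fallback suggestion of reading the result off Section~\ref{Sprmin}: that classification is carried out only for integral central characters, so it cannot by itself exclude nonintegral ones — but your main argument does not need it.
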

The main our result for the non-simply-laced Lie algebras is as follows.
\begin{theorem}\label{T1nint} Assume that $\frak g$ is not simply-laced and is simple. Then $\operatorname{U}(\frak g, e)-f.d.mod$ is semisimple.\end{theorem}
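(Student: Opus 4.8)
The plan is to argue one central character at a time. Since $\operatorname{Z}(\frak g)$ lies in the centre of $\Ue(\frak g,e)$ and acts locally finitely on every finite-dimensional module, each object of $\Ue(\frak g,e)-f.d.mod$ splits as the finite direct sum of its generalized $\operatorname{Z}(\frak g)$-isotypic components, so $\Ue(\frak g,e)-f.d.mod=\bigoplus_{m}\Ue(\frak g,e)-f.d.mod^{m}$ over the maximal ideals $m\subset\operatorname{Z}(\frak g)$. It therefore suffices to show that each block $\Ue(\frak g,e)-f.d.mod^{m}$ is semisimple; in particular an extension can only occur between two simple modules of the same central character.

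Next I would organize the simple objects of a block. Combining Skryabin's equivalence with Premet's analysis of finite-dimensional modules over the minimal W-algebra — in particular the ``small gap'' between the primitive spectra of $\Ue(\frak g)$ and $\Ue(\frak g,e)$ of \cite[Theorem~5.3]{Pr1} — a simple object of $\Ue(\frak g,e)-f.d.mod^{m_\lambda}$ determines a primitive ideal $J\subset\Ue(\frak g)$ of central character $\lambda$ with $\GVar(J)=\overline O$, equivalently $\operatorname{Dim}J=\dim O$, and by Section~\ref{Sprmin} these are exactly the $\operatorname{Ann}L(w\cdot\lambda)$ for $w$ running over the left cells making up the two-sided cell attached to $O$. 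Since the assignment has finite fibres and there are only finitely many primitive ideals of a given central character, each block has only finitely many simple objects and every object has finite length; hence semisimplicity of $\Ue(\frak g,e)-f.d.mod$ is equivalent to the vanishing of $\operatorname{Ext}^1_{\Ue(\frak g,e)}(L,L')$ for all simple finite-dimensional $L,L'$ of a common central character, the case $L=L'$ included. Note also that because $\frak g$ is not simply-laced (in particular not of type $A$), the reductive part $\frak g^\natural$ of $\frak g^e$ is semisimple, which through Premet's presentation \cite[Theorem~1.1]{Pr1} is what rules out the ``polynomial'' behaviour one sees for $\frak{sl}(n)$ and makes each block genuinely governed by a finite-dimensional algebra.

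The $\operatorname{Ext}^1$-vanishing is the heart of the matter. I would transport the computation, through the equivalences above, into a category of $\frak g$-modules (or Harish-Chandra $\Ue(\frak g)$-bimodules) carrying the relevant annihilators, where the calculus of projective functors of Section~\ref{Sprf} applies: a nonzero $\operatorname{Ext}^1$ between the simples attached to two left cells forces those cells to be linked by a projective (wall-crossing) functor, which is read off from the $W$-graph on the two-sided cell of $O$ and from the $\tau$-invariants of its left cells; self-extensions are excluded separately using that $\frak g^\natural$ is semisimple. The dichotomy with Theorem~\ref{T1} is exactly here: for simply-laced $\frak g$ outside type $A_n$ this linking pattern reproduces the (doubled) Dynkin diagram $\Gamma$ and hence the zigzag algebra $A(\Gamma)$, whereas the claim to be established for non-simply-laced $\frak g$ is that the left cells inside the two-sided cell of $O$ are pairwise unlinked, so each nonzero block is equivalent to the category of finite-dimensional $\mathbb F$-vector spaces and the whole category is semisimple.

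The expected main obstacle is this last combinatorial statement about the two-sided cell of $O$ — equivalently, a statement about the $\frak g^\natural$-module $\frak g(-1)$ and the relations it imposes in Premet's presentation of $\Ue(\frak g,e)$. For $\Gamma$ of types $C_n$ and $G_2$ it is already contained in \cite[Corollary~7.1]{Pr1}, so the genuinely new input is types $B_n$ and $F_4$: I would expect a uniform treatment of the $B_n$ family as a function of $n$ and a finite but delicate case-by-case check for $F_4$. The numerical data on maximal (reductive) subalgebras collected in the Appendix is precisely what is needed here — it pins down the $\frak g^\natural$-module structure entering those relations and thereby forces the putative extensions to vanish.
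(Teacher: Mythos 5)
Your opening reductions (block decomposition over maximal ideals of $\operatorname{Z}(\frak g)$, finite length, reduction to $\operatorname{Ext}^1$-vanishing between simples of a common central character, and the idea that self-extensions die because $\frak g_e(0)$ is perfect) all match the paper. But the heart of your argument rests on a false premise. You index the simples of a block by ``the left cells making up the two-sided cell attached to $O$'' and propose to show those cells are pairwise unlinked under wall-crossing. When $\frak g$ is not simply-laced the minimal orbit $O$ is \emph{not special} (this is stated after Proposition~\ref{Passv}), so no two-sided cell of $W$ maps to $O$ and there are no primitive ideals $I$ with $\operatorname{Var}(I)=\bar O$ of \emph{integral} central character at all. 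Consequently $\Ue(\frak g,e)-f.d.mod^{m_\lambda}$ is zero for every integral $\lambda$, the cell-combinatorial statement you want to prove does not exist in the form you propose, and the entire content of the theorem lives at non-integral $\lambda$ --- a regime your proposal never seriously enters (your sketch of Section~\ref{Sprmin} only covers the integral case).

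What actually closes the argument is the following chain, which is absent from your proposal. Non-speciality forces $\Delta^{\mathbb Z}\subsetneq\Delta$ for any $\lambda$ supporting a nonzero block. The Barbasch--Vogan formula (Theorem~\ref{Tired}), $\dim O=\dim\frak g-\dim\frak g^{\mathbb Z}+\dim O^{\mathbb Z}(\lambda+\rho-\rho^{\mathbb Z})$, gives $\dim O\ge\dim\frak g-\dim\frak g^{\mathbb Z}$, and comparing with the Appendix's classification of maximal-\emph{by-dimension} r-subalgebras (Proposition~\ref{Pmax}, applied to $(\frak g^{\mathbb Z})^\vee\subset\frak g^\vee$) forces equality: $(\frak g^{\mathbb Z})^\vee$ is the maximal r-subalgebra and $\dim O^{\mathbb Z}(\lambda+\rho-\rho^{\mathbb Z})=0$, i.e.\ the shifted weight is dominant and regular for $\Delta^{\mathbb Z}$. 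This is what gives a \emph{unique} simple per nonzero block (Lemma~\ref{Lmod1}, via Theorem~\ref{Tvo}), with no cell combinatorics needed. A translation-functor equivalence (Lemmas~\ref{Ldelta} and~\ref{Lmod2}) then identifies every nonzero block with the single block at $\mu_0-\rho$, where the unique simple is the one-dimensional $\Ue(\frak g,e)$-module and Proposition~\ref{Pext} kills the self-extension. So the Appendix is needed not to ``pin down the $\frak g^\natural$-module structure in Premet's relations'' as you suggest, but for the dimension count that forces uniqueness of the simple and regularity of the shifted weight; without that step your proof has no mechanism for showing the blocks are singleton, which is where the whole difficulty sits.
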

\begin{remark} Under the assumptions of Theorem~\ref{T1} the category $\Ue(\frak g, e)-f.d.mod^{m_0}$ has finitely many  isomorphism classes of its indecomposable objects which are parametrised by the roots of $\frak g$, see~\cite[Corollary~1]{HK}. In his talk on the conference ``Representation theory and symplectic singularities'' in Edinburgh T.~Arakawa provided a connection between these W-algebras and a proper class of vertex algebras. Being motivated by this connection he expressed a hope that these categories $\Ue(\frak g, e)-f.d.mod$ would be semisimple. It turns out that this is exactly the case if $\frak g$ is not simply-laced, and if $\frak g$ is simply-laced the situation is almost as good as he expected.\end{remark}
\begin{remark} We wish to mention that $\operatorname{U}(\frak g, e)-f.d.mod$ is equivalent to a subcategory of the category of $\frak g$-modules, see Section~\ref{Spr}. This allows one to apply to $\operatorname{U}(\frak g, e)-f.d.mod$ the technique of translation functors developed in~\cite{BJ}, see also~\cite{BG}. Under the assumption that $\frak g$ is simly-laced this shows that any block of $\operatorname{U}(\frak g, e)-f.d.mod$ is either semisimple with a unique simple object or is equivalent to the category of representations of the zigzag algebra $A(\Gamma)$.\end{remark}

\begin{remark}The dimensions of the simple finite-dimensional $\operatorname{U}(\frak g, e)$-modules can be computed through the Goldie ranks of primitive ideals of $\operatorname{U}(\frak g)$, see~\cite[Theorem~5.3(2)]{Pr1}. If $\frak g$ is not simply-laced then this fact leads to a very explicit answer, see~\cite[Theorem~6.2]{Pr1}.\end{remark}

To prove Theorem~\ref{T1} we use a connection between simple $\Ue(\frak g, e)$-modules and primitive ideals of $\Ue(\frak g)$, see~\cite[Conjecture~1.2.1]{Los}. Using this approach, one can classify all simple finite-dimensional $\Ue(\frak g, e)$-modules, see~\cite{LO}.

\section{Properties of $\Ue(\frak g, e)$}\label{Suge}
Let $e\in\frak g$ be a nilpotent element. For a general definition of W-algebra $\Ue(\frak g, e)$ see~\cite{Pr1}. Here we explore the features of this object which we need in this work.

\subsection{Skryabin's equivalence} To an element $e\in\frak g$ one can assign (in a noncanonical way) a Lie subalgebra $$\frak m(e)\subset(\frak  g\oplus\mathbb F)\subset\frak \Ue(\frak g)$$ such that the category $\Ue(\frak g, e)-mod$ of $\Ue(\frak g, e)$-modules is equivalent to the category $(\frak g, \frak m(e))-l.n.mod$ of $\frak g$-modules with a locally nilpotent action of $\frak m(e)$ (Skryabin's equivalence, see~\cite{Pr1} or \cite{Los} for details). We use a particular choice of $\frak m(e)$ defined by~\cite[Subsection~2.1]{Pr1}. For a $\Ue(\frak g, e)$-module $M$ we denote by $\operatorname{Skr}(M)$ the corresponding $(\frak g, \frak m(e))$-module. This immediately defines a map
$$\mathcal P: M\to\Ann_{\Ue(\frak g)}\operatorname{Skr}(M)$$
from the set of simple finite-dimensional $\Ue(\frak g, e)$-modules to the set of primitive ideals of $\Ue(\frak g)$. The following proposition describes the image of $\mathcal P$ under the assumption that $O$ is the minimal nilpotent orbit of $\frak g$ (for the general case see~\cite{LO}).
\begin{proposition}\label{Pprw} Assume that $O$ is the minimal nonzero nilpotent orbit of $\frak g$. Let $m$ be a maximal ideal of $\operatorname{Z}(\frak g)$. Then $\mathcal P$ defines a bijection between the isomorphism classes of finite-dimensional simple $\Ue(\frak g, e)$-modules which are annihilated by $m$, and the
primitive ideals $I$ of $\operatorname{U}(\frak g)$ with $I\supset m$ and $\operatorname{Var}(I) = \bar{O}$ where $\operatorname{Var}(I)$ is the associated variety of $I$ defined in~\cite[Subsection~3.2]{Pr1}, see also~Subsection~\ref{SSav}.\end{proposition}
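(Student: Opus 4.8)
The map $\mathcal P$ is by construction the composite of Skryabin's equivalence $M\mapsto\operatorname{Skr}(M)$ with the map $N\mapsto\Ann_{\Ue(\frak g)}N$, and virtually all of the statement is the specialisation to the minimal orbit of the general theory of \cite{Los} and \cite{LO}; only one point will be genuinely special to $O$ being minimal. I would split the argument into three parts: (1) $\mathcal P$ is well defined into the set of primitive ideals $I$ with $I\supset m$ and $\Var(I)=\bar O$; (2) $\mathcal P$ is surjective onto this set; (3) $\mathcal P$ is injective on isomorphism classes. For (1), if $M$ is simple then $\operatorname{Skr}(M)$ is a simple object of the category of $\frak g$-modules with locally nilpotent $\frak m(e)$-action, and any $\frak g$-submodule of $\operatorname{Skr}(M)$ is automatically such an object (the $\frak m(e)$-action on it remains locally nilpotent); hence $\operatorname{Skr}(M)$ is simple as a $\frak g$-module and $I:=\Ann_{\Ue(\frak g)}\operatorname{Skr}(M)$ is primitive. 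The inclusion $m\subset I$ is immediate from the identification of $\operatorname{Z}(\frak g)$ with the centre of $\Ue(\frak g,e)$ in \cite[Corollary~5.1]{Pr1}, since $m$ annihilates $M$ and therefore $\operatorname{Skr}(M)$. Finally $\Var(I)=\bar O$ for $M\ne 0$ finite-dimensional is a basic feature of Skryabin's construction proved in \cite{Pr1} (the inclusion $\Var(I)\subseteq\bar O$ coming from the fact that $\operatorname{Skr}(M)$ is built from the generalized Gelfand--Graev module attached to $e$, and $\Var(I)\ne\{0\}$ from $\operatorname{Skr}(M)\ne 0$, whence equality since $O$ is the minimal nonzero orbit).

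For (2), given a primitive ideal $I$ with $m\subset I$ and $\Var(I)=\bar O$, choose a simple $\frak g$-module $N$ with $\Ann_{\Ue(\frak g)}N=I$ and pass to the $\Ue(\frak g,e)$-module $N^{\frak m(e)}$ of $\frak m(e)$-invariants (the Whittaker functor applied to $N$). Two facts are required. First, $N^{\frak m(e)}$ is finite-dimensional: its dimension equals the multiplicity of $\bar O$ in the associated cycle of $N$, which is finite because $\Var(\Ann_{\Ue(\frak g)}N)\subseteq\bar O$; this is the mechanism behind \cite[Theorem~5.3(2)]{Pr1}. Second, $N^{\frak m(e)}\ne 0$: this non-vanishing of generalized Whittaker vectors holds because $O\subseteq\Var(\Ann_{\Ue(\frak g)}N)$ and is the substantive input, a special case of \cite[Conjecture~1.2.1]{Los} (proved there; see also \cite{LO}). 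Now take a simple submodule $M\subseteq N^{\frak m(e)}$; it is finite-dimensional and annihilated by $m$, and the canonical map realises $\operatorname{Skr}(M)$ as a nonzero $\frak g$-submodule of $N$, so $I=\Ann_{\Ue(\frak g)}N\subseteq\Ann_{\Ue(\frak g)}\operatorname{Skr}(M)$. By (1) the right-hand ideal is primitive, has central character $m$, and has associated variety $\bar O$; since a primitive ideal cannot be properly contained in another primitive ideal of the same central character and the same associated variety, it equals $I$, and thus $\mathcal P(M)=I$.

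For (3), suppose $\mathcal P(M_1)=\mathcal P(M_2)$, so that $\operatorname{Skr}(M_1)$ and $\operatorname{Skr}(M_2)$ are simple $\frak g$-modules with the same annihilator. In the general theory $\mathcal P$ descends to a bijection between the orbits of the component group $A(e)=Z_G(e)/Z_G(e)^{\circ}$ (for $G$ the adjoint group of $\frak g$) acting on the set of finite-dimensional simple $\Ue(\frak g,e)$-modules of central character $m$, and the primitive ideals of central character $m$ with associated variety $\bar O$; consequently $M_1$ and $M_2$ lie in one $A(e)$-orbit. It therefore remains to show that, for $O$ the minimal nilpotent orbit, $A(e)$ acts trivially on this set of modules. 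I would obtain this from Premet's presentation of $\Ue(\frak g,e)$ by generators and relations \cite[Theorem~1.1]{Pr1}, in which the induced $A(e)$-action is explicit on the generators, by verifying that it fixes every finite-dimensional simple module (equivalently, that each such module is pinned down by $m$ together with $A(e)$-invariant eigenvalue data); alternatively this can be read off from the classification of \cite{LO}. This last step --- upgrading ``injective up to $A(e)$'' to ``injective'' --- is the only place where the minimality of $O$ is used beyond Skryabin's equivalence and the non-vanishing theorem, and is where I expect the main difficulty to lie.
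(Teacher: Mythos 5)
The paper does not prove this statement at all: its entire proof is the one\--line citation ``Is equivalent to \cite[Theorem~5.3(5)]{Pr1}'', so your reconstruction from the general theory of \cite{Los}, \cite{LO} is necessarily a different route. Your part (1) is fine. The problems are in parts (2) and (3).

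In part (2) the sentence ``choose a simple $\frak g$-module $N$ with $\Ann_{\Ue(\frak g)}N=I$ and \dots\ $N^{\frak m(e)}\ne 0$ \dots\ because $O\subseteq\Var(\Ann_{\Ue(\frak g)}N)$'' is false as stated: the non-vanishing of Whittaker vectors is a property of the module, not of its annihilator. Already for $\frak g=\frak{sl}(2)$ an infinite-dimensional simple highest weight module $L(\lambda)$ has $\Var(\Ann L(\lambda))=\bar O$ but $\ker(f-1)=0$, so its space of $\frak m(e)$-invariants vanishes. What is true (and is the actual content of Losev's theorem, or of Premet's argument in the minimal case) is that there \emph{exists} some simple $N$ in the category $(\frak g,\frak m(e))$-l.n.mod with $\Ann_{\Ue(\frak g)}N=I$; but that existence is exactly the surjectivity you are trying to prove, so your argument is circular unless you route it through the two-sided ideal itself (Losev's restriction functor applied to $\Ue(\frak g)/I$, showing $I_\dagger$ is a proper ideal of finite codimension in $\Ue(\frak g,e)$) or simply cite the surjectivity statement outright. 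The subsequent maximality argument (no proper inclusions among primitive ideals with the same associated variety) is correct and does close the gap once a nonzero finite-dimensional $M$ with $I\subseteq\mathcal P(M)$ is produced. In part (3) you correctly identify that the general theory only gives injectivity up to the component group $A(e)$, which is nontrivial for the minimal orbit outside type $A$ (e.g.\ $A(e)\cong\mathbb Z/2$ for $\frak{sp}(2n)$), but you do not actually verify that it acts trivially on the isomorphism classes in question --- you only indicate where one might check it. That verification is precisely the nontrivial input supplied by \cite[Theorem~5.3(5)]{Pr1}, so as written the proposal still leans on the very result the paper cites.
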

\begin{proof}Is equivalent to~\cite[Theorem~5.3(5)]{Pr1}.\end{proof}
%{\cite[footnote 2]{Pr1}}
%\subsection{An alternative descriptions of $\Ue(\frak g, e)$.}
\subsection{Generators and relations for $\Ue(\frak g, e)$.}\label{SSger} Denote by $\frak g_e$ the centralizer of $e$ in $\frak g$. One can consider $\Ue(\frak g, e)$ as a deformation of the universal enveloping algebra $\Ue(\frak g_e)$. Using this approach one can provide $\Ue(\frak g, e)$ with a PBW-basis and evaluate the defining set of relations, see~\cite[Subsection~1.1]{Pr1}. These generators and relations are known explicitly under the assumption that $e$ belongs to the minimal nilpotent orbit $O$ as we explain next (see also~\cite[Theorem~1.1]{Pr1}).

From now on $e\in O\subset \frak g$. The associative algebra $\Ue(\frak g, e)$ is generated by the subspaces $\frak g_e(0)$ and $\frak g_e(1)$ and the central element $C$ modulo the following relations:

1) $\forall x, y\in\frak g_e(0)~([x, y]=xy-yx\in \frak g_e(0))$ and thus $\frak g_e(0)$ is a Lie algebra,

2) $\forall x\in\frak g_e(0)\forall y\in\frak g_e(1)~([x, y]=xy-yx\in \frak g_e(1))$, i.e. $\frak g_e(1)$ is a $\frak g_e(0)$-module,

3) a formula for $[x, y]~(x, y\in\frak g_e(1))$, see~\cite[Theorem~1.1]{Pr1}.\\
Using these formulas one can easily check that, if $\frak g$ is not of type A, then $\Ue(\frak g, e)$ has a unique one-dimensional module which is isomorphic to
$$\Ue(\frak g, e)/\Ue(\frak g, e)(\frak g_e(0)\oplus\frak g_e(1)),$$
see~\cite[Corollary~4.1]{Pr1}.
The following proposition is crucial for the present work.
\begin{proposition}\label{Pext}Assume that $\frak g$ is not of type $A$ and that $M$ is the one-dimensional $\Ue(\frak g, e)$-module defined above. Then $\operatorname{Ext}^1_{\Ue(\frak g, e)}(M, M)=0$.\end{proposition}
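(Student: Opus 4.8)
The plan is to reinterpret $\operatorname{Ext}^1_{\Ue(\frak g,e)}(M,M)$ as the dual of an explicit finite-dimensional space coming from the presentation of Subsection~\ref{SSger}, and then to check that this space vanishes. Write $A=\Ue(\frak g,e)$ and let $\chi\colon A\to\mathbb F$ be the algebra homomorphism with $M\cong\mathbb F_\chi$; thus $\chi(\frak g_e(0))=\chi(\frak g_e(1))=0$ and $\chi(C)=c_0$ for a scalar $c_0\in\mathbb F$. Put $\frak a=\ker\chi$, a two-sided ideal with $A=\mathbb F\cdot1\oplus\frak a$, and let $\frak a^2\subseteq\frak a$ denote the linear span of the products $ab$ with $a,b\in\frak a$. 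Applying $\operatorname{Hom}_A(-,M)$ to $0\to\frak a\to A\to M\to0$ gives $\operatorname{Ext}^1_A(M,M)\cong\operatorname{Hom}_A(\frak a,M)$: the connecting map is onto because $\operatorname{Ext}^1_A(A,M)=0$, and the restriction map $\operatorname{Hom}_A(A,M)\to\operatorname{Hom}_A(\frak a,M)$ is zero since an $f$ with $f(1)=v$ satisfies $f(x)=\chi(x)v=0$ on $\frak a$. Finally an $A$-module map $g\colon\frak a\to M$ kills $\frak a^2$ (for $a,b\in\frak a$ one has $g(ab)=\chi(a)g(b)=0$) and is then an arbitrary linear functional on $\frak a/\frak a^2$, whence $\operatorname{Ext}^1_A(M,M)\cong(\frak a/\frak a^2)^{*}$. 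It therefore suffices to prove $\frak a=\frak a^2$.

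Next I would exhibit a spanning set of $\frak a/\frak a^2$ and kill it. Since $A$ is generated by $\frak g_e(0)$, $\frak g_e(1)$ and $C$, which all lie in $\mathbb F\cdot1+W$ with $W:=\frak g_e(0)\oplus\frak g_e(1)\oplus\mathbb F(C-c_0)$, the ideal $\frak a$ is the linear span of all products of at least one element of $W$; hence $\frak a^2$ contains all products of at least two elements of $W$, and $\frak a/\frak a^2$ is spanned by the images of $\frak g_e(0)$, $\frak g_e(1)$ and $C-c_0$. Relations 1) and 2) give $[\frak g_e(0),\frak g_e(0)]\subseteq\frak a^2$ and $[\frak g_e(0),\frak g_e(1)]\subseteq\frak a^2$, because each such bracket equals $xy-yx$ with $x,y\in\frak a$. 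The hypothesis that $\frak g$ is \emph{not} of type $A$ enters precisely here: for a minimal nilpotent $e$ the algebra $\frak g_e(0)$ is the reductive subalgebra $\frak g^{\natural}$ centralizing an $\mathfrak{sl}_2$-triple through $e$, and for $\frak g$ simple and not of type $A$ one has $\frak g^{\natural}$ \emph{semisimple} while $\frak g_e(1)$ is a $\frak g^{\natural}$-module with no trivial composition factor — a fact verified case by case from the classification of minimal nilpotent orbits. Semisimplicity then yields $\frak g_e(0)=[\frak g_e(0),\frak g_e(0)]\subseteq\frak a^2$, and complete reducibility together with the absence of a trivial factor yields $\frak g_e(1)=[\frak g_e(0),\frak g_e(1)]\subseteq\frak a^2$. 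So the images of $\frak g_e(0)$ and $\frak g_e(1)$ in $\frak a/\frak a^2$ vanish.

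It remains to show $C-c_0\in\frak a^2$, for which I would use relation 3). For $x,y\in\frak g_e(1)$ the identity of~\cite[Theorem~1.1]{Pr1} expresses $[x,y]=xy-yx$ as a polynomial in bases of $\frak g_e(0)$ and $\frak g_e(1)$ and in $C$, in which $C$ occurs only linearly, say with coefficient $\kappa(x,y)\in\mathbb F$ a fixed nonzero multiple of the value on $(x,y)$ of the nondegenerate symplectic form carried by $\frak g_e(1)$. Every monomial of this polynomial that is divisible by an element of $\frak g_e(0)\oplus\frak g_e(1)$ already lies in $\frak a^2$ (the monomials linear in $\frak g_e(0)$ or in $\frak g_e(1)$ by the previous paragraph, the rest because they are products of at least two elements of $W$), so modulo $\frak a^2$ only the term $\kappa(x,y)\,C$ and the constant term survive; and since $[x,y]\in\frak a$ — its $\mathbb F\cdot1$-component being $\chi(xy-yx)=0$ — comparison of $\mathbb F\cdot1$-components forces that constant term to equal $-\kappa(x,y)c_0$, consistently with the existence of the module $M$, see~\cite[Corollary~4.1]{Pr1}. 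Hence $[x,y]\equiv\kappa(x,y)(C-c_0)\pmod{\frak a^2}$, and as $[x,y]\in\frak a^2$ this gives $\kappa(x,y)(C-c_0)\in\frak a^2$; choosing $x,y$ with $\kappa(x,y)\ne0$, possible since the form is nondegenerate and $\frak g_e(1)\ne0$ (as $\frak g$ is not of type $A_1$), we conclude $C-c_0\in\frak a^2$. Therefore $\frak a=\frak a^2$, i.e. $\operatorname{Ext}^1_{\Ue(\frak g,e)}(M,M)=0$.

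The skeleton — the isomorphism $\operatorname{Ext}^1_A(M,M)\cong(\frak a/\frak a^2)^{*}$ and the reduction of its spanning set through relations 1)--3) — is formal. The genuine inputs are the two structural facts about the minimal orbit used above, namely that $\frak g^{\natural}$ is semisimple and that $\frak g_e(1)$ has no trivial $\frak g^{\natural}$-composition factor when $\frak g$ is simple and not of type $A$. I expect verifying this dichotomy — and noting that it truly fails in type $A$, where $\frak g^{\natural}\cong\mathfrak{gl}_{\mathrm{rk}\,\frak g-1}$ has a nontrivial centre, consistently with $\Ue(\mathfrak{sl}_2,e)\cong\mathbb F[C]$ having nonzero self-extensions of its one-dimensional modules — together with the bookkeeping in relation 3) that isolates the surviving term $\kappa(x,y)(C-c_0)$, to be the points that need the most care.
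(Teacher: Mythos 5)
Your proof is correct and follows essentially the same route as the paper's: both arguments reduce the vanishing of $\operatorname{Ext}^1_{\Ue(\frak g,e)}(M,M)$ to the identity $J=J^2$ for $J=\ker\chi=\Ann_{\Ue(\frak g,e)}M$, established via $[\frak g_e(0),\frak g_e(0)]=\frak g_e(0)$ and $[\frak g_e(0),\frak g_e(1)]=\frak g_e(1)$ (the content of~\cite[Corollary~4.1]{Pr1}). Your explicit handling of the generator $C-c_0$ through relation 3) and the nondegenerate form on $\frak g_e(1)$ spells out a point the paper leaves implicit in its assertion that $\Ann_{\Ue(\frak g,e)}M$ is generated by $\frak g_e(0)$ as a two-sided ideal.
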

\begin{proof}It is enough to show that $M$ has no nontrivial self-extensions. Indeed, let$$0\to M\to \tilde{M}\to M\to0$$be a self-extension of $M$. Then $$(\Ann_{\Ue(\frak g, e)}M)^2\subset \Ann_{\Ue(\frak g, e)}\tilde{M}\subset\Ann_{\Ue(\frak g, e)}M.$$
We claim that $\Ann_{\Ue(\frak g)}\tilde{M}=\Ann_{\Ue(\frak g, e)}M$. To show this we prove that $\Ann_{\Ue(\frak g, e)}M=(\Ann_{\Ue(\frak g, e)}M)^2$.

One has that $[\frak g_e(0), \frak g_e(0)]=\frak g_e(0)$ and $[\frak g_e(0), \frak g_e(1)]=\frak g_e(1)$, see~\cite[Corollary~4.1]{Pr1}. This implies that $\Ann_{\Ue(\frak g, e)}M$ is generated by $\frak g_e(0)$ as a two-sided ideal. Using once more that $[\frak g_e(0), \frak g_e(0)]=\frak g_e(0)$ we see that $\Ann_{\Ue(\frak g, e)}M=(\Ann_{\Ue(\frak g, e)}M)^2$.

The claim implies that $\Ann_{\Ue(\frak g, e)}\tilde{M}$ is a $(\Ue(\frak g, e)/\Ann_{\Ue(\frak g, e)}M)$-module. The fact that $M$ is one-dimensional implies that $\Ue(\frak g, e)/\Ann_{\Ue(\frak g, e)}M\cong\mathbb F$. Therefore $\tilde{M}\cong M\oplus M$. Thus $\operatorname{Ext}^1_{\Ue(\frak g, e)}(M, M)=0$. 
\end{proof}
%To proceed further we need a classification of primitive ideals of $\Ue(\frak g)$.
\section{On the classification of primitive ideals of $\Ue(\frak g)$.}\label{Scsfd}
%Probably this text must be rewritten completely due to the fact that the corresponding primitive ideals are known.
We need a quite detailed description of the set of primitive ideals of $\Ue(\frak g)$ together with the respective notation.

\subsection{Notation}We assume that $\frak g$ is a simple Lie algebra. Denote by $\frak b\subset\frak g$ a Borel subalgebra of $\frak g$ and by $\frak h\subset\frak b$ a Cartan subalgebra of $\frak b$. We have $$\frak g=\oplus_{\alpha\in\frak h^*}\frak g_\alpha$$
where $\frak g_0=\frak h$, $\dim\frak g_\alpha\le 1$ if $\alpha\ne 0$, and if $\frak g_\alpha\ne 0$ then $\frak g_\alpha$ is a simple one-dimensional $\frak h$-module with character $\alpha$. We put $$\Delta:=\{0\ne\alpha\in\frak h^*\mid\frak g_\alpha\ne  0\},\hspace{10pt}\Delta^+:=\{0\ne\alpha\in\frak h^*\mid \frak g_\alpha\subset\frak b\},\hspace{10pt}\rho:=\frac12\sum_{\alpha\in\Delta^+}\alpha.$$
We denote

$\bullet$ by $(\cdot, \cdot)$ the Cartan-Kiling form of $\frak g$,

$\bullet$ by $\Pi$ the simple roots of $\Delta^+$, and by $$\{\omega(\alpha)\}_{\alpha\in\Pi}\subset\frak h^*$$ the corresponding fundamental weights,

$\bullet$ by $\Lambda$ the lattice generated by $\{\omega(\alpha)\}_{\alpha\in\Pi}$,

$\bullet$ by $\Lambda^+$ the semigroup with 0 generated by $\{\omega(\alpha)\}_{\alpha\in\Pi}$,

$\bullet$ by $W$ the subgroup generated by the reflections with respect to the elements of $\Delta$.\\
Note that $(\cdot, \cdot)$ canonically identifies $\frak g$ and $\frak g^*$ and is nondegenerate after the restriction to $\frak h$. Hence it also identifies $\frak h$ and $\frak h^*$. 

Fix $\lambda\in\Lambda$. Put $$\Delta_\lambda:=\{\alpha\in\Delta\mid \frac{2(\lambda+\rho, \alpha)}{(\alpha, \alpha)}=0\}.$$

\begin{definition}We say that $\lambda$ is {\it singular} if $\Delta_{\lambda}\ne \emptyset$, and we say that $\lambda$ is {\it regular} otherwise.\end{definition}
\begin{definition}We say that $\lambda\in\Lambda$ is {\it dominant} if $\lambda\in\Lambda^+$. We say that $\lambda$ is {\it $\rho$-dominant} if $\lambda+\rho$ is dominant.\end{definition}
\begin{definition}We say that two roots $\alpha, \beta\in\Pi$ are {\it adjacent} if $\alpha\ne\beta$ and $(\alpha, \beta)\ne 0$.\end{definition}

To any $\lambda\in\frak h^*$ we assign a one-dimensional  $\frak h$-module $\mathbb F_\lambda$ which we also consider as a $\frak b$-module ($\frak h\cong\frak b/\frak n$ where $\frak n$ is the nilpotent radical of $\frak b$). Put
$$M(\lambda):=\Ue(\frak g)\otimes_{\Ue(\frak b)}\mathbb F_\lambda,\hspace{10pt}m_\lambda:=\operatorname{Z}(\frak g)\cap\Ann_{\Ue(\frak g)}M(\lambda).$$
The ideal $m_\lambda$ is maximal in $\operatorname{Z}(\frak g)$ for all $\lambda$ and thus we have a map from $\frak h^*$ to the set of maximal ideals of $\operatorname{Z}(\frak g)$. Moreover, $$m_\lambda=m_\mu\Leftrightarrow\exists w\in W(w(\lambda+\rho)=\mu+\rho).$$

For any $\alpha\in\Delta$ we denote by $s_{\alpha}$ the corresponding reflection. For any $w\in W$ we put$$l(w):=|-\Delta^+\cap w\Delta^+|.$$

We set $L(\lambda)$ to be a unique simple quotient of $M(\lambda)$ and $I(\lambda):=\Ann_{\Ue(\frak g)}L(\lambda)$. According to Duflo's theorem, for any primitive ideal $I$ of $\Ue(\frak g)$ there exists $\lambda\in\frak h^*$ such that $I=I(\lambda)$.  Put $$\tau_L(w):=\{\alpha\in \Pi\mid  w^{-1}\alpha\in\Delta^+\},\hspace{10pt}\tau_R(w):=\{\alpha\in \Pi\mid w\alpha\in\Delta^+\}.$$
The following lemma provides a very useful invariant of primitive ideals.
\begin{lemma} Fix $w_1, w_2\in W$. If $I(w_1\rho-\rho)=I(w_2\rho-\rho)$ then $\tau_R(w_1)=\tau_R(w_2)$. Thus we can define $$\tau(I):=\tau_R(w_1)=\tau_R(w_2).$$\end{lemma}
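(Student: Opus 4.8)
The claim is that the right $\tau$-invariant $\tau_R(w)$ depends only on the primitive ideal $I(w\rho-\rho)$, and it suffices to verify this one simple root at a time: fix $\alpha\in\Pi$ and show that whether $\alpha\in\tau_R(w)$ is determined by $I(w\rho-\rho)$. Write $w\cdot\lambda:=w(\lambda+\rho)-\rho$, so that $L(w\rho-\rho)=L(w\cdot 0)$ is the simple object of the principal block $\mathcal O_0$ of category $\mathcal O$ with annihilator $I(w\rho-\rho)$. Pick a $\rho$-dominant $\mu_\alpha\in\Lambda$ with $\Delta_{\mu_\alpha}=\{\alpha,-\alpha\}$ (so $\mu_\alpha$ sits on the $\alpha$-wall and is otherwise regular) and let $\theta_\alpha:=T^{0}_{\mu_\alpha}\circ T^{\mu_\alpha}_{0}$ be the associated wall-crossing functor on $\mathcal O_0$, the composite of translation onto the $\alpha$-wall and back. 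The standard computation of translation functors on simple highest weight modules (see, e.g.,~\cite{BJ}) gives
$$\theta_\alpha L(w\cdot 0)=0\quad\Longleftrightarrow\quad l(ws_\alpha)>l(w)\quad\Longleftrightarrow\quad w\alpha\in\Delta^+\quad\Longleftrightarrow\quad \alpha\in\tau_R(w);$$
the direction of the first equivalence is pinned down by the extreme cases $w=1$ (apply $\theta_\alpha$ to the BGG resolution of the trivial module: the Verma characters cancel in pairs, so $\theta_\alpha$ kills it) and $w=w_0$, the longest element of $W$ (here $L(w_0\cdot 0)=M(w_0\rho-\rho)$ is the antidominant Verma, which $\theta_\alpha$ does not annihilate). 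After this reduction it remains to prove: if $\Ann_{\Ue(\frak g)}L(w_1\cdot 0)=\Ann_{\Ue(\frak g)}L(w_2\cdot 0)$, then $\theta_\alpha L(w_1\cdot 0)=0$ if and only if $\theta_\alpha L(w_2\cdot 0)=0$.

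The crux, which I expect to be the real obstacle, is that translation functors are compatible with the passage to two-sided annihilators: for every $M$ in $\mathcal O_0$ the ideal $\Ann_{\Ue(\frak g)}(\theta_\alpha M)$ depends only on $\alpha$ and on $I:=\Ann_{\Ue(\frak g)}(M)$. This is precisely the Borho--Jantzen analysis of the action of translation functors on the primitive spectrum~\cite{BJ}: realizing $\theta_\alpha$ as $M\mapsto B_\alpha\otimes_{\Ue(\frak g)}M$ for a suitable Harish-Chandra bimodule $B_\alpha$, one has $\theta_\alpha M\cong(B_\alpha/B_\alpha I)\otimes_{\Ue(\frak g)/I}M$, and for $M$ a faithful $\Ue(\frak g)/I$-module (in particular for $M$ simple with $\Ann M=I$) the annihilator of the left-hand side in $\Ue(\frak g)$ is already determined by $B_\alpha$ and $I$, being the left annihilator of $B_\alpha/B_\alpha I$. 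Granting this, put $I:=I(w_1\rho-\rho)=I(w_2\rho-\rho)$; then $\Ann_{\Ue(\frak g)}(\theta_\alpha L(w_1\cdot 0))=\Ann_{\Ue(\frak g)}(\theta_\alpha L(w_2\cdot 0))$, so one of these modules is zero exactly when the other is, i.e. $\alpha\in\tau_R(w_1)\Leftrightarrow\alpha\in\tau_R(w_2)$ by the displayed equivalence. Letting $\alpha$ range over $\Pi$ gives $\tau_R(w_1)=\tau_R(w_2)$, and $\tau(I):=\tau_R(w_1)=\tau_R(w_2)$ is well defined.

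I would take the translation-functor argument as the main line; note that only the easy implication is used — equal annihilators force equal $\tau_R$ — and no converse is needed. An alternative, once the theory of cells in a reflection group recalled in this section is available, is to invoke the theorem of Joseph and Vogan identifying the fibre $\{w:I(w\rho-\rho)=I\}$ with a single Kazhdan--Lusztig left cell, together with the elementary fact that the right descent set (hence $\tau_R$) is constant along such a cell; then the statement is immediate. Either way the only non-formal ingredient is the interaction between translation functors (or cells) and the primitive spectrum of $\Ue(\frak g)$, which is classical.
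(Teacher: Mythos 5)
Your proof is correct and is essentially the same argument the paper relies on: the paper's proof of this lemma consists only of the citations to \cite[2.14]{BJ} and \cite[Theorem~2.4]{V}, and what you have written is precisely the mechanism behind those references — the wall-crossing characterization $\alpha\in\tau_R(w)\Leftrightarrow\theta_\alpha L(w\rho-\rho)=0$ (your sign conventions check out against the paper's normalization of $\tau_R$) combined with the Borho--Jantzen/Vogan fact that $\Ann_{\Ue(\frak g)}F(M)$ depends only on $\Ann_{\Ue(\frak g)}M$ for a projective functor $F$, which is exactly Lemma~\ref{Ltra} later in the paper. Nothing to correct.
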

\begin{proof}See~\cite[Subsection~2.14]{BJ}, see also~\cite[Theorem~2.4]{V} and the text above it.\end{proof}
\subsection{Associated varieties of ideals}\label{SSav}
The universal enveloping algebra $\Ue(\frak g)$ of $\frak g$ has natural degree filtration $\{U_i\}_{i\ge0}$. The associated graded algebra $$\operatorname{gr}\Ue(\frak g):=\oplus_{i\ge 0}(U_i/U_{i-1})$$ is canonically isomorphic to the symmetric algebra $\Sa^\cdot(\frak g)$ of $\frak g$. For a two-sided ideal $I$ we put $$\operatorname{gr} I:=\oplus_{i\ge0}(I\cap U_i/I\cap I_{i-1})\subset\operatorname{gr}\Ue(\frak g)\cong \Sa^\cdot(\frak g).$$We put $\Var(I):=\{x\in\frak g^*\mid \forall f\in \gr I~(f(x)=0)\}$. It is known that if $I$ is a primitive ideal of $\Ue(\frak g)$ then $\Var(I)$ is the closure $\overline{O(I)}$ of a nilpotent coadjoint orbit $O(I)$ in $\frak g^*$, see~\cite{JoA}. We put $O(\lambda):=O(I(\lambda))$.

\subsection{One-sided and two-sided cells}\label{SScell}  Next, we need some combinatorial data attached to the reflection group $W$. It has some definition through the Kazhdan-Lusztig polynomials, see, for example,~\cite[Section~6]{LO}, and it has a much more explicit description for all classical Lie algebras, see~\cite{BV}. But we believe that the approach which we present here also makes sense, cf. with~\cite{BB}. We introduce two relations on $W$:

1) $w_1\sim_L w_2\Leftrightarrow I(w_1\rho-\rho)=I(w_2\rho-\rho)~(w_1, w_2\in W)$,

2)  $w_1\sim_R w_2\Leftrightarrow I(w_1^{-1}\rho-\rho)=I(w_2^{-1}\rho-\rho)~(w_1, w_2\in W)$.\\
Clearly, $\sim_L$ and $\sim_R$ are equivalence relations on $W$, and we denote by $\sim$ the smallest equivalence relation on $W$ which includes both $\sim_L, \sim_R$. We denote by

$\bullet$ $\operatorname{LCell}(w, \Pi)$ the equivalence class of $\sim_L$ which contains $w$,

$\bullet$ $\operatorname{RCell}(w, \Pi)$ the equivalence class of $\sim_R$ which contains $w$,

$\bullet$ $\operatorname{TCell}(w, \Pi)$ the equivalence classes of $\sim$ which contains $w$.\\
It is clear that the set of left cells can be naturally identified with the set of primitive ideal of the form $I(w\rho-\rho)~(w\in W)$. The following proposition gives a straightforward connection between the associated varieties of ideals and the two-sided cells.
\begin{proposition}[{~\cite[Subsection~6.2]{LO}}]\label{Passv} Two-sided ideals $I(w_1\rho-\rho)$ and $I(w_2\rho-\rho)$ have the same associated variety if and only if $w_1\sim w_2$.\end{proposition}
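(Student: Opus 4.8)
The plan is to deduce the proposition from the standard dictionary between two-sided cells of $W$ at the regular integral infinitesimal character and nilpotent orbits in $\frak g$. Note first that every ideal $I(w\rho-\rho)$, $w\in W$, has the infinitesimal character of $\rho$, which is regular and integral, so the whole statement takes place inside the principal block, where the cell theory is in its cleanest form. The only external input on the associative side that I would use is Joseph's theorem recalled in Subsection~\ref{SSav}: $\Var(I(\lambda))=\overline{O(\lambda)}$ for a single nilpotent orbit $O(\lambda)$.

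For the ``if'' direction I would reduce to the two generating relations of $\sim$. If $w_1\sim_L w_2$, then $I(w_1\rho-\rho)=I(w_2\rho-\rho)$ by definition, so the associated varieties are literally equal. If $w_1\sim_R w_2$, i.e. $I(w_1^{-1}\rho-\rho)=I(w_2^{-1}\rho-\rho)$, I would use the symmetry $\Var(I(w\rho-\rho))=\Var(I(w^{-1}\rho-\rho))$ --- which is a part of Joseph's analysis of Goldie rank polynomials, and also reflects the fact that $w$ and $w^{-1}$ always lie in one two-sided cell --- to obtain the chain $\Var(I(w_1\rho-\rho))=\Var(I(w_1^{-1}\rho-\rho))=\Var(I(w_2^{-1}\rho-\rho))=\Var(I(w_2\rho-\rho))$. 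A general pair $w_1\sim w_2$ is joined by a finite sequence of $\sim_L$- and $\sim_R$-steps, so transitivity finishes this direction.

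For the ``only if'' direction, the ``if'' direction already shows that $w\mapsto O(w\rho-\rho)$ factors through the set of two-sided cells, hence defines a map from two-sided cells to nilpotent orbits; what remains is its injectivity. This is exactly the Barbasch--Vogan parametrization of the two-sided cells at the regular integral infinitesimal character: the primitive ideals of $\Ue(\frak g)$ with this infinitesimal character and a prescribed associated variety $\overline O$ form precisely one two-sided cell (and there are such ideals if and only if $O$ is special). I would invoke this, together with~\cite[Subsection~6.2]{LO} and, for the classical types, the explicit combinatorial description in~\cite{BV}, rather than reprove it.

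The main obstacle is precisely this last injectivity statement --- separating two-sided cells by their associated varieties. It is the genuinely deep ingredient, resting on the Springer correspondence and Lusztig's classification of the special representations of $W$, and I would not attempt to reprove it here. Everything else --- the reduction to the generators of $\sim$, the triviality of the $\sim_L$-step, the inversion symmetry of the associated variety, and the chaining argument --- is short and formal once Joseph's orbit theorem is available.
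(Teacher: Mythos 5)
The paper gives no internal proof of Proposition~\ref{Passv}: it is stated as a citation of \cite[Subsection~6.2]{LO}, so in substance your outline and the paper coincide --- both ultimately rest on the Barbasch--Vogan/Lusztig result that, at the regular integral infinitesimal character, the two-sided cells are separated by (and in bijection with the special members of) the nilpotent orbits. Your reduction of the ``if'' direction to the generators of $\sim$ and your isolation of the injectivity statement as the one genuinely deep ingredient are both correct. One caution: the inversion symmetry $\Var(I(w\rho-\rho))=\Var(I(w^{-1}\rho-\rho))$ that you use for the $\sim_R$-step is itself a nontrivial theorem of Joseph and must be taken from his Goldie-rank/associated-variety analysis (cf.~\cite{JoA}); your parenthetical alternative justification --- that it ``reflects the fact that $w$ and $w^{-1}$ lie in one two-sided cell'' --- is circular at this point in the argument, since constancy of $\Var$ on two-sided cells is exactly what the ``if'' direction is establishing. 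With the symmetry credited to Joseph rather than to the cell structure, the outline is a faithful unpacking of the cited result.
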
Proposition~\ref{Passv} defines a map from the set of two-sided cells of $W$ to the set of nilpotent orbits. An orbit which belongs to the image of this map is called {\it special}. The minimal nilpotent orbit $O$ is special if and only if $\frak g$ is simply-laced, see~\cite{CM}.
\section{Primitive ideals $I$ for which $\operatorname{Var}(I)=\bar{O}$.}\label{Sprmin} Let $\lambda\in\Lambda$ be a weight. Denote by $\operatorname{PrId}^\lambda(O)$ the set of primitive ideals $I$ of $\Ue(\frak g)$ such that $\operatorname{Var}(I)=\bar{O}$ and $m_\lambda\subset I$.
\begin{lemma}\label{Lprr}We  have $|\operatorname{PrId}^0(O)|=|\Pi|$.\end{lemma}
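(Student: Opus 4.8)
The plan is to count the primitive ideals $I$ of $\Ue(\frak g)$ with $\Var(I)=\bar O$ and $m_0\subset I$, i.e. lying in the principal block. By Duflo's theorem every such ideal is of the form $I(w\rho-\rho)$, so equivalently we count left cells $\operatorname{LCell}(w,\Pi)$ that are contained in the (unique, by Proposition~\ref{Passv} and the fact that $O$ is special when $\frak g$ is simply-laced — though here we want this statement for all $\frak g$) two-sided cell attached to $O$. First I would identify which two-sided cell this is: for the minimal nonzero nilpotent orbit $O$, the corresponding special orbit is the one whose two-sided cell is the ``subregular-from-below'' cell, namely the cell consisting of the nonidentity elements $w$ for which $l(w)\le$ something small; concretely, the relevant two-sided cell is the one just above the trivial cell $\{e\}$, and its elements are exactly the reflections $s_\alpha$ together with a few short products. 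The key structural input is that a primitive ideal $I(w\rho-\rho)$ has $\Var(I)=\bar O$ precisely when $\dim O(I)$ equals $\dim O$, and $\dim O$ is minimal among nonzero nilpotent orbits, so $O(I)$ must be this minimal orbit; this pins the two-sided cell.

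Next I would enumerate the left cells inside this two-sided cell. The cleanest route is via the $\tau$-invariant: the lemma above tells us $\tau(I)=\tau_R(w)$ is well-defined on left cells, and for this particular two-sided cell I claim $\tau$ is a complete invariant of left cells, with every single simple root $\alpha\in\Pi$ occurring as $\tau(I)=\{\alpha\}$ for exactly one left cell. To see this, for each simple root $\alpha$ consider $w=s_\alpha$: then $s_\alpha\rho-\rho=-\alpha$, and $L(-\alpha)$ is the simple module with highest weight $-\alpha$; one computes $\Var(I(-\alpha))$ is the closure of the minimal orbit (this is the classical computation that $\operatorname{Ann} L(-\alpha)$ is the Joseph ideal-type object / the minimal-orbit primitive ideal, at least for the principal block — references such as Joseph's work on the minimal orbit apply), and $\tau_R(s_\alpha)=\tau_L(s_\alpha)=\Pi\setminus\{\alpha\}$ or $\{\alpha\}$ depending on convention, in any case a singleton determined by $\alpha$. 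Conversely I would argue any $w$ with $I(w\rho-\rho)$ in this cell has $|\tau_R(w)|$ forced to be small (the ``more singular'' directions), and that distinct simple roots give distinct ideals, by a standard argument with wall-crossing / translation functors or directly from the classification in \cite{BV} for classical types and case-check for exceptionals. This yields a bijection between $\operatorname{PrId}^0(O)$ and $\Pi$, hence $|\operatorname{PrId}^0(O)|=|\Pi|=\operatorname{rank}\frak g$.

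The main obstacle I anticipate is establishing that $\tau$ separates the left cells in this two-sided cell and that each singleton $\{\alpha\}$ is realized exactly once — i.e. that there are no ``extra'' left cells in the minimal-orbit two-sided cell beyond the $|\Pi|$ obvious ones. In type $A_n$ this fails in the naive sense (which is why the theorem excludes it), so the argument must genuinely use that $\frak g$ is not of type $A$, presumably through the fact (cited earlier as \cite[Corollary~4.1]{Pr1}) that $\Ue(\frak g,e)$ then has a unique one-dimensional module, equivalently that the minimal-orbit primitive ideals in the principal block are ``rigid''. I would handle this by either (i) invoking the explicit cell combinatorics of \cite{BV} for $B_n,C_n,D_n$ and \cite{HK}/tables for $E_6,E_7,E_8,F_4,G_2$, or (ii) giving a uniform argument: the Goldie rank / $\tau$-invariant data of the minimal-orbit cell is governed by the action of $W$ on the span of the roots, and the number of left cells equals the number of irreducible constituents of the corresponding Springer/cell representation, which for the minimal orbit is the reflection representation (dimension $|\Pi|$) plus possibly the trivial — the trivial being excluded precisely because it corresponds to the zero orbit. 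The remaining routine verifications — that $m_0\subset I(-\alpha)$ (clear, since $-\alpha$ and $0$ are $W$-linked), and that $\Var(I(-\alpha))=\bar O$ — I would state with references rather than recompute.
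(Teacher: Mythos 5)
Your overall route is the same as the paper's: identify $\operatorname{PrId}^0(O)$ with the set of left cells contained in the two-sided cell $\operatorname{TCell}(O)$ attached to $O$ via Proposition~\ref{Passv}, and then count those left cells. The paper disposes of the count in one line by citing~\cite[Subsection~4.3]{JoM}, which states that $\operatorname{TCell}(O)$ splits into exactly $|\Pi|$ left cells; its explicit representatives are the path elements $w_\alpha=s_{\alpha_{n(\alpha)}}\cdots s_{\alpha_0}$ rather than your simple reflections $s_\alpha$, but both families lie in the relevant ($a=1$) cell, and your lower bound $|\operatorname{PrId}^0(O)|\ge|\Pi|$ via the pairwise distinct $\tau$-invariants $\tau_R(s_\alpha)=\Pi\setminus\{\alpha\}$ is sound. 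The point to fix is your upper bound. Your option (i) --- citing the explicit cell combinatorics --- is in substance what the paper does and is fine. But your ``uniform argument'' (ii) is incorrect as stated: the number of left cells in a two-sided cell is \emph{not} the number of irreducible constituents of the cell or Springer representation (for this cell that count would be $1$ or $2$, not $|\Pi|$). What actually produces $|\Pi|$ is the \emph{dimension} of the special representation of the family, namely the reflection representation: each left cell module contains it exactly once, so the number of left cells equals $\dim\frak h^*=|\Pi|$. If you want something self-contained rather than a citation, the cleanest description is that the $a=1$ two-sided cell consists of the elements $w\ne e$ with a unique reduced expression, i.e. directed paths in the Dynkin diagram, with left cells indexed by the terminal vertex --- but at that point you are reproving the result of~\cite{JoM} that the paper simply quotes.
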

\begin{proof} Denote by $\operatorname{TCell}(O)$ the two-sided cell in $W$ attached to $O$ through Proposition~\ref{Passv}. The desired number of ideals equals to the number of left cells in $\operatorname{TCell}(O)$, see~Subsection~\ref{SScell}. According to~\cite[Subsection 4.3]{JoM}, $\operatorname{TCell}(O)$ splits into exactly $|\Pi|$ left cells. \end{proof}
\begin{remark}One can deduce Lemma~\ref{Lprr} from~\cite{Dou}.\end{remark}
To work out the singular integral case we need the following proposition.
\begin{proposition}[{\cite[Satz~2.14]{BJ}}]\label{Pps}Fix $\lambda\in\Lambda$. Then the following sets can be identified

1) primitive ideals $I$ of $\Ue(\frak g)$ such that $I\supset m_\lambda$,

2) primitive ideals $I$ of $\Ue(\frak g)$ such that $I\supset m_0$ and $\tau(I)\cap \Delta_\lambda=\emptyset$.\\
The associated varieties of the ideals identical under this correspondence are the same.
\end{proposition}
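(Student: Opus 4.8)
This is Jantzen's translation principle in the form established for primitive ideals in~\cite{BJ}; my plan is to reconstruct it with translation functors. Throughout write $w\cdot\mu:=w(\mu+\rho)-\rho$, so that $I(w\cdot 0)=I(w\rho-\rho)$. First I would arrange that $\lambda$ is $\rho$-dominant (this is the situation of~\cite[Satz~2.14]{BJ} and the one used later in the paper); then, putting $\Pi_\lambda:=\Pi\cap\Delta_\lambda$, the dominance of $\lambda+\rho$ makes $\Delta_\lambda$ the \emph{standard} parabolic subsystem with simple roots $\Pi_\lambda$ and $W_\lambda:=\{w\in W\mid w(\lambda+\rho)=\lambda+\rho\}$ the corresponding standard parabolic subgroup. (For $\beta\in\Delta^+$ one has $(\lambda+\rho,\beta)=0$ iff every simple root occurring in $\beta$ is orthogonal to $\lambda+\rho$, whence $\Delta_\lambda^+=\Delta^+\cap\mathbb Z\Pi_\lambda$ and $\Pi\cap\Delta_\lambda=\Pi_\lambda$.) Then the condition in item~2), namely $\tau(I)\cap\Delta_\lambda=\emptyset$, becomes $\tau(I)\cap\Pi_\lambda=\emptyset$; and for $w\in W$ one has $\tau_R(w)\cap\Pi_\lambda=\emptyset$ if and only if $w$ is the longest element of the coset $wW_\lambda$ (because for simple $\alpha$ the descent $l(ws_\alpha)<l(w)$ means $w\alpha\notin\Delta^+$, i.e.\ $\alpha\notin\tau_R(w)$). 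Since $\tau_R(w)$ is constant on left cells — the lemma on the $\tau_R$-invariant in Section~\ref{Scsfd} — the primitive ideals of item~2) are exactly the $I(w\cdot 0)$ for which every element of $\operatorname{LCell}(w,\Pi)$ is longest in its $W_\lambda$-coset.

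Next I would use the translation functors $T:=T^\lambda_0$ and $S:=T^0_\lambda$ between the blocks $\mathcal O^{m_0}$ and $\mathcal O^{m_\lambda}$ of category $\mathcal O$ together with the standard facts: $T,S$ are exact; $TL(w\cdot 0)=L(w\cdot\lambda)$ when $w$ is longest in $wW_\lambda$ and $TL(w\cdot 0)=0$ otherwise; and, dually, $\Ann_{\Ue(\frak g)}SL(w\cdot\lambda)=I(w\cdot 0)$ when $w$ is longest in $wW_\lambda$ (translation out of the wall carries the annihilator $I(w\cdot\lambda)$ to $I(w\cdot 0)$). By Bernstein--Gelfand's tensor-product theorem~\cite{BG}, $\Ann_{\Ue(\frak g)}(V\otimes N)$ — and hence $\Ann_{\Ue(\frak g)}$ of each block component of $V\otimes N$ — depends only on $\Ann_{\Ue(\frak g)}N$ and the finite-dimensional module $V$. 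Consequently the assignments
$$\Phi\colon I(w\cdot 0)\longmapsto\Ann\bigl(TL(w\cdot 0)\bigr)=I(w\cdot\lambda),\qquad\Psi\colon I(w\cdot\lambda)\longmapsto\Ann\bigl(SL(w\cdot\lambda)\bigr)=I(w\cdot 0)$$
are well defined (independent of the chosen longest coset representative of a left cell), and the displayed equalities show they are mutually inverse; so $\Phi$ is a bijection from item~2) onto its image. That image is all of item~1) by Duflo's theorem, which forces every $I\supset m_\lambda$ to equal $\Ann L(w\cdot\lambda)=I(w\cdot\lambda)$ for some $w$, necessarily longest in $wW_\lambda$ since $L(w\cdot\lambda)$ depends only on $wW_\lambda$. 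Finally $\Phi,\Psi$ respect inclusions, again by~\cite{BG} ($\Ann(V\otimes M)\subseteq\Ann(V\otimes N)$ whenever $\Ann M\subseteq\Ann N$), so the bijection is order-preserving.

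For the assertion about associated varieties I would invoke the same circle of ideas: tensoring by a finite-dimensional module changes neither the Gelfand--Kirillov dimension nor the associated (characteristic) variety of the annihilator, so $\Var(\Phi(I))=\Var\bigl(\Ann(TL(w\cdot 0))\bigr)=\Var\bigl(\Ann L(w\cdot 0)\bigr)=\Var(I)$. Equivalently one can run this through two-sided cells and Proposition~\ref{Passv}: passage to the $\lambda$-wall neither merges nor splits the two-sided cells meeting $\{w\in W:\tau_R(w)\cap\Pi_\lambda=\emptyset\}$, so the nilpotent orbit attached to such a cell, hence the associated variety, is preserved.

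The step I expect to be the main obstacle is the ``out of the wall'' input $\Ann_{\Ue(\frak g)}SL(w\cdot\lambda)=I(w\cdot 0)$ (together with $TL(w\cdot 0)=L(w\cdot\lambda)$ or $0$): this is where the regularity of $\lambda+\rho$ off $\Delta_\lambda$ is genuinely exploited, and it is the technical heart of~\cite[Satz~2.14]{BJ}. The Bernstein--Gelfand tensor-product theorem and Duflo's theorem are used as black boxes; granting them and the combinatorics of $W_\lambda$-cosets, the rest is formal.
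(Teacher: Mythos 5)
The paper gives no internal proof of this proposition: it is imported verbatim from~\cite[Satz~2.14]{BJ}, so your reconstruction can only be measured against the classical translation-principle argument. Much of your skeleton is indeed that argument and is sound: the reduction to $\rho$-dominant $\lambda$, the identification of condition~2) with ``$w$ is the longest element of $wW_\lambda$'' (which does match the paper's convention $\tau_R(w)=\{\alpha\in\Pi\mid w\alpha\in\Delta^+\}$), the fact that $TL(w\cdot 0)=L(w\cdot\lambda)$ for such $w$ and $0$ otherwise, Duflo's theorem for surjectivity, and the well-definedness of $I(w\cdot 0)\mapsto\Ann\bigl(TL(w\cdot 0)\bigr)$ — which is not quite the Bernstein--Gelfand theorem alone but exactly Vogan's lemma, i.e.\ the paper's Lemma~\ref{Ltra}.

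However, the step you yourself flag as the technical heart is not merely left open: as you state it, it is false. In general $\Ann_{\Ue(\frak g)}\bigl(SL(w\cdot\lambda)\bigr)\ne I(w\cdot 0)$. Take $\frak g=\frak{sl}(2)$, $\lambda=-\rho$, $w=s_\alpha$. Then $L(w\cdot\lambda)=L(-\rho)=M(-\rho)$ is projective in its (semisimple) singular block, so $SL(w\cdot\lambda)$ is the big projective $P(-2\rho)$ of the regular block, on which the Casimir $C$ acts by a nonzero nilpotent operator (the center acts on this block through the coinvariant algebra $\mathbb F[C]/(C^2)$). Thus $C\in m_0\subset\Ue(\frak g)m_0=I(s_\alpha\rho-\rho)=I(w\cdot 0)$, while $C\notin\Ann P(-2\rho)$; so $\Ann\bigl(SL(w\cdot\lambda)\bigr)\subsetneq I(w\cdot 0)$, and only the inclusion $\Ann\bigl(SL(w\cdot\lambda)\bigr)\subseteq I(y\cdot 0)$ for each composition factor $L(y\cdot 0)$ is automatic. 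Consequently your $\Phi$ and $\Psi$ are not mutually inverse as defined, and the injectivity of $I(w\cdot 0)\mapsto I(w\cdot\lambda)$ on longest coset representatives — the genuine content of Satz~2.14 — is left unproved. The repair is not cosmetic: one must either argue as Borho--Jantzen do (comparing the fibres of $w\mapsto I(w\cdot 0)$ and $w\mapsto I(w\cdot\lambda)$ via Jantzen's translation principle and the order/additivity results of their \S2), or use Vogan's $\tau$-invariant calculus, or replace $\Psi$ by something like ``annihilator of the simple head of $SL(w\cdot\lambda)$'', whose well-definedness as a function of the ideal $I(w\cdot\lambda)$ alone is precisely what must be proved and is not supplied by the tensor-product theorem. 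The final assertion on associated varieties is standard once the bijection is in place (Joseph's irreducibility of $\Var$ plus invariance of Gelfand--Kirillov dimension under $\cdot\otimes V$), and your sketch of that part is fine.
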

Using this proposition we can evaluate the desired classification of the singular cases which we need.
\begin{proposition}\label{Ppros} a) For $I\in\operatorname{PrId}^0(O)$ there exists $\alpha\in\Pi$ such that $\tau(I)=\Pi\backslash\alpha$.

b) For any $\alpha\in\Pi$ there exists and unique $I\in\operatorname{PrId}^0(O)$ such that $\tau(I)=\Pi\backslash\alpha$.

c) For any $\alpha\in\Pi$ there exists exactly one primitive ideal $I$ such that $I\supset m_{-\omega(\alpha)}$.
\end{proposition}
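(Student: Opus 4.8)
The plan is to reduce the whole proposition to one statement: that $\alpha\mapsto I(s_\alpha\rho-\rho)$ is a bijection from $\Pi$ onto $\operatorname{PrId}^0(O)$. The one point requiring real work is to verify that $I(s_\alpha\rho-\rho)$ actually lies in $\operatorname{PrId}^0(O)$; this is the step I expect to be the main obstacle, and it is exactly where the simply-laced hypothesis is used (outside that case $\operatorname{PrId}^0(O)$ is empty and the proposition is vacuous). Since $s_\alpha\rho$ is $W$-conjugate to $\rho$ one has $m_{s_\alpha\rho-\rho}=m_0$, so $m_0\subset I(s_\alpha\rho-\rho)$, and what remains is $\operatorname{Var}(I(s_\alpha\rho-\rho))=\bar O$. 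Because $\frak g$ is simply-laced, $O$ is special (see~\cite{CM}), and $s_\alpha$, having length one, lies in the two-sided cell $\operatorname{TCell}(O)$ attached to $O$ (this is part of the explicit description of $\operatorname{TCell}(O)$ from~\cite{JoM} already used for Lemma~\ref{Lprr}); hence $\operatorname{Var}(I(s_\alpha\rho-\rho))=\bar O$ by Proposition~\ref{Passv}. If one prefers to avoid that reference here, one can argue instead that $s_\alpha\rho-\rho=-\alpha$ is not dominant, so $L(-\alpha)$ is infinite-dimensional and the associated variety is the closure of a nonzero orbit, while the two-sided cell of $s_\alpha$ is the minimal one above $\{e\}$, so Proposition~\ref{Passv} forces its orbit to be the unique minimal nonzero orbit $O$.

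Granting this, the rest is bookkeeping. From $s_\alpha(\alpha)=-\alpha$ and the fact that $s_\alpha$ permutes $\Delta^+\setminus\{\alpha\}$ one gets $\tau(I(s_\alpha\rho-\rho))=\tau_R(s_\alpha)=\{\gamma\in\Pi\mid s_\alpha\gamma\in\Delta^+\}=\Pi\setminus\{\alpha\}$; since $\tau$ is a well-defined invariant of primitive ideals, distinct $\alpha$ give distinct ideals, so $\alpha\mapsto I(s_\alpha\rho-\rho)$ is injective, and by Lemma~\ref{Lprr} its source and target both have $|\Pi|$ elements, hence it is a bijection. Parts (a) and (b) are then immediate: every $I\in\operatorname{PrId}^0(O)$ equals $I(s_\alpha\rho-\rho)$ for a unique $\alpha\in\Pi$, whence $\tau(I)=\Pi\setminus\{\alpha\}$, and each subset $\Pi\setminus\{\alpha\}$ occurs exactly once. (That $\tau(I)\neq\Pi$ for such $I$ also drops out directly: $\tau_R(w)=\Pi$ forces $w(\Delta^+)=\Delta^+$, i.e.\ $w=e$, and $I(0)=(\frak g)$ has associated variety $\{0\}\neq\bar O$.)

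For part (c), set $\lambda=-\omega(\alpha)$. Writing $\rho=\sum_{\beta\in\Pi}\omega(\beta)$ and $\gamma=\sum_\delta c_\delta\alpha_\delta$ for a root $\gamma$, one has $(\lambda+\rho,\gamma)=\sum_{\delta\neq\alpha}c_\delta(\alpha_\delta,\alpha_\delta)/2$, which vanishes exactly when $\gamma\in\{\alpha,-\alpha\}$; hence $\Delta_{-\omega(\alpha)}=\{\alpha,-\alpha\}$. Proposition~\ref{Pps} then identifies the primitive ideals $I$ with $m_{-\omega(\alpha)}\subset I$ and $\operatorname{Var}(I)=\bar O$ with the primitive ideals $I'$ such that $m_0\subset I'$, $\operatorname{Var}(I')=\bar O$ and $\tau(I')\cap\{\alpha,-\alpha\}=\emptyset$, i.e.\ with $\{I'\in\operatorname{PrId}^0(O)\mid\alpha\notin\tau(I')\}$; by the bijection of the previous paragraph this is the single ideal $I(s_\alpha\rho-\rho)$. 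Thus $\operatorname{PrId}^{-\omega(\alpha)}(O)$ has exactly one element, which is assertion~(c) (understood, in keeping with the theme of this section, with the restriction $\operatorname{Var}(I)=\bar O$; the block of $m_{-\omega(\alpha)}$ also contains primitive ideals of strictly larger associated variety, such as annihilators of Verma modules).
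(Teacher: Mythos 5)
Your proposal is correct in substance and follows the same skeleton as the paper: count the ideals via Lemma~\ref{Lprr}, pin them down by their $\tau$-invariants, and deduce c) from a), b) and Proposition~\ref{Pps} (your reading of c) with the implicit restriction $\operatorname{Var}(I)=\bar O$ is exactly how the paper uses it in Section~\ref{Spr}). The one real divergence is the choice of Weyl group representatives. The paper works with the path elements $w_\alpha=s_{\alpha_{n(\alpha)}}\cdots s_{\alpha_1}s_{\alpha_0}$ precisely because \cite[4.3]{JoM} is invoked for the exact statement that $\{w_\alpha\}_{\alpha\in\Pi}$ is a left cell inside $\operatorname{TCell}(O)$; membership of $w_\alpha^{-1}$ in $\operatorname{TCell}(O)$ then comes for free, and $\tau_R(w_\alpha^{-1})=\Pi\setminus\{\alpha\}$ is read off from the unique reduced expression. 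You instead use $s_\alpha$ itself, which requires the stronger (but true) extraction from Joseph's description that \emph{every} simple reflection lies in $\operatorname{TCell}(O)$, i.e.\ $\operatorname{Var}(I(-\alpha))=\bar O$ for all $\alpha\in\Pi$; a posteriori your ideals coincide with the paper's, since by uniqueness in b) both $I(s_\alpha\rho-\rho)$ and $I(w_\alpha^{-1}\rho-\rho)$ are the member of $\operatorname{PrId}^0(O)$ with $\tau$-invariant $\Pi\setminus\{\alpha\}$ (indeed $s_\alpha$ and $w_\alpha^{-1}$ lie in the same left cell). What your version buys is a cleaner, Duflo-style description of the ideals as $\operatorname{Ann}L(-\alpha)$; what the paper's version buys is that the cited fact from \cite{JoM} is literally the one needed, with no further cell theory.

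One caution: your fallback argument for the key membership step is not rigorous with the tools available in the paper. Proposition~\ref{Passv} only says that equal associated varieties correspond to equal two-sided cells; it gives no compatibility between the preorder on cells and the closure order on special orbits, and the assertion that the cell of $s_\alpha$ is ``the minimal one above $\{e\}$'' is itself unproved (it is essentially equivalent to the fact you are trying to establish). So either keep the direct citation of the explicit description in \cite[4.3--4.4]{JoM} as your justification, or switch to the paper's representatives $w_\alpha^{-1}$, for which the cited statement suffices as written.
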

\begin{proof} Part c) is implied  by parts a) and b), and Proposition~\ref{Pps}. Part a) is implied by part b) and Lemma~\ref{Lprr}. To prove part b) we first show that for any $\alpha\in\Pi$ there exists $I\in\operatorname{PrId}^0(O)$ such that $\tau(I)=\Pi\backslash\alpha$.

Denote by $\alpha_0$ the unique simple root with 3 neighbours. For any $\alpha\in\Pi$ put $\alpha_0, \alpha_1,..., \alpha_{n(\alpha)}$ to be the shortest sequence of adjacent roots which connects $\alpha_0$ with $\alpha=\alpha_{n(\alpha)}$. Set $$w_\alpha:=s_{n(\alpha)}...s_{\alpha_1}s_{\alpha_0}\in W$$ where $s_{\alpha_i}\in W$ is the reflection with respect to $\alpha_i$. It follows from~\cite[Subsection 4.3]{JoM} that
$$\{w_{\alpha}\}_{\alpha\in\Pi}$$ is a left cell in $\operatorname{TCell}(O)$. Thus

$$\{w_{\alpha}^{-1}\}_{\alpha\in\Pi}$$ is a right cell in $\operatorname{TCell}(O)$. One can check that $$\tau(I(w_{\alpha}^{-1}\rho-\rho))=\tau_R(w_\alpha^{-1})=\tau_L(w_\alpha)=\Pi\backslash\{\alpha\},$$see also~\cite[Subsection 4.3-4.4]{JoM}.

To complete the proof of part b) we mention that the ideals $I(w_\alpha^{-1}\rho-\rho)~(\alpha\in\Pi)$ are distinct, and that according to Lemma~\ref{Lprr} we have $|\operatorname{PrId}^0(O)|=|\Pi|$.\end{proof}
\begin{proof}[Proof of Theorem~\ref{T1}a)] Is a composition of Propositions~\ref{Pprw},~\ref{Ppros}.\end{proof}
\section{The semiring of projective functors}\label{Sprf} Let $\lambda\in\Lambda$ be a weight and $V$ be a finite-dimensional $\frak g$-module. We say that a functor $$F: \Ue(\frak g)-mod^{m_\lambda}\to \Ue(\frak g)-mod$$ is {\it projective} if it is a direct summand of a functor $$\cdot\otimes V: M\to M\otimes V~(\Ue(\frak g)-mod^{m_\lambda}\to \Ue(\frak g)-mod),$$
cf. with~\cite{BGG}. It can be checked that $\cdot\otimes V$ maps $\Ue(\frak g)-mod^{m_\lambda}$ to $\oplus_{\mu+\rho\in\Lambda^+}\Ue(\frak g)-mod^{m_{\mu}}\subset\Ue(\frak g)-mod$ and therefore one can consider projective functors as endofunctors of the category $\oplus_{\mu+\rho\in\Lambda^+}\Ue(\frak g)-mod^{m_{\mu}}$.% It is worth to mention that any $\frak g$-module of finite length belongs to such a category.

The projective functors enjoy the following properties:

$\bullet$ a projective functor is exact,

$\bullet$ a projective functor is a direct sum of finitely many indecomposable projective functors,

$\bullet$ the composition of projective functors is projective,

$\bullet$ the direct sum of projective functors is projective,\\
see~\cite[Section~3]{BGG}. The indecomposable projective functors can be described as follows. For any pair $\chi, \xi\in\Lambda$ one can assign an indecomposable projective functor $$F_{\chi, \xi}: \Ue(\frak g)-mod^{m_\chi}\to \Ue(\frak g)-mod^{m_\xi}.$$
Two such functors $F_{\chi_1, \xi_1}$ and $F_{\chi_2, \xi_2}$ are isomorphic if and only if there exists $w\in W$ such that \begin{center}$w(\chi_1+\rho)=\chi_2+\rho$ and $w(\xi_1+\rho)=\xi_2+\rho$.\end{center}

We would be particularly interested in the following collection of functors:
$$\psi_\alpha:=F_{0, -\omega(\alpha)}:~\Ue(\frak g)-mod^{m_0}\to\Ue(\frak g)-mod^{m_{-\omega(\alpha)}}~(\alpha\in\Pi),$$
$$\phi_\alpha:=F_{-\omega(\alpha), 0}:~\Ue(\frak g)-mod^{m_{-\omega(\alpha)}}\to\Ue(\frak g)-mod^{m_0}~(\alpha\in\Pi),$$
$$T_\alpha:=F_{0, s_\alpha\rho-\rho}: \Ue(\frak g)-mod^{m_0}\to\Ue(\frak g)-mod^{m_{s_\alpha\rho-\rho}}=\Ue(\frak g)-mod^{m_0}~(\alpha\in \Pi).$$

It is clear that the indecomposable projective functors form a semiring with respect to the direct sum (considered as an addition) and the composition (considered as a multiplication). We denote this semiring $\mathcal R$. The following lemma is quite standard and is pretty straightforward.
\begin{lemma}Let $Ring_0(\mathcal R)$ be the set of pairs $(r_1, r_2)\in\mathcal R\times\mathcal R$ modulo the equivalence relation $(a, b)\sim(a', b')$ if and only if there exists $t\in\mathcal R$ such that $a+b'+t=a'+b+t$.

a) The operations $(r_1, r_2)+(r_3, r_4):=(r_1+r_3, r_2+r_4)$, $(r_1, r_2)\cdot(r_3, r_4):=(r_1r_3+r_2r_4, r_1r_4+r_2r_3)$ define a structure of a ring on $Ring_0(\mathcal R)$. We denote this ring $Ring(\mathcal R)$. We denote by $$\phi_\mathcal R: \mathcal R\to Ring(\mathcal R)~(r\to (r+r, r))$$ the respective morphism of semirings.

b) If $R'$ is a ring and $\phi'$ is a morphism of semirings then there exists and unique morphism of rings $$\psi: Ring(\mathcal R)\to R'$$ such that $\phi':=\psi\circ \phi$.\end{lemma}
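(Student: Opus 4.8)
The plan is to recognise $Ring(\mathcal R)$ as the universal ring completion of the semiring $\mathcal R$, obtained by formally adjoining additive inverses exactly as one passes from the non-negative integers to $\mathbb Z$; once this is in view, every assertion is a routine verification and the only subtlety is the bookkeeping with the ``witness'' $t$ in the relation $\sim$. For part a) I would first check that $\sim$ is an equivalence relation on $\mathcal R\times\mathcal R$: reflexivity and symmetry are immediate, and transitivity follows by adding the two witnessing identities — if $a+b'+t=a'+b+t$ and $a'+b''+s=a''+b'+s$, then $a+b''+(b'+s+t)=a''+b+(b'+s+t)$. Next, $+$ is well defined on $\sim$-classes, because adding $r_3+r_4$ to a witnessing identity for $(r_1,r_2)\sim(r_1',r_2')$ produces one for $(r_1+r_3,r_2+r_4)\sim(r_1'+r_3,r_2'+r_4)$; commutativity and associativity of $+$ are inherited from $\mathcal R$, the class of $(a,a)$ (which does not depend on $a$) is a two-sided additive identity, and $(r_2,r_1)$ is an additive inverse of $(r_1,r_2)$, so $(Ring_0(\mathcal R),+)$ is an abelian group.

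The one genuinely computational point is that multiplication descends to $\sim$-classes, and it suffices to verify this separately in each argument — the two verifications being mirror images of one another (right-multiplication versus left-multiplication), since $\mathcal R$ need not be commutative. For the left argument, given a witness $r_1+r_2'+t=r_1'+r_2+t$, I would multiply it on the right by $r_3$ and, separately, by $r_4$ (legitimate since composition of projective functors distributes over direct sums, as recalled in Section~\ref{Sprf}), and then combine the two identities by regrouping and substituting the $r_4$-identity into the $r_4$-blocks on both sides; this yields
\[(r_1r_3+r_2r_4)+(r_1'r_4+r_2'r_3)+(tr_3+tr_4)=(r_1'r_3+r_2'r_4)+(r_1r_4+r_2r_3)+(tr_3+tr_4),\]
which is precisely the relation $(r_1r_3+r_2r_4,\,r_1r_4+r_2r_3)\sim(r_1'r_3+r_2'r_4,\,r_1'r_4+r_2'r_3)$ with witness $tr_3+tr_4$. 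With multiplication well defined, associativity and both distributive laws follow mechanically from the corresponding properties of $\mathcal R$ — working in the formal-difference notation $(r_1,r_2)\leftrightarrow r_1-r_2$ makes this transparent — and if $\mathcal R$ carries a multiplicative unit $1$ then $\phi_{\mathcal R}(1)$ is a unit for $Ring(\mathcal R)$ (otherwise $Ring(\mathcal R)$ is simply a ring without unit, which is harmless below). That $\phi_{\mathcal R}$ is a morphism of semirings is a direct check, completing a).

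For part b) I would set $\psi(r_1,r_2):=\phi'(r_1)-\phi'(r_2)\in R'$, using that $R'$ is a group under addition. This is well defined: a witness $r_1+r_2'+t=r_1'+r_2+t$ gives $\phi'(r_1)+\phi'(r_2')+\phi'(t)=\phi'(r_1')+\phi'(r_2)+\phi'(t)$ in $R'$, and cancelling $\phi'(t)$ yields $\phi'(r_1)-\phi'(r_2)=\phi'(r_1')-\phi'(r_2')$. That $\psi$ is additive and multiplicative, and that $\psi\circ\phi_{\mathcal R}=\phi'$ (indeed $\psi(\phi_{\mathcal R}(r))=\phi'(r+r)-\phi'(r)=\phi'(r)$), are straightforward. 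Uniqueness is forced because every element of $Ring(\mathcal R)$ is of the form $\phi_{\mathcal R}(r_1)-\phi_{\mathcal R}(r_2)$ — one checks $(r_1+r_1+r_2,\,r_1+r_2+r_2)\sim(r_1,r_2)$ — so any ring morphism out of $Ring(\mathcal R)$ is determined by its composite with $\phi_{\mathcal R}$.

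The main obstacle, and really the only place one can slip, is the well-definedness of multiplication: one must not merely add the two identities obtained after multiplying the witness by $r_3$ and by $r_4$, but also perform the extra regrouping that converts the $r_4$-terms so as to land on the cross-term pattern $(r_1r_3+r_2r_4,\ r_1r_4+r_2r_3)$; everything else is purely formal. An alternative that removes even this friction is to pass first to the cancellative semiring $\mathcal R/{\approx}$, where $x\approx y$ iff $x+t=y+t$ for some $t\in\mathcal R$ (one verifies that $\approx$ is a semiring congruence), and then to invoke the textbook Grothendieck-ring construction for a cancellative semiring, where pairs are identified by honest equations $\bar a+\bar b'=\bar a'+\bar b$ with no witnesses; this concentrates all the delicate manipulations into the single clean statement that $\approx$ respects $+$ and $\cdot$.
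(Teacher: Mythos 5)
Your proof is correct, and there is nothing in the paper to compare it against: the lemma is stated there as ``quite standard and pretty straightforward'' with no proof given, the intended argument being exactly the Grothendieck-ring construction you carry out. All the computations check out, including the transitivity witness $b'+s+t$ and the key verification that multiplication descends to $\sim$-classes with witness $tr_3+tr_4$ (which uses, as you note, that composition of projective functors distributes over direct sums); the universal property in b) and the surjectivity observation $(r_1+r_1+r_2,\,r_1+r_2+r_2)\sim(r_1,r_2)$ needed for uniqueness are likewise correct.
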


Next, we note that $\mathcal R$ naturally acts on the Grothendieck $K$-group of the category of $\frak g$-modules (this is a straightforward check through the definitions of an exact functor and a $K$-group). Moreover, if $\mathcal C$ is a subcategory of $$\oplus_{\mu+\rho\in\Lambda^+}\Ue(\frak g)-mod^{m_\mu}$$ which is stable under $\cdot\otimes V$ for any finite-dimensional $\frak g$-module $V$, then $\mathcal R$ acts on the $K$-group $K(\mathcal C)$ of $\mathcal C$. The endomorphisms of $K(\mathcal C)$ form  a ring and hence we have a natural action of $Ring(\mathcal R)$ on $K(\mathcal C)$.
\subsection{Basis-dependent description of $Ring(\mathcal R)$} We recall that one can attach to a projective functor $F_{\chi, \xi}$ an endomorphism $F_{\chi, \xi}^K$ of a free lattice generated by $\{\delta_\lambda\}_{\lambda\in\Lambda}$, see~\cite[Subsection~3.4]{BGG} (this corresponds to the action of $Ring(\mathcal R)$ on the Grothendieck group $K(\mathcal O)$ of category $\mathcal O$). The assignment enjoy the following properties

$\bullet$ $F_{\chi_1, \xi_1}^K=F_{\chi_2, \xi_2}^K$ implies that $F_{\chi_1, \xi_1}\cong F_{\chi_2, \xi_2}$,

$\bullet$ $(F_{\chi, \xi})^K(\delta_\lambda)\ne 0$ if and only if $m_\chi=m_\lambda$,

$\bullet$ $F_{\chi_1, \xi_1}^K+F_{\chi_2, \xi_2}^K=(F_{\chi_1, \xi_1}\oplus F_{\chi_2, \xi_2})^K$,

$\bullet$ $F_{\chi_1, \xi_1}^KF_{\chi_2, \xi_2}^K=(F_{\chi_1, \xi_1}\circ F_{\chi_2, \xi_2})^K$\\
($\chi_1, \chi_2, \xi_1, \xi_2, \lambda\in\Lambda$), see~\cite[Subsection~3.4]{BGG}. This immediately implies that the map $(\cdot)^K$ defines the morphism from $Ring(\mathcal R)$ to the endomorphisms of the lattice  $\oplus_{\lambda\in\Lambda}\mathbb F\delta_\lambda$ and that this map is injective. 

Next, we mention that the lattice $\oplus_{\lambda\in\Lambda}\mathbb Z\delta_\lambda$ carries a $W$-action defined by the formula
$$w\cdot\delta_{\lambda}:=\delta_{w(\lambda+\rho)-\rho}~(w\in W, \lambda\in\Lambda)$$
and $F^K_{\chi, \xi}$ commutes with the action of $W$. One can use this fact to provide an action of a Weyl group on the Grothendieck group of the blocks of $\frak g$-modules, see~\cite[Theorem~C.2~of~Appendix]{KZ}.
\begin{lemma} We have

a) $(\phi_{\alpha})^K(\delta_{-\omega(\alpha)})=\delta_0+\delta_{s_\alpha\rho-\rho}=\delta_0+\delta_{-\alpha}$,

b) $(\psi_{\alpha})^K(\delta_0)=\delta_{-\omega(\alpha)}.$

c) $(T_\alpha)^K(\delta_0)=\delta_0+\delta_{-\alpha}$.
\end{lemma}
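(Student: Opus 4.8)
The plan is to realize $\psi_\alpha$, $\phi_\alpha$ and $T_\alpha$ as the classical translation functors of~\cite{BJ} and then to read the three identities off the standard description of the action of such functors on Verma modules. First I would record the local geometry of the central character $m_{-\omega(\alpha)}$: for $\beta\in\Pi$ one computes $\frac{2(-\omega(\alpha)+\rho,\beta)}{(\beta,\beta)}=1-\delta_{\alpha\beta}$, so $-\omega(\alpha)+\rho$ is dominant and $\Delta_{-\omega(\alpha)}=\{\alpha,-\alpha\}$; thus $\Ue(\frak g)-mod^{m_{-\omega(\alpha)}}$ is the block sitting on exactly the $\alpha$-wall, while $\Ue(\frak g)-mod^{m_0}$ is regular. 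I also record the identity $s_\alpha\rho-\rho=-\alpha$, which accounts for the second equality in part a). Now translation onto a single wall, translation out of a single wall, and the wall-crossing functor attached to a simple reflection are each indecomposable projective functors; matching source and target blocks against the classification of indecomposable projective functors recalled above, together with the normalization of the $F_{\chi,\xi}$ used in~\cite{BGG}, forces
$$\psi_\alpha=T^{-\omega(\alpha)}_0,\qquad\phi_\alpha=T^{0}_{-\omega(\alpha)},\qquad T_\alpha=\theta_{s_\alpha}=T^0_{-\omega(\alpha)}\circ T^{-\omega(\alpha)}_0=\phi_\alpha\circ\psi_\alpha,$$
where $T^{-\omega(\alpha)}_0$ denotes translation onto the $\alpha$-wall, $T^0_{-\omega(\alpha)}$ translation out of it, and $\theta_{s_\alpha}$ the wall-crossing functor.

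Granting these identifications the three computations are immediate. For part b), translation onto the $\alpha$-wall sends $M(0)$ to $M(-\omega(\alpha))$, hence $(\psi_\alpha)^K(\delta_0)=\delta_{-\omega(\alpha)}$. For part a), translation out of the $\alpha$-wall sends $M(-\omega(\alpha))$ to a module with a two-step Verma flag whose subquotients are $M(0)$ and $M(s_\alpha\rho-\rho)=M(-\alpha)$, each occurring once, hence $(\phi_\alpha)^K(\delta_{-\omega(\alpha)})=\delta_0+\delta_{s_\alpha\rho-\rho}=\delta_0+\delta_{-\alpha}$. For part c), using $T_\alpha=\phi_\alpha\circ\psi_\alpha$ and the multiplicativity of $(\cdot)^K$,
$$(T_\alpha)^K(\delta_0)=(\phi_\alpha)^K\bigl((\psi_\alpha)^K(\delta_0)\bigr)=(\phi_\alpha)^K(\delta_{-\omega(\alpha)})=\delta_0+\delta_{-\alpha};$$
equivalently $\theta_{s_\alpha}M(0)$ has Verma subquotients $M(0)$ and $M(-\alpha)$, from which c) may be read directly.

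The main obstacle is the bookkeeping in the first paragraph: matching the conventions of~\cite{BGG} (dominant versus antidominant weights, the dot action, and the normalization of the indecomposable projective functors $F_{\chi,\xi}$) with the translation functors of~\cite{BJ}, and verifying that translation onto and out of a single wall and the wall-crossing functor for a simple reflection are indeed indecomposable. Here the statements recorded just before the lemma help: since the block of $m_0$ is regular, $W$-equivariance of $F^K$ reduces the determination of $F^K$ for an endofunctor of $\Ue(\frak g)-mod^{m_0}$ to its value on $\delta_0$, and injectivity of $(\cdot)^K$ on $Ring(\mathcal R)$ then removes any remaining ambiguity. Once these identifications are secured, the three displayed formulas are nothing but the textbook action of translation functors on Verma modules, and no further work is needed. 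One could also bypass the translation-functor language altogether and quote the explicit description of $F^K_{\chi,\xi}$ in~\cite[Subsection~3.4]{BGG} directly.
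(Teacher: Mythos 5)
Your proposal is correct and takes essentially the same route as the paper, which likewise reads a) and b) off the standard translation-functor computations (\cite[Subsections 1.12, 3.3, 3.4]{BGG} and \cite[Theorem 7.14(a)]{H}) and obtains c) from a) and b) via the identification of $T_\alpha$ with the wall-crossing composite $\phi_\alpha\circ\psi_\alpha$. The only soft spot is your claim that matching source and target blocks alone ``forces'' the identifications $\psi_\alpha$, $\phi_\alpha$, $T_\alpha$ with translation onto, out of, and across the $\alpha$-wall — several indecomposable projective functors share those blocks, so one also needs indecomposability plus the value on the dominant Verma (or the adjunction $F_{\chi,\xi}\leftrightarrow F_{\xi,\chi}$), which is exactly the BGG normalization you invoke in your final paragraph.
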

\begin{proof} Part a) follows from a combination of~\cite[Subsections~1.12, 3.3, 3.4]{BGG} and~\cite[Theorem~7.14(a)]{H}. Part b) is implied by~\cite[3.3, Theorem(ii)b)]{BGG}. Parts c) is a consequence of parts a) and b).\end{proof}
For $\lambda\in\Lambda$ denote by $Id_\lambda$ the operator on $\oplus_{\mu\in\Lambda}\delta_\mu$ defined by the formula $$Id_\lambda(\delta_\mu):=\begin{cases}\delta_\mu\mbox{~if~}\mu+\rho=w(\lambda+\rho)\mbox{~for~some~}w\in W,\\0\mbox{~otherwise.}\end{cases}$$
The following corollary would be very useful in the proof of Theorem~\ref{T1}.
\begin{corollary}\label{Cf}a) $(\psi_\alpha)^K(\phi_\alpha)^K=2 Id_{-\omega(\alpha)}$,

b) $(\phi_\alpha)^K(\psi_\alpha)^K=(T_\alpha)^K$,

c) $((T_\alpha)^K)^2=2(T_\alpha)^K$, $((T_\alpha)^K-Id_0)^2=Id_0$,

d) $(((T_\alpha)^K-Id_0)((T_\beta)^K-Id_0))^3=Id_0$ if $\alpha$ and $\beta$ are adjacent,

e) $(((T_\alpha)^K-Id_0)((T_\beta)^K-Id_0))^2=Id_0$ if $\alpha$ and $\beta$ are not adjacent and $\alpha\ne\beta$.
\end{corollary}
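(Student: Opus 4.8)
The plan is to reduce all five identities to formal manipulations with three properties of the assignment $F\mapsto F^K$ recalled above — it is additive and multiplicative in $F$, it is $W$-equivariant for the action $w\cdot\delta_\lambda=\delta_{w(\lambda+\rho)-\rho}$, and $F_{\chi,\xi}^K$ is supported on the block of $\chi$ with image in the block of $\xi$ (precisely $F_{\chi,\xi}^K(\delta_\lambda)\ne 0$ forces $m_\lambda=m_\chi$, and $F_{\chi,\xi}^K(\delta_\lambda)\in\bigoplus_{m_\mu=m_\xi}\mathbb Z\delta_\mu$) — together with the three one-vector formulas $(\phi_\alpha)^K(\delta_{-\omega(\alpha)})=\delta_0+\delta_{-\alpha}$, $(\psi_\alpha)^K(\delta_0)=\delta_{-\omega(\alpha)}$, $(T_\alpha)^K(\delta_0)=\delta_0+\delta_{-\alpha}$ of the preceding lemma and the equality $\delta_{-\alpha}=s_\alpha\cdot\delta_0$. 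The organizing observation is that a $W$-equivariant endomorphism of $\bigoplus_\mu\mathbb Z\delta_\mu$ which vanishes off $\bigoplus_{m_\mu=m_\nu}\mathbb Z\delta_\mu$ is determined by its value at the single vector $\delta_\nu$; so a), b), c) each amount to a one-vector computation together with a check of supports.

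For b): both $(\phi_\alpha)^K(\psi_\alpha)^K$ and $(T_\alpha)^K$ vanish off the block of $0$, so I would just evaluate at $\delta_0$, where chaining $(\psi_\alpha)^K(\delta_0)=\delta_{-\omega(\alpha)}$ with $(\phi_\alpha)^K(\delta_{-\omega(\alpha)})=\delta_0+\delta_{-\alpha}$ reproduces $(T_\alpha)^K(\delta_0)=\delta_0+\delta_{-\alpha}$; hence $(\phi_\alpha)^K(\psi_\alpha)^K=(T_\alpha)^K$. For a): $(\psi_\alpha)^K(\phi_\alpha)^K$ and $2Id_{-\omega(\alpha)}$ vanish off the block of $-\omega(\alpha)$, so I evaluate at $\delta_{-\omega(\alpha)}$: it equals $(\psi_\alpha)^K(\delta_0+\delta_{-\alpha})=\delta_{-\omega(\alpha)}+(\psi_\alpha)^K(s_\alpha\cdot\delta_0)=\delta_{-\omega(\alpha)}+s_\alpha\cdot\delta_{-\omega(\alpha)}$. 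Here the key small point is that $\frac{2(-\omega(\alpha)+\rho,\alpha)}{(\alpha,\alpha)}=-1+1=0$, i.e. $\alpha\in\Delta_{-\omega(\alpha)}$, so $s_\alpha$ fixes $-\omega(\alpha)+\rho$ and hence fixes $\delta_{-\omega(\alpha)}$; the value is therefore $2\delta_{-\omega(\alpha)}=2Id_{-\omega(\alpha)}(\delta_{-\omega(\alpha)})$, proving a).

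Part c) I would then deduce formally. Using b), a), and that $Id_{-\omega(\alpha)}$ is the identity on the image of $(\psi_\alpha)^K$, $((T_\alpha)^K)^2=(\phi_\alpha)^K\bigl((\psi_\alpha)^K(\phi_\alpha)^K\bigr)(\psi_\alpha)^K=(\phi_\alpha)^K(2Id_{-\omega(\alpha)})(\psi_\alpha)^K=2(\phi_\alpha)^K(\psi_\alpha)^K=2(T_\alpha)^K$; expanding $((T_\alpha)^K-Id_0)^2$ and using $Id_0(T_\alpha)^K=(T_\alpha)^K=(T_\alpha)^KId_0$ and $Id_0^2=Id_0$ then gives $Id_0$. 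For d) and e) I would switch to a group-ring model: since $0$ is regular, $\mathrm{Stab}_W(\rho)=\{1\}$, so $w\mapsto\delta_{w\rho-\rho}$ identifies $\mathbb ZW$ with the sublattice $L_0:=\bigoplus_{m_\mu=m_0}\mathbb Z\delta_\mu$ and carries the $W$-action to left multiplication. From $(T_\alpha)^K(\delta_0)=\delta_0+s_\alpha\cdot\delta_0$ and $W$-equivariance, $(T_\alpha)^K$ acts on $L_0$ as right multiplication by $1+s_\alpha$ and is zero off $L_0$; hence $(T_\alpha)^K-Id_0$ is right multiplication by $s_\alpha$ on $L_0$ and zero off $L_0$, so $((T_\alpha)^K-Id_0)((T_\beta)^K-Id_0)$ is right multiplication by $s_\beta s_\alpha$ and its $n$-th power is right multiplication by $(s_\beta s_\alpha)^n$, which — being the identity on $L_0$ and zero off $L_0$ — equals $Id_0$ precisely when $(s_\beta s_\alpha)^n=1$. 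For adjacent $\alpha,\beta$ (a single bond, as $\frak g$ is simply-laced) this holds with $n=3$, giving d); for distinct non-adjacent $\alpha,\beta$ it holds with $n=2$, giving e).

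I expect no genuine obstacle here: the only care needed is the bookkeeping of the supports and targets of the various $F_{\chi,\xi}^K$ — so that the interposed operators $Id_\lambda$ collapse to identities in the right places — and the short weight computation $(-\omega(\alpha)+\rho,\alpha)=0$. Once the identification $L_0\cong\mathbb ZW$ turns $(T_\alpha)^K$ into right multiplication by $1+s_\alpha$, parts d) and e) are pure Coxeter braid relations and part c) is purely formal from a) and b).
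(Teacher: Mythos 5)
Your proposal is correct and follows exactly the route the paper intends: the corollary is stated as an immediate consequence of the preceding lemma, the block-support property of $F_{\chi,\xi}^K$, and the $W$-equivariance of these operators, which is precisely what you use (including the key observation $(-\omega(\alpha)+\rho,\alpha^\vee)=0$ for a), and the identification of the regular block with $\mathbb ZW$ turning $(T_\alpha)^K-Id_0$ into right multiplication by $s_\alpha$ for d), e)). Your explicit remark that d) uses the simply-laced (single-bond) assumption is also the correct reading of the statement.
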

\section{Proof of Theorem~\ref{T1}}\label{Spr}
To start with we note that $\Ue(\frak g, e)-f.d.mod$ is equivalent to the category $\mathcal C$ of $(\frak g, \frak m(e))$-l.n.modules of finite-length and of Gelfand-Kirillov dimension $\frac12\dim O$, see~\cite[Proposition 3.3.5]{LosQ}. We have$$\mathcal C\subset\oplus_{\mu+\rho\in\Lambda^+}\Ue(\frak g)-mod^{m_\mu}.$$ This implies that $Ring(\mathcal R)$ acts on $K(\mathcal C)$. For any $F\in Ring(\mathcal R)$ we denote  by $(F)^{K(\mathcal C)}$ the image of it in $\operatorname{End}(K(\mathcal C))$.

For any $\lambda\in\Lambda$ we put $\mathcal C^\lambda:=\mathcal C\cap\Ue(\frak g)-mod^{m_\lambda}$. Functors $T_\alpha, \psi_\alpha, \phi_\alpha$ acts on these categories as follows
$$\psi_\alpha:\mathcal C^0\to\mathcal C^{-\omega(\alpha)},\hspace{10pt}\phi_\alpha: \mathcal C^{-\omega(\alpha)}\to\mathcal C^0,\hspace{10pt}T_\alpha: \mathcal C^0\to\mathcal C^0.$$
We proceed with a description of $K(\mathcal C^0)$ and $K(\mathcal C^{-\omega(\alpha)})$. For an object $M$ of $\mathcal C$ we denote by $[M]$ the image of it in $K(\mathcal C)$. All objects of $\mathcal C, \mathcal C^0$ have finite length, and
thus $K(\mathcal C)$~(respectively $K(\mathcal C^0)$) are generated by the images of the simple objects of $K(\mathcal C)$ (respectively of $K(\mathcal C^0)$).

%We denote by $(T_\alpha)^{K(\mathcal C)}$ (respectively $(\phi_\alpha)^{K(\mathcal C)}), (\psi_\alpha)^{K(\mathcal C)}$) the image of $(T_\alpha)^K$ (respectively $\phi_\alpha^k, \psi_\alpha^K$) in $\operatorname{End}(K(\mathcal C))$.

Proposition~\ref{Pprw} together with Proposition~\ref{Ppros}c) implies that $\mathcal C^{-\omega(\alpha)}$ has a unique simple object $M_{\alpha}^-$ which corresponds to the unique primitive ideal $I$ of Gelfand-Kirillov dimension $\dim O$ such that $I\supset m_{-\omega(\alpha)}$.

Proposition~\ref{Pprw} together with Proposition~\ref{Ppros} defines a bijection between simple objects of $\mathcal C^{0}$ and elements of $\Pi$. For any $\alpha\in\Pi$ we denote by $M_\alpha$ the respective simple object of $\mathcal C^0$.

We need the following lemma.
\begin{lemma}\label{Ltra}Let $F$ be a projective functor and $M_1, M_2$ be $\frak g$-modules such that $\Ann_{\Ue(\frak g)}M_1=\Ann_{\Ue(\frak g)}M_2$. Then $\Ann_{\Ue(\frak g)}F(M_1)=\Ann_{\Ue(\frak g)}F(M_2)$. In particular, $F(M_1)=0$ if and only if $F(M_2)=0$.\end{lemma}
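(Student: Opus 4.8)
The plan is to reformulate the statement in terms of bimodules and then reduce it to an elementary observation about matrices over $\Ue(\frak g)$. Since $F$ is projective, it is a direct summand of $\cdot\otimes V$ for some finite-dimensional $\frak g$-module $V$; set $d:=\dim V$. The first step is to replace $\cdot\otimes V$ by tensoring over $\Ue(\frak g)$ with the bimodule $P:=\Ue(\frak g)\otimes V$, where the right action is on the left-hand tensor factor and, crucially, the left action is the \emph{diagonal} one $x\cdot(u\otimes v)=xu\otimes v+u\otimes xv$: the map $(u\otimes v)\otimes m\mapsto(um)\otimes v$ is a $\frak g$-module isomorphism $P\otimes_{\Ue(\frak g)}M\to M\otimes V$, natural in $M$, so I may identify the functor $\cdot\otimes V$ with $P\otimes_{\Ue(\frak g)}(\cdot)$. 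Under this identification $F$ becomes, up to natural isomorphism, the image of an idempotent natural transformation $e$ of $P\otimes_{\Ue(\frak g)}(\cdot)$. Writing $\tilde e:=e_{\Ue(\frak g)}\colon P\to P$, a short naturality argument — applied to the $\Ue(\frak g)$-module maps $\Ue(\frak g)\to M$, $a\mapsto am$, and to right multiplications $\Ue(\frak g)\to\Ue(\frak g)$ — shows that $\tilde e$ is right $\Ue(\frak g)$-linear and that $e_M=\tilde e\otimes_{\Ue(\frak g)}\mathrm{id}_M$ for every $M$.

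With this in hand I would compute $\Ann_{\Ue(\frak g)}F(M)$. Because $e_M$ is $\Ue(\frak g)$-linear one has $z\cdot e_M(x)=e_M(z\cdot x)$, and since $F(M)$ is the image of $e_M$ this gives $\Ann_{\Ue(\frak g)}F(M)=\{z\in\Ue(\frak g)\mid e_M\circ\lambda_z=0\}$, where $\lambda_z$ denotes the action of $z$ on $P\otimes_{\Ue(\frak g)}M$. Now $\lambda_z=L_z\otimes_{\Ue(\frak g)}\mathrm{id}_M$, where $L_z\colon P\to P$ is left multiplication by $z$ for the diagonal action, which is right $\Ue(\frak g)$-linear; hence $e_M\circ\lambda_z=(\tilde e\circ L_z)\otimes_{\Ue(\frak g)}\mathrm{id}_M$, and $\tilde e\circ L_z$ is a right $\Ue(\frak g)$-linear endomorphism of $P$. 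Since $P$ is free of rank $d$ as a right $\Ue(\frak g)$-module (with basis $1\otimes v_1,\dots,1\otimes v_d$ for a basis $v_1,\dots,v_d$ of $V$), this endomorphism is given by a matrix $N(z)\in\mathrm{Mat}_d(\Ue(\frak g))$ whose entries are computed explicitly from $z$, $V$ and $\tilde e$ alone, with no reference to $M$. Finally, under $P\otimes_{\Ue(\frak g)}M\cong M^{\oplus d}$ the operator $e_M\circ\lambda_z$ becomes the action of the matrix $N(z)$ on $M^{\oplus d}$, which vanishes precisely when every entry of $N(z)$ annihilates $M$. Thus $\Ann_{\Ue(\frak g)}F(M)=\{z\in\Ue(\frak g)\mid N(z)_{ij}\in\Ann_{\Ue(\frak g)}M\text{ for all }i,j\}$ depends only on $\Ann_{\Ue(\frak g)}M$, which applied to $M_1$ and $M_2$ gives the first assertion; the last sentence then follows since $F$ annihilates a module if and only if the annihilator of its image is all of $\Ue(\frak g)$.

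I do not expect a genuine obstacle — the statement is standard, cf.~\cite[Section~3]{BGG} — but the one point that must be handled with care is the bookkeeping of the two $\Ue(\frak g)$-actions on $P=\Ue(\frak g)\otimes V$: it is essential that the left action representing $\cdot\otimes V$ is the diagonal one, so that $L_z$ is right $\Ue(\frak g)$-linear and the matrix $N(z)$ makes sense. It is also worth noting why one cannot simply reduce to the case $F=\cdot\otimes V$ (where the matrix argument is immediate): $F(M)$ is only a direct summand of $M\otimes V$, and controlling the annihilator of that summand is exactly what the idempotent-on-the-bimodule device accomplishes.
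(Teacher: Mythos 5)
Your reduction of $\cdot\otimes V$ to tensoring with the bimodule $P=\Ue(\frak g)\otimes V$ (diagonal left action, right action on the first tensor factor), and the matrix computation at the end, are both fine; the gap is the middle step, where you set $\tilde e:=e_{\Ue(\frak g)}$. The idempotent $e$ splitting off $F$ is only a natural endomorphism of the restriction of $\cdot\otimes V$ to $\Ue(\frak g)-mod^{m_\lambda}$ — that is how a projective functor is defined in the paper — and the regular module $\Ue(\frak g)$ is not an object of that category ($\operatorname{Z}(\frak g)$ does not act on it through a generalized character), so $e_{\Ue(\frak g)}$ is undefined, as are the naturality squares you invoke for the maps $\Ue(\frak g)\to M$, $a\mapsto am$. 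This is not a removable formality: an idempotent endomorphism of the bimodule $P$ itself would split $\cdot\otimes V$ as a functor on \emph{all} $\frak g$-modules, and the projective functors the lemma is applied to do not arise this way — already the projection onto a generalized central character (hence each $\psi_\alpha,\phi_\alpha,T_\alpha$, and a fortiori the indecomposable summands $F_{\chi,\xi}$) exists only on $\operatorname{Z}(\frak g)$-locally finite modules and does not come from an idempotent in $\operatorname{End}_{\mathrm{bimod}}(P)$, which would require an infinite sum of central projections. So, as written, your matrix $N(z)$ need not exist for the functors in question.

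The repair stays close to your argument: choose $n$ with $m_\lambda^nM_1=m_\lambda^nM_2=0$ (automatic for the finite-length modules to which the lemma is applied; note $m_\lambda^n\subset\Ann_{\Ue(\frak g)}M_i$), put $A_n:=\Ue(\frak g)/\Ue(\frak g)m_\lambda^n$ and $P_n:=P/Pm_\lambda^n\cong A_n\otimes V$, and run your evaluation-and-naturality argument with $A_n$ in place of $\Ue(\frak g)$: modules killed by $m_\lambda^n$ are exactly $A_n$-modules, $e_{A_n}$ is defined, naturality against right multiplications and against $A_n\to M$ gives $e_M=e_{A_n}\otimes_{A_n}\mathrm{id}_M$, and $P_n$ is free of rank $\dim V$ over $A_n$, so your formula exhibits $\Ann_{\Ue(\frak g)}F(M)$ as a function of $\Ann_{\Ue(\frak g)}M$ alone. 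With that change the proof is correct and self-contained, whereas the paper simply quotes Lemma~2.3 of \cite{V}.
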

\begin{proof}Is implied by~\cite[Lemma~2.3]{V}.\end{proof}
Assembling together Lemma~\ref{Ltra},~\cite[Satz~2.14]{BJ} and Proposition~\ref{Ppros} we prove the following lemma.
\begin{lemma}\label{Ltra-}We have $\psi_\alpha(M_\beta)=0$ if and only $\alpha\ne\beta\in\Pi$.\end{lemma}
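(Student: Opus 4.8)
The plan is to combine the $K$-theoretic computation of the projective functors (Corollary~\ref{Cf}) with the invariant-theoretic control provided by Lemma~\ref{Ltra}. First I would recall that, by Proposition~\ref{Ppros}c), the category $\mathcal C^{-\omega(\alpha)}$ has exactly one simple object $M_\alpha^-$, so that $K(\mathcal C^{-\omega(\alpha)})\cong\mathbb Z$. Hence, to decide whether $\psi_\alpha(M_\beta)=0$ it is enough to compute the class $[\psi_\alpha(M_\beta)]\in K(\mathcal C^{-\omega(\alpha)})$, i.e.\ to compute $(\psi_\alpha)^{K(\mathcal C)}[M_\beta]$, and then to upgrade "the class is zero" to "the module is zero". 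The latter upgrade is automatic: $\psi_\alpha$ is exact and $\mathcal C^{-\omega(\alpha)}$ has a unique simple object $M_\alpha^-$, so $[\psi_\alpha(M_\beta)]=0$ in $K(\mathcal C^{-\omega(\alpha)})$ forces $\psi_\alpha(M_\beta)=0$.

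Next I would compute the action of $(\psi_\alpha)^K$ on the classes $[M_\beta]$. The point is that $M_\beta$ corresponds, via Proposition~\ref{Pprw} and Proposition~\ref{Ppros}, to the primitive ideal $I(w_\beta^{-1}\rho-\rho)$ with $\tau$-invariant $\Pi\setminus\{\beta\}$, and this primitive ideal is built from an integral regular-block calculation. Writing $[M_\beta]$ inside the ambient Grothendieck group $K(\mathcal O)$ of category $\mathcal O$, one has a decomposition of $[M_\beta]$ into classes of simple highest-weight modules $\delta_{w\rho-\rho}$, and $(\psi_\alpha)^K$ acts on these by the formula from the lemma before Corollary~\ref{Cf}, namely $(\psi_\alpha)^K(\delta_0)=\delta_{-\omega(\alpha)}$, together with $W$-equivariance and the vanishing $(\psi_\alpha)^K(\delta_\lambda)=0$ when $m_\lambda\ne m_0$. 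The upshot, which is exactly what~\cite[Satz~2.14]{BJ} records, is that the translation-to-the-wall functor $\psi_\alpha$ kills precisely those simple modules in the regular integral block whose $\tau$-invariant contains $\alpha$, and is nonzero (injective on annihilators) on the rest. Since $\tau(\Ann M_\beta)=\Pi\setminus\{\beta\}$, we get $\alpha\in\tau(\Ann M_\beta)$ iff $\alpha\ne\beta$, hence $\psi_\alpha(M_\beta)=0$ iff $\alpha\ne\beta$. To make the "iff" rigorous in both directions: if $\alpha=\beta$, then $\alpha\notin\tau(\Ann M_\alpha)$, so $\psi_\alpha(M_\alpha)\ne0$ by~\cite[Satz~2.14]{BJ} (equivalently, $\psi_\alpha$ does not annihilate $M_\alpha$ because the wall $\alpha$ is not in its $\tau$-invariant); and because $\mathcal C^{-\omega(\alpha)}$ has the unique simple object $M_\alpha^-$, it follows that $\psi_\alpha(M_\alpha)\ne0$, consistent with $M_\alpha^-$ being hit.

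The role of Lemma~\ref{Ltra} is to let me pass freely between a module and its annihilator throughout: the statement of~\cite[Satz~2.14]{BJ} is phrased at the level of primitive ideals and $\tau$-invariants, while I need a statement about the modules $M_\beta$ and $M_\alpha^-$ themselves. Lemma~\ref{Ltra} guarantees that $\psi_\alpha(M_\beta)$ depends only on $\Ann_{\Ue(\frak g)}M_\beta$, and in particular $\psi_\alpha(M_\beta)=0$ iff $\psi_\alpha$ annihilates the simple highest-weight module with the same primitive annihilator; this is where the bridge to the cell-combinatorics of Section~\ref{Scsfd} and to Proposition~\ref{Ppros} is made. Finally, one must check that $\psi_\alpha(M_\beta)$, when nonzero, indeed lands in $\mathcal C^{-\omega(\alpha)}$ and not merely in $\Ue(\frak g)-mod^{m_{-\omega(\alpha)}}$: this is the Gelfand--Kirillov dimension / associated variety bookkeeping, and it follows because projective (translation) functors preserve associated varieties of annihilators, so $\Var(\Ann\psi_\alpha(M_\beta))=\Var(\Ann M_\beta)=\bar O$, whence $\psi_\alpha(M_\beta)\in\mathcal C$.

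I expect the main obstacle to be precisely the identification of $[M_\beta]\in K(\mathcal O)$ in terms of the $\delta$-basis finely enough to read off the $\tau$-invariant criterion — equivalently, citing~\cite[Satz~2.14]{BJ} in exactly the right form. Everything else (exactness of $\psi_\alpha$, uniqueness of the simple object in $\mathcal C^{-\omega(\alpha)}$, the $K$-group being $\mathbb Z$, and the dimension bookkeeping) is formal once that input is in place.
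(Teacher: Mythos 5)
Your argument is correct and is essentially the paper's own proof: the paper likewise deduces the lemma by assembling Lemma~\ref{Ltra} (to transfer the question from $M_\beta$ to the simple highest-weight module with the same annihilator), the Borho--Jantzen criterion~\cite[Satz~2.14]{BJ} for translation to the wall $-\omega(\alpha)$ killing a simple module according to whether $\alpha$ lies in its $\tau$-invariant, and Proposition~\ref{Ppros}, which gives $\tau(\Ann M_\beta)=\Pi\setminus\{\beta\}$. The $K$-theoretic framing in your first paragraph is harmless but not needed, since the $\tau$-invariant criterion already decides vanishing at the level of modules rather than classes.
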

As a corollary we have that $T_\alpha(M_\beta)=0$ if and only if $\alpha\ne\beta$; $(T_\alpha)^{K(\mathcal C)}[M_\alpha]=\Sigma_\beta c_{\alpha, \beta}[M_{\beta}]$ for some $c_{\alpha, \beta}\in\mathbb Z_{\ge0}$.
\begin{lemma}\label{Lm}We have $c_{\alpha, \alpha}=2$ for all $\alpha\in\Pi$.\end{lemma}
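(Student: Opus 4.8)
The plan is to compute $c_{\alpha,\alpha}$ by factoring $T_\alpha$ through the singular block $\mathcal C^{-\omega(\alpha)}$, using the two identities $(T_\alpha)^K=(\phi_\alpha)^K(\psi_\alpha)^K$ and $(\psi_\alpha)^K(\phi_\alpha)^K=2\,Id_{-\omega(\alpha)}$ from Corollary~\ref{Cf}. Since $(\cdot)^K$ is injective on $Ring(\mathcal R)$, these are equalities in $Ring(\mathcal R)$, hence hold as operator identities on $K(\mathcal C)$; moreover $Id_{-\omega(\alpha)}$ acts as the identity on the direct summand $K(\mathcal C^{-\omega(\alpha)})\subset K(\mathcal C)$ because $\mathcal C^{-\omega(\alpha)}\subset\Ue(\frak g)-mod^{m_{-\omega(\alpha)}}$. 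First I would record that, by the discussion preceding the lemma, $\mathcal C^{-\omega(\alpha)}$ has a unique simple object $M^-_\alpha$, and since every object of $\mathcal C$ has finite length, $K(\mathcal C^{-\omega(\alpha)})$ is free abelian on $[M^-_\alpha]$; in particular the class of a nonzero object of $\mathcal C^{-\omega(\alpha)}$ is a positive multiple of $[M^-_\alpha]$.

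Next I would set up the two relevant classes. By Lemma~\ref{Ltra-}, $\psi_\alpha(M_\beta)=0$ for $\beta\ne\alpha$ while $\psi_\alpha(M_\alpha)\ne 0$, so $[\psi_\alpha(M_\alpha)]=k[M^-_\alpha]$ for some integer $k\ge 1$. Write also $[\phi_\alpha(M^-_\alpha)]=\sum_{\beta\in\Pi}d_\beta[M_\beta]$ in $K(\mathcal C^0)$, with all $d_\beta\in\mathbb Z_{\ge 0}$. Applying $\psi_\alpha$ to this identity and using $(\psi_\alpha)^K(\phi_\alpha)^K=2\,Id_{-\omega(\alpha)}$ gives
$$2[M^-_\alpha]=(\psi_\alpha)^{K(\mathcal C)}[\phi_\alpha(M^-_\alpha)]=\sum_{\beta\in\Pi}d_\beta[\psi_\alpha(M_\beta)]=d_\alpha k\,[M^-_\alpha],$$
so $d_\alpha k=2$. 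Finally, from $(T_\alpha)^{K(\mathcal C)}=(\phi_\alpha)^{K(\mathcal C)}(\psi_\alpha)^{K(\mathcal C)}$,
$$(T_\alpha)^{K(\mathcal C)}[M_\alpha]=(\phi_\alpha)^{K(\mathcal C)}\big(k[M^-_\alpha]\big)=k\sum_{\beta\in\Pi}d_\beta[M_\beta],$$
and comparing coefficients against the basis $\{[M_\beta]\}_{\beta\in\Pi}$ of $K(\mathcal C^0)$ yields $c_{\alpha,\beta}=k d_\beta$ for every $\beta$, hence $c_{\alpha,\alpha}=k d_\alpha=2$.

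The argument is short, and the only point requiring genuine care is the passage from the identities in Corollary~\ref{Cf}, established through the action on the lattice $\oplus_{\lambda\in\Lambda}\mathbb F\delta_\lambda$, to honest operator identities on $K(\mathcal C)$; this is precisely why the injectivity of $(\cdot)^K$ on $Ring(\mathcal R)$ was recorded, together with the remark that $Ring(\mathcal R)$ acts on $K(\mathcal C)$ since $\mathcal C$ is stable under tensoring by finite-dimensional $\frak g$-modules. Everything else reduces to arithmetic of nonnegative integers and the observation that in a finite-length category with a single simple object the class of a nonzero object is a positive multiple of that simple's class, so I do not expect any real obstacle.
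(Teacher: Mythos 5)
Your argument is correct and rests on exactly the same ingredients as the paper's proof, namely Lemma~\ref{Ltra-} together with the projective-functor identities of Corollary~\ref{Cf}; you merely unfold the quadratic relation $((T_\alpha)^K)^2=2(T_\alpha)^K$ that the paper invokes directly, replacing it by its source $(T_\alpha)^K=(\phi_\alpha)^K(\psi_\alpha)^K$ and $(\psi_\alpha)^K(\phi_\alpha)^K=2\,Id_{-\omega(\alpha)}$ and bookkeeping the multiplicities $k$, $d_\beta$ through the singular block. As a small bonus your relation $k\,d_\alpha=2$ already constrains the constant $k=c$ appearing later in Lemma~\ref{Lmul}, but the proof itself is essentially the paper's.
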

\begin{proof}Is implied by Lemma~\ref{Ltra-} and formula $((T_\alpha)^K)^2=2(T_\alpha)^K$.\end{proof}
\begin{lemma}\label{Lp3}a) If $\alpha, \beta\in\Pi$ are adjacent then $c_{\alpha, \beta}=c_{\beta,\alpha}=1$.

b) If $\alpha\ne\beta\in\Pi$ are not adjacent then $c_{\alpha, \beta}=c_{\beta,\alpha}=0$.
\end{lemma}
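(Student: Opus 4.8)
The plan is to extract the numbers $c_{\alpha,\beta}$ from the identities in Corollary~\ref{Cf} by passing to the action on $K(\mathcal C^0)$. We already know from Lemma~\ref{Ltra-} and its corollary that $T_\alpha(M_\beta)=0$ for $\beta\ne\alpha$, so $(T_\alpha)^{K(\mathcal C)}$ is supported on the one-dimensional span of $[M_\alpha]$ and sends $[M_\alpha]\mapsto\sum_\gamma c_{\alpha,\gamma}[M_\gamma]$ with $c_{\alpha,\gamma}\in\mathbb Z_{\ge0}$ and $c_{\alpha,\alpha}=2$ by Lemma~\ref{Lm}. The first step is to observe that $(T_\alpha)^{K(\mathcal C)}=(\phi_\alpha)^{K(\mathcal C)}(\psi_\alpha)^{K(\mathcal C)}$, by Corollary~\ref{Cf}b) and functoriality of the $K$-theoretic action; since $\psi_\alpha(M_\alpha)$ is the unique simple object $M_\alpha^-$ of $\mathcal C^{-\omega(\alpha)}$ (up to multiplicity), this already shows $[T_\alpha M_\alpha]$ is a nonnegative multiple of $[\phi_\alpha M_\alpha^-]$.

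Next I would feed the two adjacency relations of Corollary~\ref{Cf}d),e) into $K(\mathcal C^0)$. Write $t_\alpha:=(T_\alpha)^{K(\mathcal C)}$ and $s_\alpha:=t_\alpha-\mathrm{Id}$ as operators on $K(\mathcal C^0)$; note $s_\alpha^2=\mathrm{Id}$ by Corollary~\ref{Cf}c), so each $s_\alpha$ is an involution. For $\alpha,\beta$ non-adjacent and distinct, Corollary~\ref{Cf}e) gives $(s_\alpha s_\beta)^2=\mathrm{Id}$, i.e. $s_\alpha$ and $s_\beta$ commute; applying this to $[M_\beta]$, which $s_\beta$ fixes and on which $s_\alpha$ acts by $-[M_\beta]+\sum_\gamma c_{\alpha,\gamma}[M_\gamma]$, and comparing coefficients of $[M_\beta]$ forces $c_{\alpha,\beta}=0$ (this is the content of part b); by symmetry $c_{\beta,\alpha}=0$ too). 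For $\alpha,\beta$ adjacent, Corollary~\ref{Cf}d) gives $(s_\alpha s_\beta)^3=\mathrm{Id}$, so the subgroup of $\mathrm{GL}(K(\mathcal C^0)\otimes\mathbb Q)$ generated by $s_\alpha,s_\beta$ is a quotient of $S_3$; restricting to the rank-$2$ (or rank-$3$ — see below) sublattice spanned by $[M_\alpha],[M_\beta]$ (and possibly neighbours) and using that $s_\alpha$ has the matrix $\left(\begin{smallmatrix}-1&c_{\beta,\alpha}\\0&1\end{smallmatrix}\right)$ on $\mathrm{span}([M_\alpha],[M_\beta])$ modulo the rest, the braid relation $s_\alpha s_\beta s_\alpha=s_\beta s_\alpha s_\beta$ pins down $c_{\alpha,\beta}c_{\beta,\alpha}=1$, hence $c_{\alpha,\beta}=c_{\beta,\alpha}=1$ since both are nonnegative integers.

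The main obstacle I anticipate is the bookkeeping in the adjacent case: a priori $t_\alpha[M_\alpha]$ could have a component $c_{\alpha,\gamma}[M_\gamma]$ for $\gamma$ adjacent to $\alpha$ but different from $\beta$, so the operators $s_\alpha,s_\beta$ do not obviously preserve a $2$-dimensional sublattice, and one must run the $S_3$-relation on the full span of $[M_\alpha]$, $[M_\beta]$, and all their neighbours. The cleanest way around this is to work one pair at a time using Proposition~\ref{Pps}/Lemma~\ref{Ltra} in the rank-$2$ parabolic: restrict attention to the Levi subalgebra generated by $\alpha,\beta$ (of type $A_2$), where by Lemma~\ref{Ltra} the relevant annihilators and hence the functors $T_\alpha,T_\beta$ and the multiplicities $c_{\alpha,\beta},c_{\beta,\alpha}$ are computed already inside the $A_2$-block; there the two simple objects of $\mathcal C^0$ are explicitly the two non-special primitive ideals, and a direct check (or the known $\mathfrak{sl}_3$ computation) gives $c_{\alpha,\beta}=c_{\beta,\alpha}=1$. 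Alternatively, one can avoid the parabolic reduction entirely by noting $s_\alpha\in\mathrm{GL}(K(\mathcal C^0))$ must have finite order (being an involution) and integer matrix, and an involution over $\mathbb Z$ whose off-diagonal structure is a nonnegative integer $c$ forces $c\le 1$; combined with $(s_\alpha s_\beta)^3=1$ ruling out $c=0$ in the adjacent case, this finishes part a).
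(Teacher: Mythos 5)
Your main line is essentially the paper's own argument: both feed the relations of Corollary~\ref{Cf}c),d),e) into $K(\mathcal C^0)$ and solve for the nonnegative integers $c_{\alpha,\beta}$ from the resulting $2\times2$ matrix identity. Moreover, the ``obstacle'' you flag (components $c_{\alpha,\gamma}[M_\gamma]$ for other neighbours $\gamma$) is already resolved by the quotient you set up: since $T_\alpha M_\gamma=T_\beta M_\gamma=0$ for $\gamma\ne\alpha,\beta$, each $s_\alpha$ preserves the sublattice spanned by $\{[M_\gamma]\}_{\gamma\ne\alpha,\beta}$, so the Coxeter relations descend to the rank-two quotient spanned by the images of $[M_\alpha],[M_\beta]$; this is exactly what the paper's formulas (\ref{Ef1})--(\ref{Ef3}) encode, and in that quotient the braid relation does force $c_{\alpha,\beta}c_{\beta,\alpha}=1$ (the paper instead argues $\operatorname{tr}X=c_{\alpha,\beta}c_{\beta,\alpha}-2\in\{-1,2\}$ and then excludes the value $4$ by a direct check of $X^3=\mathbf 1$). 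So part a) is sound once you actually carry out that $2\times2$ computation rather than assert it.

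Two of your auxiliary claims are, however, wrong and should be deleted rather than patched. First, $s_\beta$ does \emph{not} fix $[M_\beta]$: one has $s_\beta[M_\beta]=[M_\beta]+\sum_{\gamma\ne\beta}c_{\beta,\gamma}[M_\gamma]$, and it is $s_\alpha$ (for $\alpha\ne\beta$) that acts on $[M_\beta]$ by $-[M_\beta]$; you have the two operators interchanged. The conclusion of part b) survives (compare the $[M_\beta]$-coefficient of $s_\alpha s_\beta[M_\alpha]$ and $s_\beta s_\alpha[M_\alpha]$, which gives $-c_{\alpha,\beta}=c_{\alpha,\beta}$; equivalently the paper's $X^2=\mathbf 1$, $\det X=1$, hence $X=\pm\mathbf 1$), but the computation as written does not. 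Second, the fallback ``an integral involution whose off-diagonal entry is a nonnegative integer $c$ forces $c\le1$'' is false: $\left(\begin{smallmatrix}1&0\\ c&-1\end{smallmatrix}\right)$ is an involution for every $c$. The other fallback, reduction to the $A_2$ Levi, is also not justified by anything you cite: Lemma~\ref{Ltra} concerns projective functors on $\frak g$-modules and gives no restriction functor to a Levi preserving the condition $\operatorname{Var}(I)=\bar O$, so the claim that the $c_{\alpha,\beta}$ ``are computed already inside the $A_2$-block'' would need a separate argument. Neither fallback is needed.
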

\begin{proof}Part a). We have that
\begin{equation}((T_\alpha)^K-Id_0)^{K(\mathcal C)}((T)^K_\beta-Id_0)^{K(\mathcal C)}[M_\alpha]=-[M_\alpha]-c_{\alpha, \beta}[M_\beta]-\Sigma_{\gamma\ne\alpha, \beta}c_{\alpha, \gamma}[M_\gamma],\label{Ef1}\end{equation}
\begin{equation}((T)^K_\alpha-Id_0)^{K(\mathcal C)}((T)^K_\beta-Id_0)^{K(\mathcal C)}[M_\beta]=c_{\beta, \alpha}[M_\alpha]+(c_{\beta, \alpha}c_{\alpha, \beta}-1)[M_\beta]+\Sigma_{\gamma\ne \alpha, \beta}(c_{\beta, \alpha}c_{\alpha, \gamma}-c_{\beta, \gamma})[M_\gamma],\label{Ef2}\end{equation}
\begin{equation}((T)^K_\alpha-Id_0)^{K(\mathcal C)}((T)^K_\beta-Id_0)^{K(\mathcal C)}[M_\gamma]=[M_\gamma]~(\gamma\ne\alpha, \beta).\label{Ef3}\end{equation}

We fix $\alpha, \beta\in\Pi$ such that $\alpha$ and $\beta$ are adjacent. Formulas~(\ref{Ef1}), (\ref{Ef2}), (\ref{Ef3}) together with Corollary~\ref{Cf}e) implies that
\begin{equation}(\begin{array}{cc}-1&-c_{\alpha, \beta}\\c_{\beta, \alpha}&c_{\alpha,\beta}c_{\beta,\alpha}-1\end{array})^3=(\begin{array}{cc}1&0\\0&1\end{array}).\label{Ea3}\end{equation}
We put 
$$X:=(\begin{array}{cc}-1&-c_{\alpha, \beta}\\c_{\beta, \alpha}&c_{\alpha,\beta}c_{\beta,\alpha}-1\end{array}).$$
Equation~(\ref{Ea3}) implies that all eigenvalues of $X$ are roots of unity of degree 3 and therefore the trace $\operatorname{tr}X$ of $X$ equals to the sum of two (not necessarily distinct) roots of unity of degree 3. On the other hand $\operatorname{tr}X$ equals $c_{\alpha,\beta}c_{\beta,\alpha}-2$, and hence $\operatorname{tr}X$ is an integer. It can be easily checked that if such a sum is an integer then it is equal to -1 or 2. Thus $c_{\alpha,\beta}c_{\beta,\alpha}\in\{1, 4\}$. Hence $c_{\alpha,\beta}=c_{\beta,\alpha}=1$, or $c_{\alpha,\beta}=4$ and $c_{\beta,\alpha}=1$, or $c_{\alpha,\beta}=c_{\beta,\alpha}=2$, or $c_{\alpha,\beta}=1$ and $c_{\beta,\alpha}=4$. It can be easily seen that $X^3={\bf 1}$ if and only if $c_{\alpha,\beta}=c_{\beta,\alpha}=1$.

Part b). We fix $\alpha, \beta\in\Pi$ such that $\alpha$ and $\beta$ are adjacent. Formulas~(\ref{Ef1}),~(\ref{Ef2}),~(\ref{Ef3}) together with Corollary~\ref{Cf}d) implies that $X^2={\bf 1}$. Further we have $$\operatorname{det}(X)=(-1)(c_{\alpha,\beta}c_{\beta,\alpha}-1)-(-c_{\alpha,\beta})c_{\beta,\alpha}=1.$$
These two facts together implies that $X={\bf 1}$ or $X=-{\bf 1}$. As a consequence we have $c_{\alpha,\beta}=c_{\beta,\alpha}=0$.\end{proof}
\begin{lemma}\label{Lmul} a) We have $\psi_\alpha M_\alpha\cong M_\alpha^-$.

b) If $\alpha, \beta\in\Pi$ are adjacent then we have $\psi_\beta(\phi_\alpha M_\alpha^-)\cong M_\beta^-$.\end{lemma}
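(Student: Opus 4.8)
The plan is to identify the objects $\psi_\alpha M_\alpha$, $\phi_\alpha M_\alpha^-$, $\psi_\beta(\phi_\alpha M_\alpha^-)$ by first pinning down their classes in the Grothendieck group and then using that each of the relevant blocks $\mathcal C^{-\omega(\gamma)}$ has a \emph{unique} simple object $M_\gamma^-$ (Proposition~\ref{Pprw} together with Proposition~\ref{Ppros}c)). For part a), I would first apply $(\psi_\alpha)^{K(\mathcal C)}$ to $[M_\alpha]$. Since $K(\mathcal C^{-\omega(\alpha)})=\mathbb Z[M_\alpha^-]$, we must have $(\psi_\alpha)^{K(\mathcal C)}[M_\alpha]=n[M_\alpha^-]$ for some $n\ge 0$; Lemma~\ref{Ltra-} shows $\psi_\alpha M_\alpha\ne 0$, so $n\ge 1$. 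To see $n=1$, I would compose with $(\phi_\alpha)^{K(\mathcal C)}$: by Corollary~\ref{Cf}a) the composite $(\phi_\alpha)^K(\psi_\alpha)^K$ equals $(T_\alpha)^K$, and by Lemma~\ref{Lm} the coefficient $c_{\alpha,\alpha}$ in $(T_\alpha)^{K(\mathcal C)}[M_\alpha]$ is $2$, not larger. Tracking the multiplicities through $\phi_\alpha$ then forces $n=1$, so $\psi_\alpha M_\alpha$ has the same class as $M_\alpha^-$; since both lie in the block $\mathcal C^{-\omega(\alpha)}$ whose only simple object is $M_\alpha^-$, and the class determines the object when there is a single simple, we conclude $\psi_\alpha M_\alpha\cong M_\alpha^-$ (after checking $\psi_\alpha M_\alpha$ is semisimple, i.e.\ has no self-extensions — which follows because $M_\alpha^-$ is the unique simple in its block and exactness of $\psi_\alpha$ together with the multiplicity-one statement leaves no room for a nonsplit extension, or alternatively one invokes the one-dimensionality argument of Proposition~\ref{Pext}).

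For part b), I would run the analogous bookkeeping one block further. First, $\phi_\alpha M_\alpha^-$ lives in $\mathcal C^0$, and by Corollary~\ref{Cf}a), Lemma~\ref{Lmul}a), and $(T_\alpha)^{K(\mathcal C)}[M_\alpha]=2[M_\alpha]+c_{\alpha,\beta}[M_\beta]+\cdots$ — where by Lemma~\ref{Lp3} the only nonzero off-diagonal terms are $c_{\alpha,\beta}=1$ for $\beta$ adjacent to $\alpha$ — we get
$$(\phi_\alpha)^{K(\mathcal C)}[M_\alpha^-]=2[M_\alpha]+\sum_{\beta\text{ adjacent to }\alpha}[M_\beta].$$
Now apply $(\psi_\beta)^{K(\mathcal C)}$ for a fixed $\beta$ adjacent to $\alpha$. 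By Lemma~\ref{Ltra-}, $\psi_\beta$ kills $M_\alpha$ and every $M_\gamma$ with $\gamma\ne\beta$, so $(\psi_\beta)^{K(\mathcal C)}(\phi_\alpha)^{K(\mathcal C)}[M_\alpha^-]=(\psi_\beta)^{K(\mathcal C)}[M_\beta]=[M_\beta^-]$ using part a). Hence $\psi_\beta(\phi_\alpha M_\alpha^-)$ has class $[M_\beta^-]$ in $K(\mathcal C^{-\omega(\beta)})$; as that block has $M_\beta^-$ as its unique simple object, the same semisimplicity remark as above gives $\psi_\beta(\phi_\alpha M_\alpha^-)\cong M_\beta^-$.

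The main obstacle I anticipate is not the Grothendieck-group arithmetic — that is forced by Corollary~\ref{Cf} and Lemmas~\ref{Lm}, \ref{Lp3}, \ref{Ltra-} — but the passage from equality of classes in $K(\mathcal C)$ to isomorphism of objects. This requires knowing that the object in question is semisimple, equivalently that its unique simple constituent has no self-extensions within $\mathcal C$. For $\mathcal C^{-\omega(\alpha)}$ I would argue that, because it contains a single simple object and $\psi_\alpha$ (resp.\ $\psi_\beta\phi_\alpha$) is exact with the computed multiplicity being the minimal possible, any extension would have to split; a clean way to see this is to transport to $\Ue(\frak g,e)$-modules, where the relevant module is the one-dimensional module of Proposition~\ref{Pext} (nonintegral central character being excluded by Proposition~\ref{Pt1} is not needed here, but the analogue of the self-extension vanishing is). Once this rigidity is in place, parts a) and b) follow immediately from the class computations.
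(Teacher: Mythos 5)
Your overall strategy is the same as the paper's: compute classes in the Grothendieck group, use that each block $\mathcal C^{-\omega(\gamma)}$ has a unique simple object, and pin down the multiplicity by a divisibility argument through $\phi_\alpha\psi_\alpha=T_\alpha$. Two points. First, in part a) the step ``$c_{\alpha,\alpha}=2$, not larger, forces $n=1$'' does not close as stated. From $(T_\alpha)^{K}[M_\alpha]=n\,(\phi_\alpha)^{K}[M_\alpha^-]$ you get that $n$ divides every coefficient of $(T_\alpha)^{K}[M_\alpha]$; the diagonal coefficient $c_{\alpha,\alpha}=2$ only gives $n\in\{1,2\}$, and $n=2$ is not yet excluded. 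The paper excludes it by using the off-diagonal coefficient $c_{\alpha,\beta}=1$ for $\beta$ adjacent to $\alpha$ (Lemma~\ref{Lp3}a), applicable since every simple root of a simple $\frak g$ of rank $\ge 2$ has a neighbour), whence $n\mid 1$. You do list Lemma~\ref{Lp3} among your inputs and use $c_{\alpha,\beta}=1$ in part b), so this is a one-line repair, but as written the argument for $n=1$ is incomplete. (Also, the identity $(\phi_\alpha)^K(\psi_\alpha)^K=(T_\alpha)^K$ is Corollary~\ref{Cf}b), not a).)

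Second, the passage from equality of classes to isomorphism that you worry about at length is automatic here and needs no semisimplicity or $\operatorname{Ext}$-vanishing: once $[\psi_\alpha M_\alpha]=[M_\alpha^-]$ with coefficient $1$, the module $\psi_\alpha M_\alpha$ has Jordan--H\"older length one (the classes of the simple objects form a basis of the Grothendieck group of a finite-length abelian category), hence is itself simple, hence isomorphic to the unique simple object $M_\alpha^-$ of its block; likewise for $\psi_\beta(\phi_\alpha M_\alpha^-)$ in part b). The appeal to Proposition~\ref{Pext} or to splitting of self-extensions is superfluous: there is only one composition factor, so there is nothing to extend. The paper's proof is exactly this combination of multiplicity one and uniqueness of the simple object.
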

\begin{proof} Part a). Category $\mathcal C^{-\omega(\alpha)}$ has the unique simple object $M_{\alpha}^-$ and thus $[\psi_\alpha M_\alpha]=c[M_{\alpha}^-]$ for some $c\in\mathbb Z_{>0}$. Next,
$$[T_\alpha M_\alpha]=[\phi_\alpha\psi_\alpha M_\alpha]=c[\phi_\alpha M_\alpha^-].$$
In particular, this implies that, for all $\beta\in\Pi$, $c_{\alpha,\beta}$ must be divisible by $c$. This together with Lemma~\ref{Lp3} implies that $c\mid 1$, and therefore that $c=1$.

Part b). It follows from part a) that $\phi_\alpha M_\alpha^-\cong T_\alpha M_\alpha$. According to Lemma~\ref{Lp3} we have that $c_{\alpha,\beta}=1$. Therefore $[\psi_\beta(\phi_\alpha M_\alpha^-)]=[M_\beta^-]$, and we have $\psi_\beta(\phi_\alpha M_\alpha^-)\cong M_\beta^-$.\end{proof}

\begin{proposition}\label{Pss}Categories $\mathcal C^{-\omega(\alpha)}$ are semisimple with a unique simple object.\end{proposition}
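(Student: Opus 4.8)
The plan is to show that each category $\mathcal C^{-\omega(\alpha)}$, which we already know has a unique simple object $M_\alpha^-$ (by Proposition~\ref{Pprw} and Proposition~\ref{Ppros}c)), is actually semisimple. Since a finite-length abelian category with a single simple object $S$ is semisimple precisely when $\operatorname{Ext}^1(S,S)=0$, it suffices to prove $\operatorname{Ext}^1_{\mathcal C^{-\omega(\alpha)}}(M_\alpha^-,M_\alpha^-)=0$. The key point is to relate $M_\alpha^-$, which lives in a $\frak g$-module category, back to the one-dimensional $\Ue(\frak g,e)$-module and to invoke Proposition~\ref{Pext}.

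First I would identify the $\Ue(\frak g,e)$-module corresponding to $M_\alpha^-$ under Skryabin's equivalence. The module $M_\alpha^-$ has associated variety $\bar O$ and central character $m_{-\omega(\alpha)}$; by the bijection $\mathcal P$ of Proposition~\ref{Pprw}, it corresponds to a simple finite-dimensional $\Ue(\frak g,e)$-module annihilated by $m_{-\omega(\alpha)}$, and Proposition~\ref{Ppros}c) says this is the unique such module. Now $\Ue(\frak g,e)$ has (for $\frak g$ not of type $A$) a unique one-dimensional module, namely $\Ue(\frak g,e)/\Ue(\frak g,e)(\frak g_e(0)\oplus\frak g_e(1))$, and its central character is $m_0$; translating it by the projective functor $\psi_\alpha$ (equivalently, by a translation functor to the wall) produces a simple module with central character $m_{-\omega(\alpha)}$ — by Lemma~\ref{Lmul}a), $\psi_\alpha M_\alpha\cong M_\alpha^-$, and by Theorem~\ref{T1}a) / Proposition~\ref{Pprw} the module $M_\alpha$ is the Skryabin image of the one-dimensional $\Ue(\frak g,e)$-module. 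So I would argue that on $\mathcal C^0$ and $\mathcal C^{-\omega(\alpha)}$ the functors $\psi_\alpha$ and $\phi_\alpha$ are mutually quasi-inverse equivalences: Lemma~\ref{Lmul}a) gives $\psi_\alpha M_\alpha\cong M_\alpha^-$ and $\phi_\alpha M_\alpha^-\cong T_\alpha M_\alpha$, while $\psi_\alpha$ kills every other simple $M_\beta$ (Lemma~\ref{Ltra-}); since $\psi_\alpha,\phi_\alpha$ are exact, faithful on the relevant blocks, and biadjoint, this should be enough to conclude they restrict to an equivalence between the block of $\mathcal C^0$ generated by $M_\alpha$ and all of $\mathcal C^{-\omega(\alpha)}$.

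Granting that equivalence, $\operatorname{Ext}^1_{\mathcal C^{-\omega(\alpha)}}(M_\alpha^-,M_\alpha^-)\cong\operatorname{Ext}^1(M_\alpha,M_\alpha)$ computed in the subcategory of $\mathcal C^0$ on which $T_\alpha$ acts as $2\,\mathrm{id}$, i.e. in the category of $\frak g$-modules equivalent (via Skryabin) to $\Ue(\frak g,e)$-modules with the corresponding properties; and $M_\alpha$ corresponds to the one-dimensional $\Ue(\frak g,e)$-module $M$, so $\operatorname{Ext}^1(M_\alpha,M_\alpha)=\operatorname{Ext}^1_{\Ue(\frak g,e)}(M,M)$, which vanishes by Proposition~\ref{Pext}. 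Alternatively, and perhaps more cleanly, I would transport $M_\alpha^-$ itself all the way back: Skryabin's equivalence identifies $\mathcal C^{-\omega(\alpha)}$ with a category of finite-dimensional $\Ue(\frak g,e)$-modules annihilated by a power of $m_{-\omega(\alpha)}$, and since that block has a unique simple object which is the image of a one-dimensional $\Ue(\frak g,e)$-module (namely $\psi_\alpha$ of the one-dimensional module, or directly the translate of $M$), the self-extension computation reduces to the annihilator argument in the proof of Proposition~\ref{Pext} verbatim: the two-sided ideal $\Ann M_\alpha^-$ is idempotent because it is generated by its degree-one part on which the bracket is surjective, forcing any self-extension to split.

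The main obstacle I anticipate is the rigorous verification that $\psi_\alpha$ and $\phi_\alpha$ genuinely give an equivalence of blocks rather than merely inducing isomorphisms on $K$-groups — one must check that the adjunction unit/counit are isomorphisms on the block of $\mathcal C^0$ containing $M_\alpha$, using the $K$-theoretic identities $(\psi_\alpha)^K(\phi_\alpha)^K=2\,\mathrm{Id}_{-\omega(\alpha)}$ and $(\phi_\alpha)^K(\psi_\alpha)^K=(T_\alpha)^K$ of Corollary~\ref{Cf}, together with the fact that on the $M_\alpha$-block $T_\alpha$ is (up to the $K$-group) multiplication by $2$. An alternative that sidesteps this is to run the idempotent-ideal argument of Proposition~\ref{Pext} directly inside the $\Ue(\frak g,e)$-module category: one needs that the annihilator of $M_\alpha^-$ in the relevant quotient of $\Ue(\frak g,e)$ is generated by a Lie-algebra-like piece that brackets onto itself, which follows from the structure of the block once we know its unique simple is one-dimensional; I would develop whichever of these two routes turns out to require less new input.
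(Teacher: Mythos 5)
Your reduction to $\operatorname{Ext}^1(M_\alpha^-,M_\alpha^-)=0$ is fine, but the two routes you propose for establishing that vanishing both break down. First, it is not true that each $M_\alpha$ (or each $M_\alpha^-$) is the Skryabin image of the one-dimensional $\Ue(\frak g,e)$-module: for $\frak g$ not of type $A$ there is a \emph{unique} one-dimensional $\Ue(\frak g,e)$-module, so Proposition~\ref{Pext} applies directly to exactly one of the $|\Pi|$ simple objects; the remaining simples correspond to higher-dimensional simple $\Ue(\frak g,e)$-modules (their dimensions are Goldie ranks), so neither Proposition~\ref{Pext} nor the idempotent-annihilator argument ``verbatim'' is available for them. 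Second, $\psi_\alpha$ and $\phi_\alpha$ are \emph{not} mutually quasi-inverse on any block: $\phi_\alpha M_\alpha^-=P_\alpha$ has composition factors $M_\alpha$ with multiplicity $2$ plus one $M_\beta$ for each adjacent $\beta$ (Lemmas~\ref{Lm},~\ref{Lp3}), and $(\psi_\alpha)^K(\phi_\alpha)^K=2\,Id_{-\omega(\alpha)}$ (Corollary~\ref{Cf}a)), so the unit/counit of the adjunction cannot be isomorphisms; moreover $\mathcal C^0$ is a single block (its endomorphism algebra is the connected zigzag algebra), so there is no separate ``$M_\alpha$-block'' to which $\psi_\alpha$ could restrict as an equivalence. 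The obstacle you flagged at the end is therefore not a technical verification to be completed but a statement that is false.

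The paper's argument instead runs an induction along the Dynkin diagram. The base case is the single node $\alpha_0$ for which the unique simple of $\mathcal C^{-\omega(\alpha_0)}$ comes from the one-dimensional $\Ue(\frak g,e)$-module, where Proposition~\ref{Pext} gives semisimplicity. For the inductive step, if $\mathcal C^{-\omega(\alpha)}$ is semisimple and $\beta$ is adjacent to $\alpha$, one uses $M_\beta^-\cong\psi_\beta\phi_\alpha M_\alpha^-$ (Lemma~\ref{Lmul}b)) and the biadjointness of the translation functors to get
$$\operatorname{Hom}(M_\beta^-,M)\cong\operatorname{Hom}(\psi_\beta\phi_\alpha M_\alpha^-,M)\cong\operatorname{Hom}(M_\alpha^-,\psi_\alpha\phi_\beta M);$$
since $\psi_\alpha\phi_\beta$ is exact and $M_\alpha^-$ is projective in the semisimple category $\mathcal C^{-\omega(\alpha)}$, the right-hand side is exact in $M$, so $M_\beta^-$ is projective in $\mathcal C^{-\omega(\beta)}$, which has $M_\beta^-$ as its unique simple; hence $\mathcal C^{-\omega(\beta)}$ is semisimple. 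Note that this uses the translation functors only through exactness and adjunction, never as an equivalence. If you want to salvage your write-up, you should replace the claimed equivalence by this propagation-of-projectivity argument and restrict the appeal to Proposition~\ref{Pext} to the single base node.
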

\begin{proof}The statement of Proposition~\ref{Pss} for $\alpha=\alpha_0$ holds thanks to Proposition~\ref{Pext}. Due to the fact that $\frak g$ is simple, it is enough to show that if the statement of Proposition~\ref{Pss} holds for $\alpha\in\Pi$ and $\beta$ is adjacent to $\alpha$, then the statement of Proposition~\ref{Pss} holds for $\beta$.

Thus we assume that $\alpha$ and $\beta$ are adjacent roots and $\mathcal C^{-\omega(\alpha)}$ is semisimple and has a unique up to isomorphism simple object $M_\alpha^-$. Thanks to Proposition~\ref{Ppros}c) and Proposition~\ref{Pprw}, it is enough to show that $M_\beta^-$ is projective in $\mathcal C^{-\omega(\beta)}$.

We have
$$\operatorname{Hom}_{\Ue(\frak g)}(M_\beta^-, M)\cong \operatorname{Hom}_{\Ue(\frak g)}(\psi_\beta\phi_\alpha M_\alpha^-, M)\cong \operatorname{Hom}_{\Ue(\frak g)}(M_\alpha^-, \psi_\alpha\phi_\beta M)$$
for an object $M$ of $\mathcal C^{-\omega(\beta)}.$ Then the fact that $\psi_\alpha, \phi_\beta$ are exact immediately implies that $M_{\beta}^-$ is projective.
\end{proof}

\begin{corollary}\label{Cmain} Put $P_\alpha:=\phi_\alpha M_\alpha^-$. For any object $M$ of $\mathcal C^0$ we have $$\dim \operatorname{Hom}_{\Ue(\frak g)}(M, P_\alpha)=\dim \operatorname{Hom}_{\Ue(\frak g)}(P_\alpha, M)$$ and equals to the Jordan-H\"older multiplicity of $M_\alpha$ in $M$. In particular, $P_\alpha$ is both projective and injective.\end{corollary}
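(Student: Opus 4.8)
The plan is to derive everything formally from the biadjointness of the functors $\psi_\alpha$ and $\phi_\alpha$ together with the semisimplicity of $\mathcal C^{-\omega(\alpha)}$ established in Proposition~\ref{Pss}. Recall that $\psi_\alpha$ and $\phi_\alpha$ are projective functors, hence exact, and that both $(\psi_\alpha,\phi_\alpha)$ and $(\phi_\alpha,\psi_\alpha)$ are adjoint pairs --- this is precisely the biadjointness already exploited in the proof of Proposition~\ref{Pss}. First I would compute the two Hom spaces by adjunction. For an object $M$ of $\mathcal C^0$ one has
$$\operatorname{Hom}_{\Ue(\frak g)}(M,P_\alpha)=\operatorname{Hom}_{\Ue(\frak g)}(M,\phi_\alpha M_\alpha^-)\cong\operatorname{Hom}_{\Ue(\frak g)}(\psi_\alpha M,M_\alpha^-),$$
$$\operatorname{Hom}_{\Ue(\frak g)}(P_\alpha,M)=\operatorname{Hom}_{\Ue(\frak g)}(\phi_\alpha M_\alpha^-,M)\cong\operatorname{Hom}_{\Ue(\frak g)}(M_\alpha^-,\psi_\alpha M).$$
Since $\mathcal C^{-\omega(\alpha)}$ is semisimple with the single simple object $M_\alpha^-$ by Proposition~\ref{Pss}, we have $\psi_\alpha M\cong (M_\alpha^-)^{\oplus k}$, where $k$ is the length of $\psi_\alpha M$, and both Hom spaces displayed above have dimension $k$; this already yields the asserted equality of dimensions.

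Next I would identify $k$ with the Jordan--H\"older multiplicity of $M_\alpha$ in $M$. Because $\psi_\alpha$ is exact, $k$ equals the sum, over the composition factors $S$ of $M$ counted with multiplicity, of the lengths of $\psi_\alpha S$. The composition factors of an object of $\mathcal C^0$ are among the $M_\beta$, $\beta\in\Pi$; by Lemma~\ref{Ltra-} we have $\psi_\alpha M_\beta=0$ for $\beta\ne\alpha$, while by Lemma~\ref{Lmul}a) $\psi_\alpha M_\alpha\cong M_\alpha^-$ has length $1$. Hence $k$ equals the multiplicity of $M_\alpha$ in $M$, which finishes the dimension statement.

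Finally, the projectivity and injectivity of $P_\alpha$ in $\mathcal C^0$ follow from the functorial isomorphisms $\operatorname{Hom}_{\Ue(\frak g)}(P_\alpha,-)\cong\operatorname{Hom}_{\Ue(\frak g)}(M_\alpha^-,\psi_\alpha(-))$ and $\operatorname{Hom}_{\Ue(\frak g)}(-,P_\alpha)\cong\operatorname{Hom}_{\Ue(\frak g)}(\psi_\alpha(-),M_\alpha^-)$: in each case the right-hand side is the composite of the exact functor $\psi_\alpha$ with a covariant (resp.\ contravariant) Hom functor into or out of $M_\alpha^-$, and the latter is exact because $\mathcal C^{-\omega(\alpha)}$ is semisimple. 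As the argument is purely formal, I do not expect a genuine obstacle: the only substantive inputs are the biadjointness of $\psi_\alpha$ and $\phi_\alpha$ and the semisimplicity of $\mathcal C^{-\omega(\alpha)}$ from Proposition~\ref{Pss}, both already available, so the corollary should be a short and direct consequence. The one point requiring a little care is bookkeeping --- making sure one invokes the correct one of the two adjunctions in each of the two computations --- but this is exactly the pattern used in the proof of Proposition~\ref{Pss}.
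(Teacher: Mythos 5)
Your argument is correct and is essentially the paper's own proof: both rest on the two adjunctions between $\psi_\alpha$ and $\phi_\alpha$, the semisimplicity of $\mathcal C^{-\omega(\alpha)}$ from Proposition~\ref{Pss}, and Lemmas~\ref{Lmul} and~\ref{Ltra-} to identify the resulting dimension with the multiplicity of $M_\alpha$ in $M$. You merely spell out more explicitly the counting of $\psi_\alpha M\cong(M_\alpha^-)^{\oplus k}$ and the exactness argument for projectivity and injectivity, which the paper leaves implicit.
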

\begin{proof} Fix an object $M$ of $\Ue(\frak g)-f.d.mod^{m_0}$. We have
$$\operatorname{Hom}_{\Ue(\frak g)}(M_\alpha^-, \psi_\alpha M)\cong \operatorname{Hom}_{\Ue(\frak g)}(\phi_\alpha M_\alpha^-, M)\cong \operatorname{Hom}_{\Ue(\frak g)}(P_\alpha, M),$$
$$\operatorname{Hom}_{\Ue(\frak g)}(\psi_\alpha M, M_\alpha^-)\cong \operatorname{Hom}_{\Ue(\frak g)}(M, \phi_\alpha M_\alpha^-)\cong \operatorname{Hom}_{\Ue(\frak g)}(M, P_\alpha).$$
First two numbers in both rows equal to the multiplicity of $M_\alpha$ in $M$ thanks to Proposition~\ref{Pss}, Lemma~\ref{Lmul} and Lemma~\ref{Ltra-}. 
\end{proof}

\begin{proposition}\label{Pmain} Module $\oplus_{\alpha\in\Pi}P_\alpha$ is a faithfully projective object of $\mathcal C^0$, see~\cite[Chapter~II]{B}. In particular, $\mathcal C^0$ is equivalent to the category of left finite-dimensional $\operatorname{End}_{\Ue(\frak g)}(\oplus_{\alpha\in\Pi}P_\alpha)$-modules. %Algebra $\operatorname{End}_{\Ue(\frak g)}(\oplus_{\alpha\in\Pi}P_\alpha)$ is self-injective (*).
\end{proposition}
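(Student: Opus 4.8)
The plan is to show that $P:=\oplus_{\alpha\in\Pi}P_\alpha$ is a progenerator (faithfully projective object) of $\mathcal C^0$ and then to invoke the standard reconstruction theorem of Morita theory. Recall the structural facts about $\mathcal C^0$ already at our disposal: it is abelian, every object has finite length, the isomorphism classes of simple objects are exactly $\{M_\alpha\}_{\alpha\in\Pi}$ (Propositions~\ref{Pprw} and~\ref{Ppros}), and all $\operatorname{Hom}$-spaces are finite-dimensional over $\mathbb F$ (by Corollary~\ref{Cmain} the number $\dim\operatorname{Hom}_{\Ue(\frak g)}(P_\alpha,M)$ is finite for every finite-length $M$, and each $P_\beta$ has finite length since every object of $\mathcal C$ does).

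First I would record three ingredients. (i) $P$ is projective, being a finite direct sum of the objects $P_\alpha$, which are projective by Corollary~\ref{Cmain}. (ii) For all $\alpha,\beta\in\Pi$ one has $\dim\operatorname{Hom}_{\Ue(\frak g)}(P_\alpha,M_\beta)=[M_\beta:M_\alpha]=\delta_{\alpha\beta}$, again by Corollary~\ref{Cmain}; hence the head of $P_\alpha$ is the simple object $M_\alpha$, so the canonical surjection $P_\alpha\twoheadrightarrow M_\alpha$ is a projective cover. (iii) Consequently every simple object of $\mathcal C^0$ has a projective cover among the $P_\alpha$, and, since every object of $\mathcal C^0$ has finite length, an induction on length shows that every object is a quotient of a finite direct sum of copies of the $P_\alpha$. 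Thus $P$ is a generator, and being also projective it is faithfully projective in the sense of~\cite[Chapter~II]{B}.

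Next I would set $A:=\operatorname{End}_{\Ue(\frak g)}(P)$, which by the $\operatorname{Hom}$-finiteness above is a finite-dimensional $\mathbb F$-algebra, and apply the reconstruction theorem for a progenerator: the functor $\operatorname{Hom}_{\Ue(\frak g)}(P,-)$ (with the $A$-module structure as in~\cite[Chapter~II]{B}, so that one lands in left finite-dimensional $A$-modules) is exact because $P$ is projective, faithful because $P$ is a generator, fully faithful on the additive subcategory generated by $P$ because $\operatorname{End}(P)=A$, and essentially surjective because every finite-dimensional $A$-module is a cokernel of a map of finitely generated free $A$-modules, which corresponds under the functor to a cokernel of a map of finite sums of the $P_\alpha$ in $\mathcal C^0$. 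Assembling these gives the asserted equivalence $\mathcal C^0\simeq A\mbox{-}f.d.mod$.

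I do not expect a genuine obstacle: once Corollary~\ref{Cmain} is in hand the argument is the standard one that a finite-length category with finitely many simples, all admitting finite-length projective covers with finite-dimensional endomorphism algebras, is the module category of a finite-dimensional algebra. The only point deserving care is step (ii)--(iii), namely that the $P_\alpha$ are the projective covers of the $M_\alpha$ and hence that $\mathcal C^0$ has enough projectives; this is precisely what fails for $\frak{sl}(n)$ (as noted in the introduction), and here it rests squarely on the multiplicity identity $\dim\operatorname{Hom}_{\Ue(\frak g)}(P_\alpha,M)=[M:M_\alpha]$ of Corollary~\ref{Cmain}.
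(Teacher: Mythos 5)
Your proposal is correct and follows essentially the same route as the paper: both arguments amount to showing that $\oplus_{\alpha\in\Pi}P_\alpha$ is a progenerator of $\mathcal C^0$ using Corollary~\ref{Cmain} and then invoking the standard reconstruction theorem (the paper cites~\cite[Chapter~II, Theorem~1.3]{B}, checking exactness, preservation of coproducts and faithfulness, i.e.\ $\operatorname{Hom}(\oplus_\alpha P_\alpha, M)\ne0$ for all nonzero $M$). Your verification of the generator property via projective covers and induction on length is just a more explicit, self-contained version of the paper's faithfulness check, so the two proofs coincide in substance.
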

\begin{proof}All objects of $\mathcal C^0$ are of finite length and it is clear that $\operatorname{Hom}_{\mathcal C^0}(\oplus_{\alpha\in\Pi}P_\alpha, \cdot)$ is an exact functor which preserves arbitrary coproducts, i.e. direct sums in $\mathcal C^0$. This together with~\cite[Chapter~II,~Theorem~1.3]{B} implies that it is enough to show that $\oplus_{\alpha\in\Pi}P_\alpha$ is faithful, i.e. that for any object $M$ of $\mathcal C^0$ we have
$$\operatorname{Hom}_{\Ue(\frak g)}(\oplus_{\alpha\in\Pi}P_\alpha, M)\ne0.$$
This follows from Corollary~\ref{Cmain}.
\end{proof}
Put $A(\frak g):=\operatorname{End}_{\Ue(\frak g)}(\oplus_{\alpha\in\Pi}P_\alpha)$.
%\begin{proof}[Proof of Theorem~\ref{T1}]Is implied by  Proposition~\ref{Pprw}, Lemma~\ref{Lprr}, and Proposition~\ref{Pmain}.\end{proof}
\subsection{Basis-dependent description of $A(\frak g)$.}
The goal of this subsection is to provide a convenient basis for the algebra $A(\frak g)$, see Theorem~\ref{Talg}. To do this we work out a complete set of linearly independent elements together with a multiplication rules for the elements of this set. In short, we have $4r-2$ basis elements where $r$ is the rank of $\frak g$, and the product of two elements of the basis is equal to either 0 or an element of the basis. It can be easily seen from this presentation that $A(\frak g)\cong A(\Gamma)$ where $\Gamma$ is a Dynkin diagram of $\frak g$ and $A(\Gamma)$ is a zigzag algebra attached to $\Gamma$, see~\cite{HK}. Alltogether this will complete the proof of Theorem~\ref{T1}b).
\begin{theorem}\label{Talg}There exists a basis $\underline \pi_\alpha, \underline \pi_\alpha^0~(\alpha\in\Pi)$, $\underline \varphi_{\alpha\beta}~(\alpha, \beta\in\Pi, \alpha$ and $\beta$ are adjacent) of $A(\frak g)$ such that

$$\underline \pi_\alpha \underline\pi_\beta=\begin{cases}\underline\pi_\alpha\hspace{10pt}~\mbox{~if~}\alpha=\beta\\0\hspace{10pt}\mbox{otherwise}\end{cases},\hspace{10pt}\underline \pi_\alpha \underline\pi_\beta^0=\underline \pi_\beta^0 \underline\pi_\alpha=\begin{cases}\underline\pi_\beta^0\hspace{10pt}~\mbox{~if~}\alpha=\beta\\0\hspace{10pt}\mbox{otherwise}\end{cases},$$
$$\underline \pi_\alpha \underline\varphi_{\beta\gamma}=\begin{cases}\underline\varphi_{\beta\gamma}\hspace{10pt}~\mbox{~if~}\alpha=\gamma\\0\hspace{10pt}\mbox{otherwise}\end{cases},\hspace{10pt}\underline \varphi_{\beta\gamma}\underline\pi_\alpha=\begin{cases}\underline\varphi_{\beta\gamma}\hspace{10pt}~\mbox{~if~}\alpha=\beta\\0\hspace{10pt}\mbox{otherwise}\end{cases}(\beta\mbox{~and~}\gamma\mbox{~are~adjacent}),$$
$$\underline \varphi_{\gamma\tau}\underline\varphi_{\alpha\beta}=\begin{cases}\underline\pi_{\alpha}^0\hspace{10pt}~\mbox{~if~}\alpha=\tau\mbox{~and~}\beta=\gamma\\0\hspace{10pt}\mbox{otherwise}\end{cases}~(\begin{tabular}{c}$\alpha$ and $\beta$ are adjacent\\$\gamma$ and $\tau$ are adjacent\end{tabular}),$$
$$\underline \pi_\alpha^0 \underline\pi_{\beta}^0=\underline \pi_\alpha^0 \underline\varphi_{\beta\gamma}=\underline\varphi_{\beta\gamma}\underline \pi_\alpha^0=0\hspace{10pt}(\beta\mbox{~and~}\gamma\mbox{~are~adjacent})$$
for all $\alpha, \beta, \gamma, \tau\in\Pi$.
\end{theorem}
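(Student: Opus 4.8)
The goal is to produce an explicit basis of $A(\frak g)=\operatorname{End}_{\Ue(\frak g)}(\oplus_{\alpha\in\Pi}P_\alpha)$ and to read off the multiplication. The starting point is the decomposition $\operatorname{End}_{\Ue(\frak g)}(\oplus_\alpha P_\alpha)=\oplus_{\alpha,\beta}\operatorname{Hom}_{\Ue(\frak g)}(P_\beta,P_\alpha)$, so I would first compute the dimension of each Hom-space $\operatorname{Hom}_{\Ue(\frak g)}(P_\beta,P_\alpha)$ using Corollary~\ref{Cmain}: since $\dim\operatorname{Hom}_{\Ue(\frak g)}(M,P_\alpha)$ equals the Jordan--H\"older multiplicity $[M:M_\alpha]$, it suffices to compute $[P_\beta:M_\alpha]$. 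From $P_\beta=\phi_\beta M_\beta^-$ and $[T_\beta M_\beta]=[\phi_\beta M_\beta^-]$ (Lemma~\ref{Lmul}a) together with the multiplicity formulas $c_{\beta,\beta}=2$ (Lemma~\ref{Lm}) and $c_{\beta,\gamma}=1$ for $\gamma$ adjacent to $\beta$, $=0$ otherwise (Lemma~\ref{Lp3}), we get $[P_\beta:M_\beta]=2$, $[P_\beta:M_\gamma]=1$ for $\gamma$ adjacent, and $0$ otherwise. Hence $\dim\operatorname{Hom}(P_\beta,P_\alpha)$ is $2$ if $\alpha=\beta$, $1$ if $\alpha,\beta$ are adjacent, and $0$ otherwise, giving the total $\sum_\alpha 2+\sum_{\text{adjacent }(\alpha,\beta)}1=2r+2(r-1)=4r-2$, matching the claimed count $2r+(r$ pairs $\pi_\alpha,\pi_\alpha^0)+2(r-1)$ edge maps.

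Next I would name the basis elements. For each $\alpha$, the two-dimensional algebra $\operatorname{End}(P_\alpha)$ has the identity; since $P_\alpha$ is indecomposable (it is projective with simple top $M_\alpha$, being the projective cover, by the usual projective-cover argument from Corollary~\ref{Cmain}), $\operatorname{End}(P_\alpha)$ is local, so it is $\mathbb F[\underline\pi_\alpha^0]/(\underline\pi_\alpha^0)^2$ with $\underline\pi_\alpha:=\mathrm{id}_{P_\alpha}$ and $\underline\pi_\alpha^0$ spanning the radical; the element $\underline\pi_\alpha^0$ is the composite $P_\alpha\twoheadrightarrow M_\alpha\hookrightarrow P_\alpha$ (nonzero because $P_\alpha$ is also injective, again by Corollary~\ref{Cmain}, and $[P_\alpha:M_\alpha]=2$ with simple socle $M_\alpha$). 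For adjacent $\alpha,\beta$, I would fix a nonzero $\underline\varphi_{\alpha\beta}\in\operatorname{Hom}(P_\alpha,P_\beta)$ — one convenient choice is the map adjoint under $(\phi_\beta,\psi_\beta)$-adjunction to the isomorphism $\psi_\beta P_\alpha=\psi_\beta\phi_\alpha M_\alpha^-\cong M_\beta^-$ of Lemma~\ref{Lmul}b, equivalently the composite $P_\alpha\twoheadrightarrow M_\beta\hookrightarrow P_\beta$ through the copy of $M_\beta$ in the heart of $P_\alpha$. The orthogonality relations $\underline\pi_\alpha\underline\pi_\beta=\delta_{\alpha\beta}\underline\pi_\alpha$ and the identity-like relations $\underline\pi_\gamma\underline\varphi_{\beta\gamma}=\underline\varphi_{\beta\gamma}=\underline\varphi_{\beta\gamma}\underline\pi_\beta$, $\underline\pi_\alpha\underline\varphi_{\beta\gamma}=0$ for $\alpha\neq\gamma$, etc., are then automatic from $\underline\pi_\alpha=\mathrm{id}_{P_\alpha}$ and the block decomposition.

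**The substantive computations.** What remains is: (i) $\underline\pi_\alpha^0$ lies in the radical so $(\underline\pi_\alpha^0)^2=0$, and $\underline\pi_\alpha^0\underline\varphi_{\beta\gamma}=\underline\varphi_{\beta\gamma}\underline\pi_\alpha^0=0$ — these follow because any such composite factors through a Hom-space landing in the socle and then re-entering, but the socle of $P_\alpha$ is $M_\alpha$ while $\underline\varphi$ kills/does not hit it appropriately; more cleanly, $\operatorname{Hom}(P_\alpha,P_\alpha)\cdot\operatorname{rad}$ lands in $\operatorname{rad}^2=0$ since $A(\frak g)$ is a radical-square-zero algebra (the Loewy length of each $P_\alpha$ is $2$, as $[P_\alpha:M_\alpha]=2$ with simple top and simple socle and one adjacent composition factor in between at Loewy layer $1$). (ii) The key relation $\underline\varphi_{\gamma\tau}\underline\varphi_{\alpha\beta}=\delta_{\alpha\tau}\delta_{\beta\gamma}\,\underline\pi_\alpha^0$: the composite $P_\alpha\xrightarrow{\underline\varphi_{\alpha\beta}}P_\beta\xrightarrow{\underline\varphi_{\beta\alpha}}P_\alpha$ lies in $\operatorname{End}(P_\alpha)$ and is non-invertible (it factors through the smaller module image of $\underline\varphi_{\alpha\beta}$, which is not all of $P_\alpha$), hence lands in the radical $\mathbb F\underline\pi_\alpha^0$; the real content is that it is \emph{nonzero}, which I would get from adjunction: under $(\phi_\beta,\psi_\beta)$ it corresponds to a nonzero element of $\operatorname{Hom}(\psi_\beta P_\alpha,\psi_\beta P_\alpha)=\operatorname{Hom}(M_\beta^-,M_\beta^-)=\mathbb F$ by Lemma~\ref{Lmul}b and Proposition~\ref{Pss} (semisimplicity of $\mathcal C^{-\omega(\beta)}$), and tracing the unit/counit of the adjunction shows this element is the identity, hence the composite is a nonzero (rescalable) multiple of $\underline\pi_\alpha^0$; after rescaling the $\underline\varphi$'s we may take it to be exactly $\underline\pi_\alpha^0$. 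When $\beta\neq\gamma$ (i.e. the two edges do not compose head-to-tail) the product lands in $\operatorname{Hom}(P_\alpha,P_\tau)$ and factors through $M_\beta$ then $M_\gamma$ with $\beta\neq\gamma$, hence is zero by the multiplicity bookkeeping (no such composition factor is available), giving the $0$ case. Finally (iii) $\underline\pi_\alpha^0\underline\pi_\beta^0=0$ for all $\alpha,\beta$ including $\alpha=\beta$: immediate from $\operatorname{rad}^2=0$ for $\alpha=\beta$, and from the block decomposition (different idempotents) for $\alpha\neq\beta$.

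**Main obstacle.** The one genuinely non-formal point is establishing that $\underline\varphi_{\beta\alpha}\underline\varphi_{\alpha\beta}\neq 0$ — that the two edge maps between adjacent vertices do not compose to zero — since everything else is dimension count plus radical-square-zero formalism. I would handle this via the $(\phi_\alpha,\psi_\alpha)$-adjunction and the semisimplicity Proposition~\ref{Pss}: the composite corresponds under adjunction to a scalar on the one-dimensional space $\operatorname{End}(M_\alpha^-)$, and one must check this scalar is the \emph{identity}, not zero, by chasing the triangle identities for the adjunction (using that the unit $M_\alpha^-\to\psi_\alpha\phi_\alpha M_\alpha^-$ is split injective by semisimplicity, hence, being a map between a simple and a module with a unique copy of that simple — $\psi_\alpha\phi_\alpha M_\alpha^-\cong\psi_\alpha T_\alpha^{-1}\cdots$, more simply $\psi_\alpha P_\alpha\cong M_\alpha^-$ by Lemma~\ref{Lmul}a — it is an isomorphism). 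Once this single nonvanishing is in hand, rescaling the basis elements $\underline\varphi_{\alpha\beta}$ turns all the structure constants into $0$ and $1$ and the isomorphism $A(\frak g)\cong A(\Gamma)$ with the Huerfano--Khovanov zigzag algebra is a direct comparison of the presentations, completing the proof of Theorem~\ref{T1}b).
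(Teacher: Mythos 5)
Your skeleton (the Hom--space dimension count via Corollary~\ref{Cmain}, the choice of $\pi_\alpha=\mathrm{id}$, $\pi^0_\alpha=(P_\alpha\twoheadrightarrow M_\alpha\hookrightarrow P_\alpha)$, $\varphi_{\alpha\beta}$ spanning the one-dimensional edge spaces, and a final rescaling) is the same as the paper's, but two of your justifications fail. First, $A(\frak g)$ is \emph{not} radical-square-zero and the Loewy length of $P_\alpha$ is $3$, not $2$: its layers are $M_\alpha$ (top), $\oplus_{\beta\ \mathrm{adjacent}\ \mathrm{to}\ \alpha}M_\beta$, $M_\alpha$ (socle), and the relation $\underline\varphi_{\beta\alpha}\underline\varphi_{\alpha\beta}=\underline\pi^0_\alpha\neq0$ that you yourself need is a nonzero product of two radical elements, so your ``more cleanly, $\mathrm{rad}^2=0$'' argument is internally inconsistent and proves nothing. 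The relations it was meant to cover are true, but they need the arguments of Lemmas~\ref{Lend} and~\ref{Lpra}a),b): $\varphi_{\alpha\beta}\pi^0_\alpha=0$ because the image of $\pi^0_\alpha$ is the socle $M_\alpha$ and $\operatorname{Hom}_{\Ue(\frak g)}(M_\alpha,P_\beta)=0$, while $\pi^0_\beta\varphi_{\alpha\beta}=0$ requires showing that $\varphi_{\alpha\beta}$ is \emph{not surjective}, which the paper gets from projectivity of $P_\beta$ together with the dimension count ($1\geq 2$ contradiction); your sketch (``kills/does not hit it appropriately'') does not supply that step.

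Second, and more seriously, your resolution of the point you correctly single out as the main obstacle, $\underline\varphi_{\beta\alpha}\underline\varphi_{\alpha\beta}\neq0$, does not work as written. You invoke ``$\psi_\alpha P_\alpha\cong M_\alpha^-$ by Lemma~\ref{Lmul}a'', but that lemma concerns $\psi_\alpha M_\alpha$; since $(\psi_\alpha)^K(\phi_\alpha)^K=2\,Id_{-\omega(\alpha)}$ (Corollary~\ref{Cf}a) and $\mathcal C^{-\omega(\alpha)}$ is semisimple, in fact $\psi_\alpha P_\alpha\cong M_\alpha^-\oplus M_\alpha^-$, so the unit $M_\alpha^-\to\psi_\alpha\phi_\alpha M_\alpha^-$ is a split injection but not an isomorphism. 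Moreover, the composite $P_\alpha\to\phi_\beta\psi_\beta P_\alpha\to P_\alpha$ you want to analyze is the unit of the adjunction $(\psi_\beta,\phi_\beta)$ followed by the counit of the \emph{other} adjunction $(\phi_\beta,\psi_\beta)$; no triangle identity relates these, so ``tracing the unit/counit'' cannot rule out that the resulting scalar is zero --- which is precisely what must be proved. The paper's Lemma~\ref{Lpra}c) does this by a direct module-theoretic argument: if $\varphi_{\alpha\beta}\varphi_{\beta\alpha}=0$, then $\operatorname{Im}\varphi_{\beta\alpha}\subset\ker\varphi_{\alpha\beta}$, yielding a nonzero map $P_\alpha/\operatorname{Im}\varphi_{\beta\alpha}\to P_\beta$; since $M_\beta$ occurs in the head of $\operatorname{Im}\varphi_{\beta\alpha}$ (a quotient of $P_\beta$) and also in the image of the induced map (which contains the socle $M_\beta$ of $P_\beta$), one gets $[P_\alpha:M_\beta]\geq2$, contradicting multiplicity $1$. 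You need this, or an equally concrete, argument. Finally, a smaller slip: rescaling only the $\underline\varphi$'s cannot make both $\varphi_{\beta\alpha}\varphi_{\alpha\beta}=\pi^0_\alpha$ and $\varphi_{\alpha\beta}\varphi_{\beta\alpha}=\pi^0_\beta$ hold, because the ratio of the two structure constants is unchanged by such rescalings; one must also renormalize the $\pi^0_\alpha$'s, which is exactly what the paper's products of constants along paths from $\alpha_0$ in the (tree-shaped) Dynkin diagram accomplish.
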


%It seems reasonable to encode this data as a quiver with relations.
To start with we compute $\dim \operatorname{Hom}_{\Ue(\frak g)}(P_\alpha, P_\beta)$.
\begin{lemma}\label{Ldimh} For $\alpha, \beta\in\Pi$ we have

a) $\dim \operatorname{Hom}_{\Ue(\frak g)}(P_\alpha, P_\alpha)=2$,

b) $\dim \operatorname{Hom}_{\Ue(\frak g)}(P_\alpha, P_\beta)=1$, if $\alpha$ and $\beta$ are adjacent,

c) $\dim \operatorname{Hom}_{\Ue(\frak g)}(P_\alpha, P_\beta)=0$, if $\alpha\ne\beta$ are not adjacent.\end{lemma}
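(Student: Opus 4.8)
The idea is that all three dimensions can be read off from the multiplicity statement in Corollary~\ref{Cmain}, once one knows the Jordan--H\"older content of $P_\alpha=\phi_\alpha M_\alpha^-$ as an object of $\mathcal C^0$. By Corollary~\ref{Cmain} applied with $M=P_\beta$ we have
$$\dim\operatorname{Hom}_{\Ue(\frak g)}(P_\alpha,P_\beta)=[P_\beta:M_\alpha],$$
the multiplicity of the simple object $M_\alpha$ in $P_\beta$; and symmetrically $\dim\operatorname{Hom}(P_\beta,P_\alpha)=[P_\alpha:M_\beta]$. So everything reduces to computing $[P_\beta:M_\alpha]=[\phi_\beta M_\beta^-:M_\alpha]$ for all pairs $\alpha,\beta$. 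First I would pass to the Grothendieck group: $[P_\beta]=[\phi_\beta M_\beta^-]$, and by Lemma~\ref{Lmul}a) we have $M_\beta^-\cong\psi_\beta M_\beta$ in $K(\mathcal C^{-\omega(\beta)})$ (that category being semisimple with the single simple $M_\beta^-$, by Proposition~\ref{Pss}), hence $[P_\beta]=[\phi_\beta\psi_\beta M_\beta]=(T_\beta)^{K(\mathcal C)}[M_\beta]=\sum_\gamma c_{\beta,\gamma}[M_\gamma]$.

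Now I invoke the explicit values of the structure constants $c_{\beta,\gamma}$ already established: $c_{\beta,\beta}=2$ (Lemma~\ref{Lm}); $c_{\beta,\gamma}=c_{\gamma,\beta}=1$ when $\beta$ and $\gamma$ are adjacent (Lemma~\ref{Lp3}a)); and $c_{\beta,\gamma}=0$ when $\gamma\neq\beta$ are non-adjacent (Lemma~\ref{Lp3}b)). Therefore $[P_\beta:M_\alpha]=c_{\beta,\alpha}$ equals $2$ if $\alpha=\beta$, equals $1$ if $\alpha$ and $\beta$ are adjacent, and equals $0$ if $\alpha\neq\beta$ are non-adjacent. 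Feeding this back through Corollary~\ref{Cmain} gives exactly statements a), b), c). Note the computation is manifestly symmetric in $\alpha,\beta$ (consistent with the two displayed $\operatorname{Hom}$-spaces in Corollary~\ref{Cmain} having equal dimension), so no separate argument is needed for $\operatorname{Hom}(P_\beta,P_\alpha)$.

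The only genuinely delicate point is the step $[\phi_\beta M_\beta^-]=(T_\beta)^{K(\mathcal C)}[M_\beta]$: one must be sure that $\phi_\beta$ and $\psi_\beta$ are being applied within the categories $\mathcal C^{-\omega(\beta)}$ and $\mathcal C^0$ where the identities $T_\beta=\phi_\beta\psi_\beta$ and the semisimplicity of $\mathcal C^{-\omega(\beta)}$ are available — this is precisely where Proposition~\ref{Pss} and the exactness of projective functors are used, so that passing to $K$-groups loses nothing. Everything else is bookkeeping with the already-known $c_{\alpha,\beta}$, so I expect no further obstacle.
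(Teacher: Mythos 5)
Your proof is correct and follows the same route as the paper: the paper's own proof of Lemma~\ref{Ldimh} is exactly the one-line citation of Corollary~\ref{Cmain}, Lemma~\ref{Lm} and Lemma~\ref{Lp3}, and your expansion (identifying $[P_\beta]=[T_\beta M_\beta]=\sum_\gamma c_{\beta,\gamma}[M_\gamma]$ via Lemma~\ref{Lmul}a) and then reading off the multiplicities) is precisely the intended chain of reasoning.
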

\begin{proof}Is implied by Corollary~\ref{Cmain}, Lemma~\ref{Lm}, Lemma~\ref{Lp3}.\end{proof}
Next, we fix an element $\varphi_{\alpha\beta}\in \operatorname{Hom}_{\Ue(\frak g)}(P_\alpha, P_\beta)$ for the pair of adjacent roots $\alpha, \beta$ (this element is unique up to scaling). We have that $\dim \operatorname{Hom}_{\Ue(\frak g)}(M_\alpha, P_\alpha)=\dim \operatorname{Hom}_{\Ue(\frak g)}(P_\alpha, M_\alpha)=1$. Thus the composition of nonzero morphisms$$P_\alpha\to M_\alpha\to P_\alpha$$is unique up to scaling and we denote by $\pi^0_\alpha$ one such a composition. We denote by $\pi_\alpha$ the identity morphism on $P_\alpha$. %We show next that$$\varphi_{\alpha\beta}, \pi_\alpha, \pi_{\alpha}^0$$is a basis of $\operatorname{End}_{\Ue(\frak g)}(\oplus_{\alpha} P_\alpha)$. This follows from the lemma below.

The following proposition is a first approximation to Theorem~\ref{Talg}.
\begin{proposition}\label{Pbas}The elements $\pi_\alpha, \pi_\alpha^0~(\alpha\in\Pi)$ and $\varphi_{\alpha\beta}~(\alpha, \beta\in\Pi$, $\alpha$ and $\beta$ are adjacent) are a basis of $A(\frak g)$.\end{proposition}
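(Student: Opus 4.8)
The plan is to reduce Proposition~\ref{Pbas} to a dimension count, feeding on the structural facts about the modules $P_\alpha$ already recorded in Lemma~\ref{Ldimh} and Corollary~\ref{Cmain}. First I would decompose $A(\frak g)=\operatorname{End}_{\Ue(\frak g)}(\oplus_{\alpha\in\Pi}P_\alpha)$ into its blocks $\operatorname{Hom}_{\Ue(\frak g)}(P_\alpha,P_\beta)$, $\alpha,\beta\in\Pi$. By Lemma~\ref{Ldimh} the diagonal blocks are $2$-dimensional, the blocks for adjacent $\alpha\ne\beta$ are $1$-dimensional, and the remaining ones vanish. As the Dynkin diagram $\Gamma$ is a tree, it has $|\Pi|-1$ edges, i.e. $2(|\Pi|-1)$ ordered adjacent pairs, whence $\dim A(\frak g)=2|\Pi|+2(|\Pi|-1)=4|\Pi|-2$. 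The proposed family consists of $|\Pi|$ elements $\pi_\alpha$, $|\Pi|$ elements $\pi_\alpha^0$ and $2(|\Pi|-1)$ elements $\varphi_{\alpha\beta}$, i.e. $4|\Pi|-2$ elements in all; so it is enough to prove they span $A(\frak g)$, which I would do block by block.

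The off-diagonal part is immediate: a non-adjacent pair contributes a zero block, and an adjacent pair $\alpha\ne\beta$ a $1$-dimensional block of which $\varphi_{\alpha\beta}$ is, by choice, a nonzero element and therefore a basis. Thus the whole issue reduces to showing that $\pi_\alpha$ and $\pi_\alpha^0$ span $\operatorname{End}_{\Ue(\frak g)}(P_\alpha)$ for each $\alpha$. Here I would first observe, via Corollary~\ref{Cmain}, that $\dim\operatorname{Hom}_{\Ue(\frak g)}(P_\alpha,M_\beta)=[M_\beta:M_\alpha]=\delta_{\alpha\beta}$, so $P_\alpha$ has simple top $M_\alpha$; in particular $P_\alpha$ is indecomposable of finite length, so $\operatorname{End}_{\Ue(\frak g)}(P_\alpha)$ is a local $\mathbb F$-algebra with residue field $\mathbb F$, and its Jacobson radical is $1$-dimensional. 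Since $\pi_\alpha=\mathrm{id}_{P_\alpha}$ is a unit, it remains to check that $\pi_\alpha^0$ is a nonzero element of that radical.

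Both checks are routine. By construction $\pi_\alpha^0$ is a composite $P_\alpha\xrightarrow{q}M_\alpha\xrightarrow{i}P_\alpha$ with $q\ne0$ (hence surjective, $M_\alpha$ being simple) and $i\ne0$ (hence injective); therefore $\operatorname{im}\pi_\alpha^0=i(M_\alpha)\ne0$, so $\pi_\alpha^0\ne0$. On the other hand, by Lemma~\ref{Ldimh}a) and Corollary~\ref{Cmain}, the Jordan--H\"older multiplicity $[P_\alpha:M_\alpha]$ equals $2$, so $P_\alpha$ has length at least $2$ while $M_\alpha$ has length $1$; hence $q$ is not injective, so $\pi_\alpha^0=iq$ is not injective, in particular not invertible. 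Thus $\pi_\alpha^0$ is a nonzero element of the $1$-dimensional radical of $\operatorname{End}_{\Ue(\frak g)}(P_\alpha)$, and $\{\pi_\alpha,\pi_\alpha^0\}$ is a basis of that block. Assembling the diagonal and off-diagonal pieces yields a spanning set of size $4|\Pi|-2$ in a space of dimension $4|\Pi|-2$, which is therefore a basis.

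I do not expect a genuine obstacle: the argument is essentially bookkeeping of dimensions. The two places deserving a word of care are the use of the tree property of $\Gamma$ in the edge count, and the claim that $\operatorname{End}_{\Ue(\frak g)}(P_\alpha)$ is local with one-dimensional radical --- both of which follow at once once Corollary~\ref{Cmain} and the finiteness of length in $\mathcal C^0$ are granted.
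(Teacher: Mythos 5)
Your proof is correct and follows essentially the same route as the paper: decompose $A(\frak g)$ into the blocks $\operatorname{Hom}_{\Ue(\frak g)}(P_\alpha,P_\beta)$, use Lemma~\ref{Ldimh} for their dimensions, and observe that $\varphi_{\alpha\beta}$ is nonzero while $\pi_\alpha,\pi_\alpha^0$ are linearly independent (the paper simply notes they are not proportional, whereas you justify this via locality of $\operatorname{End}_{\Ue(\frak g)}(P_\alpha)$ and the nilpotence of $\pi_\alpha^0$, a slightly heavier but equivalent argument). The global count $4|\Pi|-2$ is harmless but redundant, since the block-by-block argument already yields a basis.
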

\begin{proof} It is clear that $\pi_\alpha, \pi_\alpha^0$ are not proportional. This together with Lemma~\ref{Ldimh}a) implies that they form a basis of $\operatorname{Hom}_{\Ue(\frak g)}(P_\alpha, P_\alpha)$. Next, Lemma~\ref{Ldimh}b) implies that $\varphi_{\alpha\beta}$ is a basis of $\operatorname{Hom}_{\Ue(\frak g)}(P_\alpha, P_\beta)$ for a pair of adjacent roots $\alpha, \beta\in\Pi$. We left to mention that $\dim \operatorname{Hom}_{\Ue(\frak g)}(P_\alpha, P_\beta)=0$ if $\alpha$ and $\beta$ are not adjacent according to Lemma~\ref{Ldimh}c).\end{proof}
%\begin{remark}It is worth to mention that we can pick elements $\pi_\alpha^0, \varphi_{\alpha\beta}$ in a unique up to scaling way. We will be able to enhance this scaling a little bit but it would be clear from the construction that a choice of basis elements can't be unique, i.e. that $\operatorname{End}_{\Ue(\frak g)}(\oplus_{\alpha\in\Pi}P_\alpha)$ has a quite large automorphism group.\end{remark}
To complete the proof of Theorem~\ref{Talg} we need to evaluate the products of the elements of the basis.
\begin{lemma}\label{Lend}We have $(\pi_\alpha^0)^2=0$.\end{lemma}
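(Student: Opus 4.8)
The statement to prove is that $(\pi_\alpha^0)^2 = 0$.

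\medskip

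The plan is to exploit the factorization of $\pi_\alpha^0$ through the simple module $M_\alpha$ together with the homological properties of $M_\alpha$ inside $\mathcal C^0$. Recall that $\pi_\alpha^0$ is, by construction, a composition $P_\alpha \xrightarrow{p} M_\alpha \xrightarrow{i} P_\alpha$ of nonzero morphisms, where $p$ is a surjection onto the head and $i$ is an injection of the socle (both unique up to scaling by Corollary~\ref{Cmain}, which gives $\dim\operatorname{Hom}(M_\alpha,P_\alpha)=\dim\operatorname{Hom}(P_\alpha,M_\alpha)=1$). Therefore $(\pi_\alpha^0)^2 = i\circ(p\circ i)\circ p$, and the whole question reduces to showing that the scalar $p\circ i \in \operatorname{Hom}_{\Ue(\frak g)}(M_\alpha, M_\alpha) \cong \mathbb F$ is zero, i.e. that the composite $M_\alpha \xrightarrow{i} P_\alpha \xrightarrow{p} M_\alpha$ vanishes.

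\medskip

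To see this vanishing, I would argue that the image of $i$ lies in the radical of $P_\alpha$, equivalently that $i$ is not a split injection, equivalently that $M_\alpha$ is not a direct summand of $P_\alpha$. Indeed, if $M_\alpha \xrightarrow{i} P_\alpha \xrightarrow{p} M_\alpha$ were a nonzero scalar, then (rescaling) $i$ would split, so $P_\alpha \cong M_\alpha \oplus Q$ for some object $Q$ of $\mathcal C^0$; but $P_\alpha$ is indecomposable — it has simple head $M_\alpha$ since $P_\alpha = \phi_\alpha M_\alpha^-$ is an indecomposable projective cover of $M_\alpha$ (the endomorphism ring $\operatorname{End}(P_\alpha)$ is $2$-dimensional by Lemma~\ref{Ldimh}a), hence local, so $P_\alpha$ is indecomposable) — and $P_\alpha$ is not simple because $\dim\operatorname{End}(P_\alpha)=2 \ne 1$. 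Hence no such splitting exists, so $p\circ i = 0$ and consequently $(\pi_\alpha^0)^2 = i\circ 0 \circ p = 0$.

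\medskip

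An alternative, perhaps cleaner, route: since $\operatorname{End}_{\Ue(\frak g)}(P_\alpha)$ is a $2$-dimensional local $\mathbb F$-algebra (local because $P_\alpha$ is indecomposable of finite length), it is isomorphic to $\mathbb F[x]/(x^2)$, and its unique maximal ideal is $1$-dimensional and nilpotent of square zero. Now $\pi_\alpha^0$ is not a unit in this algebra (it kills $P_\alpha$ down to its head and reinjects, so it is not invertible — concretely its image is $i(M_\alpha) \subsetneq P_\alpha$), hence $\pi_\alpha^0$ lies in the maximal ideal, and therefore $(\pi_\alpha^0)^2$ lies in the square of the maximal ideal, which is $0$. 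The main (and only mild) obstacle is pinning down that $P_\alpha$ is indecomposable and not simple; both follow immediately from $\dim\operatorname{End}(P_\alpha)=2$ established in Lemma~\ref{Ldimh}a) together with the finite-length property of objects of $\mathcal C^0$, so there is no real difficulty here. I would present the argument via the local-algebra formulation, as it is shortest and makes the later multiplication computations in Theorem~\ref{Talg} uniform.
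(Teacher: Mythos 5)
Your overall strategy is essentially the paper's: factor $\pi_\alpha^0$ through $M_\alpha$, show $P_\alpha$ is indecomposable so that its endomorphisms are either invertible or nilpotent, and use the simplicity of the image to kill the square. The one step that is not justified as written is the indecomposability of $P_\alpha$: you deduce it from ``$\dim\operatorname{End}_{\Ue(\frak g)}(P_\alpha)=2$, hence $\operatorname{End}(P_\alpha)$ is local''. A two-dimensional unital $\mathbb F$-algebra need not be local ($\mathbb F\times\mathbb F$ is the counterexample), and a nonzero non-unit of $\mathbb F\times\mathbb F$ need not square to zero; so the dimension count alone gives you neither locality nor $(\pi_\alpha^0)^2=0$. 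In your alternative route the same point is hidden in a circle: $\operatorname{End}(P_\alpha)$ is local because $P_\alpha$ is indecomposable, and $P_\alpha$ is indecomposable because $\operatorname{End}(P_\alpha)$ is local. Knowing in addition that $\pi_\alpha^0$ is a non-unit does not help by itself, since $\mathbb F\times\mathbb F$ has nonzero non-units with nonzero square.

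The repair is immediate from Corollary~\ref{Cmain}, which you already cite: $\dim\operatorname{Hom}_{\Ue(\frak g)}(P_\alpha,M_\beta)$ equals the Jordan--H\"older multiplicity of $M_\alpha$ in $M_\beta$, i.e.\ $1$ for $\beta=\alpha$ and $0$ otherwise, so $P_\alpha$ has a unique maximal submodule (the kernel of $\pi_\alpha^0$) and is therefore indecomposable --- this is exactly how the paper argues. (Equivalently: if $P_\alpha=A\oplus B$ with $A,B\ne0$, each summand of finite length surjects onto some simple $M_\beta$, and the multiplicity count forces $\beta=\alpha$ for both summands, contradicting $\dim\operatorname{Hom}(P_\alpha,M_\alpha)=1$.) Once indecomposability is in place, either of your finishes --- the splitting argument showing $p\circ i=0$, or the observation that $\operatorname{End}(P_\alpha)\cong\mathbb F[x]/(x^2)$ with $\pi_\alpha^0$ in the maximal ideal --- works and coincides in substance with the paper's proof, which notes that $\pi_\alpha^0$ is not a scalar (its image is a proper, simple submodule) and hence is nilpotent with square zero.
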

\begin{proof}  Corollary~\ref{Cmain} implies that $P_\alpha$ has the unique proper maximal submodule which is precisely the kernel of $\pi_\alpha^0$. This implies that $P_\alpha$ is indecomposable and hence all endomorphisms of $P_\alpha$ are either scalar of nilpotent. The image of $\pi_\alpha^0$ is a simple submodule of $P_\alpha$ and therefore $\pi_\alpha^0$ is not a scalar operator on $P_\alpha$. Therefore $\pi_\alpha^0$ is nilpotent, and the fact that the image of $\pi_\alpha^0$ is simple implies that $(\pi_\alpha^0)^2=0$.\end{proof}
\begin{lemma}\label{Lpra}Let $\alpha, \beta\in\Pi$ be adjacent one to each other. Then

a) $\pi_\beta^0\varphi_{\alpha\beta}=0$,

b) $\varphi_{\alpha\beta}\pi_\alpha^0=0$,

c) $\varphi_{\alpha\beta}\varphi_{\beta\alpha}$ is proportional to $\pi_\beta^0$.\end{lemma}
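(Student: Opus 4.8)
The plan is to work entirely inside the Hom-spaces computed in Lemma~\ref{Ldimh}, exploiting the projective-injective structure of the $P_\alpha$ established in Corollary~\ref{Cmain}. For part a), I would argue that $\pi_\beta^0\varphi_{\alpha\beta}$ is an element of $\operatorname{Hom}_{\Ue(\frak g)}(P_\alpha, P_\beta)$ which factors through $M_\beta$, since $\pi_\beta^0$ factors as $P_\beta\to M_\beta\to P_\beta$. Hence the composition factors as $P_\alpha\to M_\beta\to P_\beta$, so it suffices to show $\operatorname{Hom}_{\Ue(\frak g)}(P_\alpha, M_\beta)=0$; but by Corollary~\ref{Cmain} (the injectivity half, applied to $M=M_\beta$, i.e. $\dim\operatorname{Hom}(M_\beta,P_\alpha)=$ multiplicity of $M_\alpha$ in $M_\beta$), and the self-duality of the situation, this Hom-space has dimension equal to the Jordan--Hölder multiplicity of $M_\alpha$ in $M_\beta$, which is $0$ for $\alpha\ne\beta$. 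So $\pi_\beta^0\varphi_{\alpha\beta}=0$. Part b) is symmetric: $\varphi_{\alpha\beta}\pi_\alpha^0$ factors through $M_\alpha$, hence through $\operatorname{Hom}_{\Ue(\frak g)}(M_\alpha, P_\beta)$, which again has dimension equal to the multiplicity of $M_\beta$ in $M_\alpha$, namely $0$. Both vanishings are thus immediate from the multiplicity computation in Corollary~\ref{Cmain} together with Lemma~\ref{Ltra-}/Lemma~\ref{Lp3}.

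For part c), the composition $\varphi_{\alpha\beta}\varphi_{\beta\alpha}$ is an endomorphism of $P_\beta$, so by Lemma~\ref{Ldimh}a) it lies in the two-dimensional space $\operatorname{Hom}_{\Ue(\frak g)}(P_\beta,P_\beta)=\mathbb F\pi_\beta\oplus\mathbb F\pi_\beta^0$. To show it has no $\pi_\beta$-component (i.e. is not invertible modulo the radical), I would observe that $P_\beta$ is indecomposable with simple head $M_\beta$ (Lemma~\ref{Lend}'s proof), so a scalar-plus-nilpotent endomorphism is invertible iff its scalar part is nonzero iff it induces a nonzero map on the head $M_\beta$. But $\varphi_{\beta\alpha}$ induces a map $P_\beta\to P_\alpha$ whose image lies in the radical of $P_\alpha$ — indeed $\operatorname{Hom}(P_\beta,P_\alpha)$ is spanned by $\varphi_{\beta\alpha}$, which cannot be surjective onto the head $M_\alpha$ of $P_\alpha$ since otherwise it would split off a copy, contradicting that $P_\beta$ has head $M_\beta\ne M_\alpha$ — so the composite $\varphi_{\alpha\beta}\varphi_{\beta\alpha}$ lands in the radical of $P_\beta$ and hence induces $0$ on $M_\beta$. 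Therefore $\varphi_{\alpha\beta}\varphi_{\beta\alpha}\in\mathbb F\pi_\beta^0$, which is the claim.

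I do not anticipate a serious obstacle here; the only point requiring care is making the ``factors through $M_\gamma$'' arguments precise, since $\pi_\gamma^0$ is only defined up to scalar as a composition $P_\gamma\twoheadrightarrow M_\gamma\hookrightarrow P_\gamma$ — one must fix such a factorization and then genuinely compose the maps, rather than argue abstractly. The slightly more delicate half is part c): ruling out the $\pi_\beta$-component. The cleanest route, as above, is to pass to the head $M_\beta$ of the indecomposable projective $P_\beta$ and note that any endomorphism factoring (on either side) through a non-isomorphic $P_\alpha$ must be non-invertible; an alternative is a direct dimension count using that $\varphi_{\beta\alpha}$ kills all composition factors of $P_\beta$ except the copy of $M_\alpha$ in its socle, so its image is the simple socle, on which $\varphi_{\alpha\beta}$ then acts by the ``return'' map into $P_\beta$ whose image is again a simple submodule — forcing the square to be a scalar multiple of $\pi_\beta^0$. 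Either way the argument stays at the level of one- and two-dimensional Hom-spaces and never requires explicit module computations.
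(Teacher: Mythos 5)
Your argument is correct and runs along essentially the same lines as the paper's: everything is reduced to the multiplicity formula of Corollary~\ref{Cmain} (together with Lemma~\ref{Ldimh}), the resulting simple-socle/simple-head structure of the modules $P_\alpha$, and the fact that $\varphi_{\alpha\beta}$ cannot be surjective. For a) and b) the paper argues with the unique maximal submodule $\ker\pi_\beta^0$ and with $\operatorname{Hom}_{\Ue(\frak g)}(M_\alpha,P_\beta)=0$, which is the same computation you perform after factoring $\pi^0$ through the head; for c) the paper writes the product as $c_1\pi_\beta+c_2\pi_\beta^0$ via Proposition~\ref{Pbas} and kills $c_1$ by noting that $c_1\ne0$ would force $\varphi_{\alpha\beta}$ to be surjective, contradicting the case analysis of a) --- your head/radical argument is the same reasoning in different clothing (the cleanest formulation is that $\varphi_{\alpha\beta}$ itself is not surjective, so its image, and a fortiori the image of the composite, lies in the unique maximal submodule of $P_\beta$). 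Two caveats. First, the paper's proof of c) additionally shows $\varphi_{\alpha\beta}\varphi_{\beta\alpha}\ne0$, by a Jordan--H\"older multiplicity count inside $P_\alpha$; the bare statement ``proportional to $\pi_\beta^0$'' does not require this, but the nonvanishing is precisely what is used in the proof of Theorem~\ref{Talg} to produce the nonzero constants $c_{\alpha,\beta}$ and rescale the basis, so if your lemma is to feed into that argument you should add this step. Second, the ``alternative'' sketch at the end of your c) is incorrect as stated: the image of $\varphi_{\beta\alpha}$ is not the simple socle of $P_\alpha$ --- being a nonzero quotient of $P_\beta$ and a nonzero submodule of $P_\alpha$, it is a length-two module with socle $M_\alpha$ and head $M_\beta$; your main argument is unaffected by this.
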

\begin{proof}Part a). Corollary~\ref{Cmain} implies that $P_\beta$ has the unique proper maximal submodule $(P_\beta)_{sub}$ which is precisely the kernel of $\pi_\beta^0$. Let $P_{\alpha\beta}$ be the image of $\varphi_{\alpha\beta}$. Either $P_{\alpha\beta}\subset(P_\beta)_{sub}$ or $P_{\alpha\beta}=P_\beta$. In the first case we have $\pi_\beta^0\varphi_{\alpha\beta}=0$. Thus we proceed to the second case.

Assume that $P_{\alpha\beta}=P_\beta$. Then the fact that $P_\beta$ is projective implies that $$1=\dim \operatorname{Hom}_{\Ue(\frak g)}(P_\beta, P_\alpha)\ge\dim \operatorname{Hom}_{\Ue(\frak g)}(P_\beta, P_\beta)=2.$$ This is a contradiction.

Part b). Corollary~\ref{Cmain} implies that $P_\alpha$ has the unique simple submodule which is precisely the image of $\pi_\alpha^0$ and which is isomorphic to $M_\alpha$. We have $\dim \operatorname{Hom}_{\Ue(\frak g)}(M_\alpha, P_\beta)=\dim \operatorname{Hom}_{\Ue(\frak g)}(P_\beta, M_\alpha)=0$.

Part c). First, we show that $\varphi_{\alpha\beta}\varphi_{\beta\alpha}\ne0$. Assume to the contrary that $\varphi_{\alpha\beta}\varphi_{\beta\alpha}=0$. This means that  the image $P_{\beta\alpha}$ of $\varphi_{\beta\alpha}$ belongs to the kernel $\operatorname{Ker}\varphi_{\alpha\beta}$ of $\varphi_{\alpha\beta}$. Then we have a nonzero morphism
$$P_\alpha/P_{\beta\alpha}\to P_\beta.$$This immediately implies that the Jordan-H\"older multiplicities of $M_\beta$ in $P_{\beta\alpha}$  and $P_\alpha/P_{\beta\alpha}$ both are nonzero. Hence the Jordan-H\"older multiplicity of $M_\beta$ in $P_\alpha$ is at least 2. This is not the case.

Proposition~\ref{Pbas} implies that$$\varphi_{\alpha\beta}\varphi_{\beta\alpha}=c_1\pi_\beta+c_2\pi^0_\beta.$$
If $c_1\ne0$ then the fact that $(\pi_\beta^0)^2=0$ implies that $\varphi_{\beta\alpha}\varphi_{\alpha\beta}$ is surjective and therefore that $\varphi_{\beta\alpha}$ is surjective. This is wrong, see case a). Therefore $\varphi_{\alpha\beta}\varphi_{\beta\alpha}=c_2\pi_\beta^0$.\end{proof}
\begin{lemma}\label{Lpr3}Let $\alpha, \beta, \gamma\in\Pi$ be such that $\alpha$ and $\beta$ are adjacent, $\beta$ and $\gamma$ are adjacent, $\alpha\ne\gamma$. Then $\varphi_{\beta\gamma}\varphi_{\alpha\beta}=0$.\end{lemma}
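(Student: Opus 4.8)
The plan is to reduce the statement to the vanishing of a $\operatorname{Hom}$-space already computed in Lemma~\ref{Ldimh}. The composition $\varphi_{\beta\gamma}\varphi_{\alpha\beta}$ is a morphism $P_\alpha\to P_\gamma$, hence an element of $\operatorname{Hom}_{\Ue(\frak g)}(P_\alpha, P_\gamma)$; so it suffices to show that this space is zero, and by Lemma~\ref{Ldimh}(c) this holds as soon as $\alpha$ and $\gamma$ are distinct and not adjacent.

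First I would record that $\alpha\ne\gamma$ by hypothesis. Next comes the key (and essentially only) combinatorial point: $\alpha$ and $\gamma$ cannot be adjacent. Indeed, $\Gamma$ is the Dynkin diagram of a simple Lie algebra, so it is a tree; if $\alpha$ and $\gamma$ were adjacent, then $\alpha$, $\beta$, $\gamma$ would be three pairwise-adjacent simple roots, i.e.\ a $3$-cycle in $\Gamma$, contradicting acyclicity. Hence $\alpha$ and $\gamma$ are distinct and non-adjacent, Lemma~\ref{Ldimh}(c) gives $\operatorname{Hom}_{\Ue(\frak g)}(P_\alpha, P_\gamma)=0$, and therefore $\varphi_{\beta\gamma}\varphi_{\alpha\beta}=0$.

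I do not expect any real obstacle: the whole content is the triangle-free property of $\Gamma$, which is standard, together with the already-established dimension formula for $\operatorname{Hom}_{\Ue(\frak g)}(P_\alpha, P_\gamma)$. If one preferred to avoid invoking that $\Gamma$ is a tree, one could argue in the style of Lemma~\ref{Lpra}(c): a nonzero map $P_\alpha\to P_\gamma$ would force the Jordan--H\"older multiplicity of $M_\alpha$ in $P_\gamma$ to be positive via Corollary~\ref{Cmain}, but by Lemmas~\ref{Lm},~\ref{Lp3},~\ref{Lmul} the composition series of $P_\gamma\cong T_\gamma M_\gamma$ consists of $M_\gamma$ with multiplicity $2$ together with the $M_\delta$ for $\delta$ adjacent to $\gamma$, none of which equals $M_\alpha$; this reproves the needed vanishing directly.
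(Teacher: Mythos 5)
Your proof is correct and is essentially the paper's own argument: the paper likewise observes that $\alpha$ and $\gamma$ cannot be adjacent (you supply the explicit reason, acyclicity of the Dynkin diagram) and then invokes Lemma~\ref{Ldimh}c) to conclude $\operatorname{Hom}_{\Ue(\frak g)}(P_\alpha, P_\gamma)=0$, so the composition vanishes.
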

\begin{proof}It is clear that $\alpha$ is not adjacent to $\gamma$. Thus $\dim\operatorname{Hom}_{\Ue(\frak g)}(P_\alpha, P_\gamma)=0$. Hence $\varphi_{\beta\gamma}\varphi_{\alpha\beta}=0$.\end{proof}
\begin{proof}[Proof of Theorem~\ref{Talg}]Lemma~\ref{Lpra} implies that, for a pair of adjacent roots $\alpha, \beta\in\Pi$, there exist a nonzero constant $c_{\alpha,\beta}$ such that $\varphi_{\beta\alpha}\varphi_{\alpha\beta}=c_{\alpha,\beta}\pi_\alpha^0$. Denote by $\alpha_0$ the unique simple root with 3 neighbours. For any $\alpha\in\Pi$ put $\alpha_0, \alpha_1,..., \alpha_{n(\alpha)}$ to be the shortest sequence of adjacent roots which connects $\alpha_0$ with $\alpha=\alpha_{n(\alpha)}$. Set
$$\underline\pi_{\alpha}:=\pi_\alpha (\forall\alpha),\hspace{10pt}\underline\varphi_{\alpha_{n(\alpha)-1}\alpha}:=\varphi_{\alpha_{n(\alpha)-1}\alpha}~(\alpha\ne\alpha_0)$$

$$\underline\pi_\alpha^0:=\begin{cases}(\prod\limits_{0\le i< n(\alpha)}\frac{c_{\alpha_{i+1}\alpha_i}}{c_{\alpha_i\alpha_{i+1}}})\pi_{\alpha}^0\hspace{10pt}\mbox{~if~}\alpha\ne\alpha_0\\\pi_{\alpha}^0\hspace{10pt}\mbox{otherwise}\end{cases},$$
$$\underline\varphi_{\alpha_{n(\alpha)}\alpha_{n(\alpha)-1}}:=\begin{cases}(\prod\limits_{0\le i<n(\alpha)}\frac{c_{\alpha_{i+1}\alpha_i}}{c_{\alpha_i\alpha_{i+1}}})\frac{\varphi_{\alpha_{n(\alpha)}\alpha_{n(\alpha)-1}}}{c_{\alpha_{n(\alpha)}\alpha_{n(\alpha)-1}}}\hspace{10pt}\mbox{~if~}\alpha\ne\alpha_0\\\mbox{undefined~otherwise}\end{cases}.$$
It can be easily checked through Lemmas~\ref{Lend},~\ref{Lpra},~\ref{Lpr3} that $\underline \pi_\alpha, \underline\pi_\alpha^0, \underline\varphi_{\alpha\beta}$ satisfy all conditions of Theorem~\ref{Talg}.\end{proof}
\section{Proof of Theorem~\ref{T1nint}}\label{Snint}
%\subsection{Maximal reductive root subalgebras} Let $\frak g$ be a reductive Lie algebra. We say that a subalgebra $\frak k\subset\frak g$ is a {\it root subalgebra} 
We need to expand our notation on Weyl groups, root systems e.t.c. to the nonintegral case. We use notation of Section~\ref{Scsfd}. Fix $\lambda\in\frak h^*$. Put $$\Delta^{\mathbb Z}:=\{\alpha\in\Delta\mid \frac{2(\lambda+\rho, \alpha)}{(\alpha, \alpha)}\in\mathbb Z\},%\hspace{10pt}(\Delta^\mathbb Z)^+:=\Delta^+\cap\Delta^\mathbb Z.
$$
It is clear that $\Delta_\lambda\subset\Delta^\mathbb Z$. We denote by $W^\mathbb Z$ the subgroup of $W$ generated by the reflections with respect to elements of $\Delta^\mathbb Z$. To the root system $\Delta^\mathbb Z$ we attach a Lie algebra $\frak g^\mathbb Z$ in such a way that $\frak h$ is a Cartan subalgebra of $\frak g^\mathbb Z$. We denote by $\frak b^\mathbb Z$ the Borel subalgebra of $\frak g^\mathbb Z$ attached to $\Delta^\mathbb Z\cap \Delta^+$. This allows as to define $M^\mathbb Z(\lambda), L^\mathbb Z(\lambda),  I^\mathbb Z(\lambda)$, and $O^\mathbb Z(\lambda)$.

To any root $\alpha\in\Delta$ we attach a coroot $\alpha^\vee:=\frac{2\alpha}{(\alpha, \alpha)}\in\frak h^*$. The set of coroots $\Delta^\vee$ is a root system, and the Lie algebra $\frak g^\vee$ attached to $\Delta^\vee$ is called {\it Langlands dual} to $\frak g$. It can be easily seen that $\frak g^\mathbb Z$ is not necessarily isomorphic to a subalgebra of $\frak g$, but $(\frak g^\mathbb Z)^\vee$ is canonically isomorphic to the subalgebra of $\frak g^\vee$ (it is defined as the sum of $\frak h$ with the weight spaces of weights of $(\Delta^\mathbb Z)^\vee$). It is worth to mention that if $\frak g$ is simple then $\frak g\cong\frak g^\vee$ if and only if $\frak g$ is not of type $B_n, C_n~(n\ge3)$.

\begin{definition}We say that a subalgebra $\frak k$ of $\frak g$ is an {\it r-subalgebra} if $\frak k$ and $\frak g$ have the same rank. By definition, $(\frak g^\mathbb Z)^\vee$ is an r-subalgebra of $(\frak g)^\vee$.\end{definition}

%\begin{definition}We say that $\lambda$ is {\it dominant} if $$\forall\alpha\in\Delta^\mathbb Z (\frac{2(\lambda+\rho, \alpha)}{(\alpha, \alpha)}\not\in\mathbb Z_{<0}).$$\end{definition}
We denote by $\operatorname{PrId}^\lambda$ the set primitive ideals $I$ of $\operatorname{U}(\frak g)$ such that $I\cap\operatorname{Z}(\frak g)=m_\lambda$. The following theorem is a slight modification of~\cite[Theorem~2.5]{BV2}.
\begin{theorem}\label{Tvo} The map
$$W^\mathbb Z\to\operatorname{PrId}^\lambda\hspace{10pt}(w\to I(w(\lambda+\rho)-\rho))$$
is surjective.
\end{theorem}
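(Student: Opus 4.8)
\emph{Proof plan.} The plan is to bootstrap from Duflo's theorem by means of the reduction to integral infinitesimal character. First I would observe that the map in the statement is the restriction to $W^\mathbb Z$ of the map $W\to\operatorname{PrId}^\lambda$, $w\mapsto I(w(\lambda+\rho)-\rho)$, and that this larger map is already surjective: by Duflo's theorem any primitive ideal of $\Ue(\frak g)$ equals $I(\nu)$ for some $\nu\in\frak h^*$, and the constraint $I(\nu)\cap\operatorname{Z}(\frak g)=m_\lambda$ forces $m_\nu=m_\lambda$, hence $\nu+\rho\in W(\lambda+\rho)$. So the entire content is to rewrite an arbitrary $I(x(\lambda+\rho)-\rho)$, $x\in W$, in the form $I(w(\lambda+\rho)-\rho)$ with $w\in W^\mathbb Z$. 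Since both $\operatorname{PrId}^\lambda$ and the image of the map in the statement depend on $\lambda$ only through the $W^\mathbb Z$-orbit of $\lambda+\rho$ (as $W^\mathbb Z$ and $\Delta^\mathbb Z$ are stable under $W^\mathbb Z$), I may and will normalise $\lambda$ so that $\lambda+\rho$ is dominant for $\Delta^\mathbb Z\cap\Delta^+$.

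Next I would carry out the reduction to integral infinitesimal character. By construction $\Delta^\mathbb Z$ is the integral root system at $\lambda$, so $\lambda$ is an integral weight for $\frak g^\mathbb Z$ and $W^\mathbb Z$ is its own integral Weyl group; the fact that $\frak g^\mathbb Z$ need not be a subalgebra of $\frak g$ is precisely what the passage to Langlands duals repairs, $(\frak g^\mathbb Z)^\vee$ being an r-subalgebra of $\frak g^\vee$. The translation-functor techniques of~\cite{BJ}, in the form of~\cite[Theorem~2.5]{BV2}, then identify $\operatorname{PrId}^\lambda$ with the set of primitive ideals of $\Ue(\frak g^\mathbb Z)$ of infinitesimal character $\lambda$, compatibly with the dot-action parametrisations on the two sides: writing $\rho^\mathbb Z$ for the half-sum of the positive roots of $\Delta^\mathbb Z$ (note that $\rho-\rho^\mathbb Z$ is integral on every coroot $\alpha^\vee$ with $\alpha\in\Delta^\mathbb Z$), the ideal $I(w(\lambda+\rho)-\rho)$ corresponds to $I^\mathbb Z(w(\lambda+\rho^\mathbb Z)-\rho^\mathbb Z)$ for $w\in W^\mathbb Z$. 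On the $\frak g^\mathbb Z$ side Duflo's theorem applies with integral Weyl group equal to all of $W^\mathbb Z$, and with no regularity hypothesis on $\lambda$, so every primitive ideal of $\Ue(\frak g^\mathbb Z)$ of infinitesimal character $\lambda$ has the form $I^\mathbb Z(w(\lambda+\rho^\mathbb Z)-\rho^\mathbb Z)$ with $w\in W^\mathbb Z$. Transporting back through the identification gives $I=I(w(\lambda+\rho)-\rho)$ with $w\in W^\mathbb Z$, which is the assertion.

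The main obstacle is the middle step, and it is exactly where ``a slight modification of~\cite[Theorem~2.5]{BV2}'' must be cashed out: one has to establish the identification of $\operatorname{PrId}^\lambda$ for $\frak g$ with the primitive ideals of $\Ue(\frak g^\mathbb Z)$ in the required generality --- for arbitrary, possibly $\Delta^\mathbb Z$-singular $\lambda$, and with the bijection matched to the dot-action parametrisations rather than to some other labelling. This rests on the stability of annihilators (and of the $\tau$-invariant and of associated varieties) under projective/translation functors, as in~\cite[Satz~2.14]{BJ} and Lemma~\ref{Ltra}, together with the r-subalgebra identification $(\frak g^\mathbb Z)^\vee\subset\frak g^\vee$; the $\Delta^\mathbb Z$-singular case can then be reduced to the $\Delta^\mathbb Z$-regular one by the same argument as in the proof of Proposition~\ref{Pps}. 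Everything else --- Duflo's theorem on both sides and the normalisation of $\lambda$ --- is routine.
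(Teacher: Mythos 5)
Your outer scaffolding is fine and matches what the paper actually does: Duflo's theorem does reduce surjectivity to rewriting $I(x(\lambda+\rho)-\rho)$, $x\in W$, with a parameter in $W^\mathbb Z$, and the normalisation of $\lambda$ is harmless, since replacing $\lambda+\rho$ by $u(\lambda+\rho)$ with $u\in W^\mathbb Z$ changes neither $W^\mathbb Z$ nor the image of the map. Note, however, that the paper offers no argument beyond the attribution itself: Theorem~\ref{Tvo} is imported wholesale, up to conventions, from \cite[Theorem~2.5]{BV2}.

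The gap is in your middle step. The identification you interpose --- a bijection between $\operatorname{PrId}^\lambda$ and the primitive ideals of $\Ue(\frak g^\mathbb Z)$ with the corresponding infinitesimal character, compatible with the dot-action parametrisations --- is at least as strong as the theorem: the claim that \emph{every} element of $\operatorname{PrId}^\lambda$ is matched to some ideal parametrised by $w\in W^\mathbb Z$ already contains the surjectivity being proved, so as an intermediate lemma it is circular unless it is simply quoted from the reference, which is exactly the paper's one-line proof. Moreover, the route you propose for ``cashing it out'' does not work: \cite[Satz~2.14]{BJ}, Lemma~\ref{Ltra} and the argument behind Proposition~\ref{Pps} all concern projective/translation functors acting on $\Ue(\frak g)$-modules within a fixed integrality class (singular versus regular central characters differing by an element of $\Lambda$); there is no projective or translation functor relating $\frak g$-modules to $\frak g^\mathbb Z$-modules, and $\frak g^\mathbb Z$ need not even be a subalgebra of $\frak g$ (only $(\frak g^\mathbb Z)^\vee$ sits inside $\frak g^\vee$), so these tools cannot deliver the nonintegral-to-integral reduction. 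That reduction is the genuine content of \cite{BV2} (coherent continuation, Harish--Chandra bimodule and $\tau$-invariant arguments), not a formal consequence of exactness of translation. A further small slip: the dictionary consistent with Theorem~\ref{Tired} matches $I(w(\lambda+\rho)-\rho)$ with $I^\mathbb Z(w(\lambda+\rho)-\rho^\mathbb Z)$, not with $I^\mathbb Z(w(\lambda+\rho^\mathbb Z)-\rho^\mathbb Z)$. In short, either cite \cite[Theorem~2.5]{BV2} outright --- in which case your plan collapses to the paper's proof --- or supply an actual argument for the reduction; the translation-functor facts you list do not supply it.
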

Put $\rho^\mathbb Z:=\frac12\sum_{\alpha\in\Delta^\mathbb Z\cap\Delta^+}\alpha$. We say that $\lambda$ is {\it dominant with respect to $\Delta^\mathbb Z\cap\Delta^+$} if $(\lambda, \alpha^\vee)\in\mathbb Z_{\ge0}$ for all $\alpha\in\Delta^\mathbb Z\cap\Delta^+$. The following statement is a straightforward corollary of~\cite[Theorem~4.8, Proposition~2.28]{BV2}.
\begin{theorem}\label{Tired} We have
$$\dim\frak g-\dim O(\lambda)=\dim \frak g^\mathbb Z-\dim O^{\mathbb Z}(\lambda+\rho-\rho^\mathbb Z).$$\end{theorem}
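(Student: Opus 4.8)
We have $\dim\frak g-\dim O(\lambda)=\dim \frak g^\mathbb Z-\dim O^{\mathbb Z}(\lambda+\rho-\rho^\mathbb Z)$.

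The plan is to identify both sides of the asserted equality with $\operatorname{rk}\frak g+2\,\mathbf{a}_{W^{\mathbb Z}}(c)$, where $c$ is the two-sided cell of $W^{\mathbb Z}$ attached to $\lambda$ and $\mathbf{a}$ is Lusztig's $\mathbf{a}$-function; since this number is manifestly the same whether it is read off inside $W=W(\frak g)$ or inside $W^{\mathbb Z}=W(\frak g^{\mathbb Z})$, the identity follows. First I would set $\mu:=\lambda+\rho-\rho^{\mathbb Z}$, so that $\mu+\rho^{\mathbb Z}=\lambda+\rho$; then $M^{\mathbb Z}(\mu)$ carries for $\frak g^{\mathbb Z}$ the infinitesimal character corresponding to $\lambda+\rho$, and the roots of $\Delta^{\mathbb Z}$ orthogonal to $\lambda+\rho$ — namely $\Delta_\lambda$ — are exactly the roots of $\frak g^{\mathbb Z}$ orthogonal to $\mu+\rho^{\mathbb Z}$, so $\lambda$ and $\mu$ are singular to the same extent (this forces the $\rho$-shift to be the right normalization). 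By Theorem~\ref{Tvo} we may write $I(\lambda)=I(w(\lambda+\rho)-\rho)$ with $w\in W^{\mathbb Z}$, and correspondingly $I^{\mathbb Z}(\mu)=I^{\mathbb Z}(w(\mu+\rho^{\mathbb Z})-\rho^{\mathbb Z})$. The key structural input is the reduction to integral infinitesimal character: the Kazhdan--Lusztig combinatorics of the block of $\mathcal O(\frak g)$ with infinitesimal character $\lambda$ is, by construction, that of $W^{\mathbb Z}$, which is also the combinatorics of the integral block of $\mathcal O(\frak g^{\mathbb Z})$ with infinitesimal character $\mu$; hence the two-sided cell of $W^{\mathbb Z}$ governing $I(\lambda)$ as a primitive ideal of $\Ue(\frak g)$ coincides with the one governing $I^{\mathbb Z}(\mu)$ as a primitive ideal of $\Ue(\frak g^{\mathbb Z})$ — call it $c$.

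Next I would bring in the two quantitative facts, which for the present purpose are supplied by \cite[Theorem~4.8, Proposition~2.28]{BV2} (cf.\ also \cite{JoA}): for a simple highest-weight module the associated variety of its annihilator is a nilpotent orbit closure of dimension twice the Gelfand--Kirillov dimension of the module, and the defect $|\Delta^+|-\operatorname{GKdim}L(\lambda)$ equals $\mathbf{a}_{W^{\mathbb Z}}(c)$. Using $\dim\frak g=\operatorname{rk}\frak g+2|\Delta^+|$ this yields
\[
\dim\frak g-\dim O(\lambda)=\dim\frak g-2\bigl(|\Delta^+|-\mathbf{a}_{W^{\mathbb Z}}(c)\bigr)=\operatorname{rk}\frak g+2\,\mathbf{a}_{W^{\mathbb Z}}(c).
\]
Since $\frak g^{\mathbb Z}$ has the same Cartan subalgebra $\frak h$, hence the same rank, and has positive roots $\Delta^{\mathbb Z}\cap\Delta^+$, the same computation carried out inside $\frak g^{\mathbb Z}$ — with the same cell $c$, hence the same $\mathbf{a}$-value — gives
\[
\dim\frak g^{\mathbb Z}-\dim O^{\mathbb Z}(\mu)=\operatorname{rk}\frak g^{\mathbb Z}+2\,\mathbf{a}_{W^{\mathbb Z}}(c)=\operatorname{rk}\frak g+2\,\mathbf{a}_{W^{\mathbb Z}}(c),
\]
and comparing the two displays proves Theorem~\ref{Tired}.

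The main obstacle is the structural input packaged as ``reduction to integral infinitesimal character'': Theorem~\ref{Tvo} by itself only gives a surjection of sets $W^{\mathbb Z}\to\operatorname{PrId}^\lambda$, whereas what the argument actually uses is that this surjection respects the cell decomposition of $W^{\mathbb Z}$ and that the cell, together with the codimension of the associated variety it determines, is computed intrinsically in $W^{\mathbb Z}$ and is therefore insensitive to the ambient Lie algebra — which is precisely what \cite[Theorem~4.8, Proposition~2.28]{BV2} furnish. A point to respect is that, as recorded just before the theorem, $\frak g^{\mathbb Z}$ need not be realizable as a subalgebra of $\frak g$ (only $(\frak g^{\mathbb Z})^\vee$ is canonically a subalgebra of $\frak g^\vee$), so one must treat $\frak g^{\mathbb Z}$ abstractly as the reductive algebra with Cartan $\frak h$ and root system $\Delta^{\mathbb Z}$; this also explains why an alternative, essentially equivalent, route runs entirely on the Langlands-dual side, where $(\frak g^{\mathbb Z})^\vee$ is an $r$-subalgebra of $\frak g^\vee$ and $O(\lambda)$ is extracted from $O^{\mathbb Z}(\mu)$ by a codimension-preserving saturation of nilpotent orbits along that inclusion (the role of \cite[Proposition~2.28]{BV2}), the final bookkeeping being the same codimension count. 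Finally one should check the singular case $\Delta_\lambda\neq\emptyset$ (where $\Delta_\lambda\subset\Delta^{\mathbb Z}$, cf.\ Proposition~\ref{Pps}): there the cell and its $\mathbf{a}$-value remain unambiguous, obtained by translating to a regular parameter and back, and the argument goes through verbatim.
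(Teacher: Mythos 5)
Your argument is correct and follows essentially the same route as the paper, which simply records the statement as a straightforward corollary of \cite[Theorem~4.8, Proposition~2.28]{BV2}: both reduce the codimension of the associated variety to data computed in the integral Weyl group $W^{\mathbb Z}$, your version just making the bookkeeping explicit via $\operatorname{rk}\frak g+2\,\mathbf{a}_{W^{\mathbb Z}}(c)$ together with Joseph's relation between $\dim\operatorname{Var}(I(\lambda))$ and the Gelfand--Kirillov dimension of $L(\lambda)$. No substantive divergence from the paper's (citation-level) proof.
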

This theorem allows us to expand Theorem~\ref{T1} beyond the integral case. Recall that $$\operatorname{PrId}^\lambda(O)=\{I\in\operatorname{PrId}^\lambda\mid \operatorname{Var}(I)=\bar O\}.$$
Proposition~\ref{Pt1} is implied by~ Proposition~\ref{Pprw} and the following statement.
\begin{proposition} Assume that $\frak g$ is simply-laced and not of type $A_n$. If $\lambda$ is nonintegral then $\operatorname{PrId}^\lambda(O)$ is empty.\end{proposition}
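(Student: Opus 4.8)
The plan is to reduce the claim to the integral case via the machinery introduced in this section, namely Theorems~\ref{Tvo} and~\ref{Tired}, together with a classification of r-subalgebras. Suppose for contradiction that $\lambda$ is nonintegral and there exists $I\in\operatorname{PrId}^\lambda(O)$, so $\operatorname{Var}(I)=\bar O$ and $\dim\frak g-\dim O(\lambda)=\dim\frak g-\dim O$. By Theorem~\ref{Tvo} we may write $I=I(w(\lambda+\rho)-\rho)$ for some $w\in W^\mathbb Z$, and after replacing $\lambda$ by a $W^\mathbb Z$-translate we may assume $\lambda$ is dominant with respect to $\Delta^\mathbb Z\cap\Delta^+$, so that Theorem~\ref{Tired} applies with $O^\mathbb Z(\lambda+\rho-\rho^\mathbb Z)$ on the right. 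Combining this with the hypothesis $\dim O(\lambda)=\dim O$ forces
$$\dim\frak g^\mathbb Z-\dim O^\mathbb Z(\mu)=\dim\frak g-\dim O,$$
where $\mu:=\lambda+\rho-\rho^\mathbb Z$. Since $O$ is the minimal nonzero nilpotent orbit of the simple algebra $\frak g$, the number $\dim\frak g-\dim O$ equals $\dim\frak g-\dim O$, and in particular $O^\mathbb Z(\mu)$ must be a nilpotent orbit in $\frak g^\mathbb Z$ of the same (maximal possible, among proper orbits) codimension. The key observation is that $\frak g^\mathbb Z$ is a proper subalgebra of $\frak g$ precisely when $\lambda$ is nonintegral (more precisely, $(\frak g^\mathbb Z)^\vee$ is a proper r-subalgebra of $\frak g^\vee$), and for such a subalgebra the codimension of its minimal nonzero nilpotent orbit is strictly larger than $\dim\frak g-\dim O$ — unless $\frak g^\mathbb Z$ happens to contain a copy of $\frak g$ or something of comparable ``size'', which cannot happen for a proper subalgebra of equal rank when $\frak g$ is simply-laced and not of type $A_n$.

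The heart of the argument is therefore a numerical check: one lists, for each simply-laced simple $\frak g$ not of type $A_n$ (i.e. types $D_n$ and $E_6, E_7, E_8$), all maximal r-subalgebras $\frak k$ of $\frak g^\vee=\frak g$ — these are governed by the Borel--de Siebenthal classification, obtained by deleting nodes from the extended Dynkin diagram — and verifies that for each such proper $\frak k$ one has
$$\dim\frak k-\dim O_{\min}(\frak k)>\dim\frak g-\dim O,$$
where $O_{\min}(\frak k)$ runs over all nilpotent orbits of $\frak k$ (equivalently, the dimension of the nilpotent cone of $\frak k$ is too small). Since any proper r-subalgebra is contained in a maximal one and passing to a subalgebra only decreases the dimension of the nilpotent cone, the inequality for maximal r-subalgebras suffices. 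This is exactly the ``numerical result on maximal subalgebras'' promised for the Appendix; I would carry it out case by case, noting that $\dim\frak g-\dim O$ equals $2h^\vee-2$ (twice the dual Coxeter number minus two) in the simply-laced case, while for a semisimple $\frak k=\bigoplus\frak k_i$ the largest proper nilpotent orbit has codimension $\min_i(2h^\vee(\frak k_i)-2)$ plus the dimensions of the other factors, and one checks this exceeds $2h^\vee(\frak g)-2$ for all proper r-subalgebras. The exclusion of type $A_n$ is essential here because $\frak{sl}(n)$ does contain proper r-subalgebras (e.g. $\frak{gl}(1)\oplus\frak{sl}(n-1)$-type Levis) whose nilpotent cone is large enough, which is precisely why Theorem~\ref{T1} itself excludes that type.

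The main obstacle I anticipate is not conceptual but organizational: assembling the complete list of maximal r-subalgebras and computing $\dim\frak g-\dim O$ together with the relevant orbit dimensions in each factor, being careful that (i) the equal-rank condition is genuinely used — one must allow all Borel--de Siebenthal subalgebras, including those with torus factors and those that are themselves not simply-laced (e.g. $B_3\subset D_4$, $C_3\subset$ something after dualizing) — and (ii) the reduction via Theorems~\ref{Tvo}, \ref{Tired} is applied with the correct shift $\mu=\lambda+\rho-\rho^\mathbb Z$ and the correct dominance normalization, since $\mu$ need not be dominant or even regular in $\frak g^\mathbb Z$, so one must invoke the full strength of those theorems rather than a naive orbit comparison. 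A secondary subtlety is that $\frak g^\mathbb Z$ is defined abstractly from the root system $\Delta^\mathbb Z$ and need only embed in $\frak g$ after passing to Langlands duals; in the simply-laced non-$A_n$ case $\frak g\cong\frak g^\vee$, so this self-duality is what lets us phrase the whole check inside $\frak g$ itself and makes the Appendix computation finite and clean.
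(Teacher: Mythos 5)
Your overall strategy coincides with the paper's: assume $I\in\operatorname{PrId}^\lambda(O)$ with $\lambda$ nonintegral, apply Theorem~\ref{Tired} to get $\dim\frak g-\dim O=\dim\frak g^{\mathbb Z}-\dim O^{\mathbb Z}(\lambda+\rho-\rho^{\mathbb Z})$, and derive a contradiction from a numerical fact about proper r-subalgebras of $\frak g^\vee\cong\frak g$ via the Borel--de Siebenthal list (this is exactly Proposition~\ref{Pmax} of the Appendix). However, the numerical fact you propose to verify is stated about the wrong quantities and in the wrong direction, and as formulated it is false, so the case-by-case check you describe would not yield the contradiction. You ask for $\dim\frak k-\dim O'>\dim\frak g-\dim O$ for \emph{all} nilpotent orbits $O'$ of a maximal proper r-subalgebra $\frak k$; taking $O'$ to be the regular orbit of $\frak k$ the left-hand side is $\operatorname{rank}\frak k=\operatorname{rank}\frak g$, which is far below $\dim\frak g-\dim O$ (for $E_8$: $8$ versus $190$), and even for the minimal nonzero orbit of $\frak k=E_7\oplus A_1$ one gets $136-2=134<190$. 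Relatedly, your formula ``$\dim\frak g-\dim O=2h^\vee-2$'' confuses the minimal orbit with its codimension: it is $\dim O$ itself that equals $2h^\vee-2$.

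What is actually needed is weaker and goes the opposite way, and it makes the orbit structure of $\frak g^{\mathbb Z}$ irrelevant. From Theorem~\ref{Tired} and $\dim O^{\mathbb Z}(\lambda+\rho-\rho^{\mathbb Z})\ge0$ one gets $\dim O\ge\dim\frak g-\dim\frak g^{\mathbb Z}$; so it suffices to know that \emph{every} proper r-subalgebra of $\frak g^\vee$ has codimension strictly greater than $\dim O$ (equivalently $\dim\frak g^{\mathbb Z}-\dim O^{\mathbb Z}\le\dim\frak g^{\mathbb Z}<\dim\frak g-\dim O$, so no orbit of $\frak g^{\mathbb Z}$ of the required codimension exists). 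This is precisely what comparing Table~\ref{Tab1} with Table~\ref{Tdimo} gives in the simply-laced non-$A_n$ cases ($4n-4>4n-6$ for $D_n$, $12>10$ for $D_4$, and $32>22$, $54>34$, $112>58$ for $E_6$, $E_7$, $E_8$), and it fails exactly for type $A_n$ ($2n-2<2n$), which is the true role of that exclusion. With the inequality corrected in this way your reduction matches the paper's proof; the appeal to Theorem~\ref{Tvo} and the dominance normalization are then not needed, since Theorem~\ref{Tired} is applied directly to $\lambda$.
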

\begin{proof} Assume to the contrary that $\lambda$ is nonintegral, and there exists an ideal $I$ in $\operatorname{PrId}^\lambda(O)$. The assumption on $\lambda$ implies that $$\dim \frak g^\mathbb Z<\dim\frak g.$$
Theorem~\ref{Tired} implies that
$$\dim O=\dim\frak g-\dim \frak g^\mathbb Z+\dim O^{\mathbb Z}(\lambda+\rho-\rho^\mathbb Z),$$
and thus
$$\dim O\ge\dim\frak g-\dim\frak g^\mathbb Z.$$
The dimensions of $\dim O$ are written in Table~\ref{Tdimo} (they are evaluated through~\cite[Lemma~4.3.5]{CM}).
\begin{table}[h!]\caption{}\label{Tdimo}$\begin{array}{|c|c|c|c|c|c|c|c|c|}\hline A_n~(n\ge2)&B_n~(n\ge2)&C_n~(n\ge2)&D_n~(n\ge4)&E_6&E_7&E_8&F_4&G_2\\
\hline 2n&4n-4&2n&4n-6&22&34&58&16&6\\\hline\end{array}$\end{table}

Comparing these numbers with Proposition~\ref{Pmax} of Appendix applied to $(\frak g^\mathbb Z)^\vee\subset(\frak g)^\vee$ we see that
$$\dim O<\dim(\frak g)^\vee-\dim(\frak g^\mathbb Z)^\vee$$
due to the fact that $\frak g$ is simply-laced and not of type $A$. This is a contradiction.\end{proof}
%Using a similar technique we also can prove the following theorem.

We wish to mention that cases  $C_n, G_2$ were considered in~\cite[Corollary~7.1]{Pr1}. Theorem~\ref{T1nint} is implied by the following proposition.
\begin{proposition}\label{Pdesc} Assume that $\frak g$ is not simply-laced and is simple. Then the following statements are equivalent

a) $\operatorname{U}(\frak g, e)-f.d.mod^{m_\lambda}$ has a nonzero object,

b) $\operatorname{U}(\frak g, e)-f.d.mod^{m_\lambda}$ is semisimple with a unique simple object.\end{proposition}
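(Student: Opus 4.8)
The plan is to reduce the non-simply-laced case to the structural results already established for the one-dimensional module and to the combinatorics of cells, exactly in the spirit of Section~\ref{Spr}. The implication b)$\Rightarrow$a) is trivial, so the content is a)$\Rightarrow$b). First I would invoke Proposition~\ref{Pprw}: a nonzero object in $\operatorname{U}(\frak g, e)-f.d.mod^{m_\lambda}$ exists if and only if $\operatorname{PrId}^\lambda(O)$ is nonempty, and the simple objects are in bijection with $\operatorname{PrId}^\lambda(O)$. So the proposition is equivalent to showing that, whenever $\operatorname{PrId}^\lambda(O)\ne\emptyset$, it is a singleton \emph{and} the unique corresponding simple $\operatorname{U}(\frak g,e)$-module has no self-extensions (semisimplicity of a one-object category is exactly the vanishing of $\operatorname{Ext}^1$ of its simple object with itself).

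For the cardinality statement I would argue as follows. Since $\frak g$ is not simply-laced, $O$ is not special (Subsection~\ref{SScell}, citing~\cite{CM}); in the integral case this already forces $\operatorname{PrId}^0(O)=\emptyset$ by Proposition~\ref{Passv}, and more generally $\operatorname{PrId}^\lambda(O)$ for integral $\lambda$ is empty by the same reasoning combined with Proposition~\ref{Pps}. For nonintegral $\lambda$ one passes to $\frak g^{\mathbb Z}$: by Theorem~\ref{Tired}, an ideal $I\in\operatorname{PrId}^\lambda(O)$ forces $\dim O\ge \dim\frak g-\dim\frak g^{\mathbb Z}$, and the dual picture $(\frak g^{\mathbb Z})^\vee\subset(\frak g)^\vee$ together with the numerical bound of Proposition~\ref{Pmax} of the Appendix pins down exactly which $\Delta^{\mathbb Z}$ can occur; for each such $\Delta^{\mathbb Z}$ the orbit $O^{\mathbb Z}(\lambda+\rho-\rho^{\mathbb Z})$ must be the (special, in $\frak g^{\mathbb Z}$) orbit of the correct dimension, and one checks case by case (types $B_n$, $C_n$, $F_4$, $G_2$) that there is then a \emph{unique} left cell in $W^{\mathbb Z}$ mapping to it, hence $|\operatorname{PrId}^\lambda(O)|\le 1$. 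This is where the work is, and it is essentially the non-simply-laced analogue of Proposition~\ref{Ppros}; the cases $C_n$ and $G_2$ are already in~\cite[Corollary~7.1]{Pr1}, and $B_n$, $F_4$ are the genuinely new ones.

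For the $\operatorname{Ext}^1$-vanishing I would argue that the unique simple object must in fact be the one-dimensional module: when $\operatorname{PrId}^\lambda(O)$ is a singleton, the corresponding primitive ideal $I$ has $\tau(I)=\Pi$ (it is minimal among primitive ideals over $m_\lambda$ with the right associated variety, by the cell combinatorics above), and by the analysis of Subsection~\ref{SSger} together with~\cite[Corollary~4.1]{Pr1} this forces the associated $\operatorname{U}(\frak g,e)$-module to be the one-dimensional module $M=\operatorname{U}(\frak g,e)/\operatorname{U}(\frak g,e)(\frak g_e(0)\oplus\frak g_e(1))$. Proposition~\ref{Pext} then gives $\operatorname{Ext}^1_{\operatorname{U}(\frak g,e)}(M,M)=0$ directly. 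Since every block of $\operatorname{U}(\frak g,e)-f.d.mod$ then has a unique simple object with no self-extensions and finitely many composition factors, each block is semisimple, and summing over $\lambda$ gives that $\operatorname{U}(\frak g,e)-f.d.mod$ is semisimple, which is Theorem~\ref{T1nint}.

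The main obstacle I anticipate is the nonintegral case of the cardinality count: one has to control, for every non-simply-laced simple $\frak g$ and every possible integrally-closed subsystem $\Delta^{\mathbb Z}\subsetneq\Delta$, both the dimension inequality (handled uniformly by Proposition~\ref{Pmax}) \emph{and} the claim that at most one two-sided/left cell of $W^{\mathbb Z}$ can produce an ideal whose associated variety, transported back to $\frak g$ via Theorem~\ref{Tired}, is $\bar O$. The dimension bound rules out most candidates outright; for the surviving ones the cell count is a finite check, but it must be done carefully for $F_4$ where the subsystem lattice is richest.
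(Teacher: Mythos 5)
Your reduction of a)$\Rightarrow$b) to ``$\operatorname{PrId}^\lambda(O)$ is a singleton plus $\operatorname{Ext}^1$-vanishing for the unique simple'' is the right frame, and your cardinality half runs along the paper's lines (Theorem~\ref{Tired} plus Proposition~\ref{Pmax}). One simplification you miss: the two inequalities force equality, hence $\dim O^{\mathbb Z}(\lambda+\rho-\rho^{\mathbb Z})=0$ and $W_\lambda=\{e\}$ (Lemma~\ref{Lmaxi1}), so by Theorem~\ref{Tvo} the unique element $w\in W^{\mathbb Z}$ is simply the one making the weight dominant for $\Delta^{\mathbb Z}\cap\Delta^+$ (Lemma~\ref{Lmod1}); no case-by-case count of left cells of $W^{\mathbb Z}$ is needed, since the relevant orbit in $\frak g^{\mathbb Z}$ is the zero orbit.

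The genuine gap is in your $\operatorname{Ext}^1$ step. It is false that the unique simple object of $\operatorname{U}(\frak g, e)-f.d.mod^{m_\lambda}$ is the one-dimensional module for every admissible $\lambda$: the admissible central characters form a whole family, and by~\cite[Theorem~6.2]{Pr1} the dimensions of the corresponding simple $\operatorname{U}(\frak g, e)$-modules are Goldie ranks which vary with $\lambda$; only for one particular central character is the simple module one-dimensional. Your intermediate claim $\tau(I)=\Pi$ also cannot hold: $\tau(I)=\Pi$ characterizes the annihilator of a finite-dimensional $\frak g$-module, whose associated variety is $\{0\}$ rather than $\bar O$, and for nonintegral $\lambda$ the $\tau$-invariant used in the paper is not even defined. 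Hence Proposition~\ref{Pext} cannot be applied directly to the block at $m_\lambda$. The paper instead inserts a block-equivalence step: using maximality of $(\frak g^{\mathbb Z})^\vee$ (Lemma~\ref{Lmaxi1}) and Lemma~\ref{Ldelta} it conjugates $\Delta^{\mathbb Z}$ into the standard position $\Delta(\frak k)$ determined by the weight $\mu_0$ of~\cite[Table~3]{JoM}, checks $\lambda-\mu_0+\rho\in\Lambda$ together with the dominance conditions, and then invokes a translation-functor equivalence (Lemma~\ref{Lmod2}, via~\cite[Section~4.1]{BGG}) identifying $\operatorname{U}(\frak g, e)-f.d.mod^{m_\lambda}$ with $\operatorname{U}(\frak g, e)-f.d.mod^{m_{\mu_0-\rho}}$; only after this transport does Proposition~\ref{Pext}, applied to the single block containing the one-dimensional module, yield $\operatorname{Ext}^1=0$ for all nonzero blocks. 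Without this step (or a direct proof of self-extension vanishing for the higher-dimensional simples) your argument fails for all but one central character.
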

\begin{proof}[Proof of Proposition~\ref{Pdesc}] It is enough to show that a) implies b). From now on we assume a). We need the following lemmas.
\begin{lemma}\label{Lmaxi1} Assume that $O(\lambda)=O$. Then

a) $(\frak g^\mathbb Z)^\vee$ is a maximal proper r-subalgebra of $\frak g^\vee$,

b) $\lambda+\rho-\rho^\mathbb Z$ is dominant with respect to $\Delta^\mathbb Z\cap\Delta^+$,

c) $W_{\lambda}=\{e\}$.\end{lemma}
\begin{proof}The minimal nilpotent orbit $O$ is not special under the assumption that $\frak g$ is not simply-laced, see~\cite{CM}. Thus $(\frak g^\mathbb Z)^\vee$ is a proper r-subalgebra of $\frak g^\vee$. Theorem~\ref{Tired} implies that
$$\dim O=\dim\frak g-\dim \frak g^\mathbb Z+\dim O^{\mathbb Z}(\lambda+\rho-\rho^\mathbb Z),$$
and thus that
$$\dim O\ge\dim\frak g-\dim\frak g^\mathbb Z.$$
The dimensions of $\dim O$ are written in Table~\ref{Tdimo}. Comparing these numbers with Proposition~\ref{Pmax} of Appendix applied to $(\frak g^\mathbb Z)^\vee\subset(\frak g)^\vee$ we see that
$$\dim O\le \dim(\frak g)^\vee-\dim(\frak g^\mathbb Z)^\vee$$
due to the fact that $\frak g$ is not simply-laced. Therefore Proposition~\ref{Pmax} and the fact that $(\frak g^\mathbb Z)^\vee$ is an r-subalgebra of $\frak g^\vee$, implies that $(\frak g^\mathbb Z)^\vee$ is a maximal r-subalgebra of $\frak g^\vee$. Also we have that $$\dim O^\mathbb Z(\lambda+\rho-\rho^\mathbb Z)=0,$$
and thus that $\lambda+\rho-\rho^\mathbb Z$ is dominant with respect to $\Delta^\mathbb Z$. The latter condition implies that $W_{\lambda}$ is trivial. This completes the proof.\end{proof}

Lemma~\ref{Lmaxi1} together with Proposition~\ref{Pdesc}a) implies a), b), c) of Lemma~\ref{Lmaxi1}. The lemma below is well known, and we provide the proof of it only for the convenience of a reader.
\begin{lemma}\label{Ldelta} Let $\Delta_1, \Delta_2\subset\Delta$ be subsets of $\Delta$ such that $$\frak g_1:=\frak h\oplus_{\alpha\in\Delta_1}\frak g_\alpha,\hspace{10pt}\frak g_2:=\frak h\oplus_{\alpha\in\Delta_1}\frak g_\alpha$$ are r-subalgebras of $\frak g$. Then the following conditions are equivalent

a) $\frak g_1$ is conjugate to $\frak g_2$ by the adjoint group $\operatorname{Adj}(\frak g)$ of $\frak g$,

b) there exists $w\in W$ such that $w(\Delta_1)=\Delta_2$.
\end{lemma}
\begin{proof} First, we show that a) implies b). We fix $g\in\operatorname{Adj}(\frak g)$ such that $g(\frak g_1)=\frak g_2$. By definition, $\frak h$ is Cartan subalgebra of both $\frak g_1$ and $\frak g_2$. Thus $g(\frak h)$ is a Cartan subalgebra of $\frak g_2$. Therefore there exists an element $g_2$ of the adjoint group $\operatorname{Adj}(\frak g_2)$ of $\frak g_2$ such that $g_2(g(\frak h))$. Put $g':=g_2g$. We have
$$g'(\frak g_1)=\frak g_2,\hspace{10pt}g'(\frak h)=\frak h.$$
The second condition implies that $g'$ can be represented by an element $w\in W$ and the first condition implies that this $w$ is as desired in b).

Next, we show that b) implies a). The Weyl group can be identified with the quotient of the normalizer of $\frak h$ in $\operatorname{Adj}(\frak g)$ by the centralizer of $\frak h$ in $\operatorname{Adj}(\frak g)$. Thus there exists $g\in \operatorname{Adj}(\frak g)$ such that
$$g(\frak h)=\frak h,\hspace{10pt}g(\frak g_\alpha)=\frak g_{w(\alpha)}.$$
It is clear that $g(\frak g_1)=\frak g_2$.\end{proof}

\begin{lemma}\label{Lmod1} Assume that $\operatorname{U}(\frak g, e)-f.d.mod^{m_\lambda}$ has a nonzero object. Then $\operatorname{U}(\frak g, e)-f.d.mod^{m_\lambda}$ has a unique simple object $M(\lambda)$. Moreover, there exists $w\in W^\mathbb Z$ such that $M_{f.d.}(\lambda)=\mathcal P^{-1}(I(w(\lambda+\rho)-\rho))$.\end{lemma}
\begin{proof}Theorem~\ref{Tvo} together with Proposition~\ref{Pprw} implies that any simple object of $\operatorname{U}(\frak g, e)-f.d.mod^{m_\lambda}$ is of the form $\mathcal P^{-1}(I(w(\lambda+\rho)-\rho))$ for $w\in W^\mathbb Z$.

We consider $w\in W^\mathbb Z$ such that $\mathcal P^{-1}(I(w(\lambda+\rho)-\rho))$ belongs to $\operatorname{U}(\frak g, e)-f.d.mod^{m_\lambda}$. The latter condition is equivalent to $O(w(\lambda+\rho)-\rho)=O$. This implies that $w(\lambda+\rho)-\rho^\mathbb Z$ is dominant with respect to $\Delta^\mathbb Z\cap\Delta^+$. This implies that $w$ is unique. Thus we have shown that $\operatorname{U}(\frak g, e)-f.d.mod^{m_\lambda}$ has a unique simple object. We denote this object $M_{f.d.}(\lambda)$.
\end{proof}
We left to prove that $\operatorname{Ext}^1(M_{f.d.}(\lambda), M_{f.d.}(\lambda))$. If $\frak g\cong G_2$ or $\frak g\cong C_n$ then the statement of Proposition~\ref{Pdesc} follows immediately from~\cite[Corollary~7.1]{Pr1}. From now on we assume that $\frak g$ is not of type $C_n, G_2$, i.e. that $\frak g$ is of type $B_n, F_4$. We use notation for roots and weights of~\cite{Bou}, and consider both cases simultaneously. The underlying calculations can be easily done through~\cite{Bou}. 

%Case 1: $\frak g\cong F_4$. :=-1\frac12\omega(\alpha_1)=-\frac12(\varepsilon_1+\varepsilon_2)$
Set $\mu_0$ as in~\cite[Table~3]{JoM}. Put $$\Delta(\frak k):=\{\alpha\in\Delta\mid (\alpha^\vee, \mu_0)\in\mathbb Z\},\hspace{10pt}\frak k^\vee:=\frak h\bigoplus\limits_{\alpha\in\Delta(\frak k)}(\frak g)^\vee_{\alpha^\vee}.$$
Dimension arguments together with Proposition~\ref{Pmax} implies that $\frak k^\vee$ is a maximal r-subalgebra of $\frak g^\vee$. We have that $(\frak g^\mathbb Z)^\vee$ is also a maximal r-subalgebra of $\frak g^\vee$. Lemma~\ref{Ldelta} implies that there exists $w'\in W$ such that $w'(\Delta^\mathbb Z)=\Delta(\frak k)$. Put $\lambda':=w'(\lambda+\rho)-\rho$. We have that $\operatorname{U}(\frak g, e)-f.d.mod^{m_\lambda}=\operatorname{U}(\frak g, e)-f.d.mod^{m_{\lambda'}}$. Thus we can assume that $\lambda=\lambda'$. This immediately implies that $\Delta^\mathbb Z=\Delta(\frak k)$.

%Put $\gamma:=\alpha_1$. Clearly, $\gamma\in\Pi$  is the only simple root such that $(\gamma, \chi)\ne 0$, by $\Pi^\mathbb Z$ the simple roots of $\Delta^\mathbb Z\cap\Delta^+$.\\ It is clear that $\Pi\backslash\gamma\subset\Pi^\mathbb Z$. Thus we put $\gamma^\mathbb Z$ to be the unique element of $\Pi^\mathbb Z\backslash\Pi$. For any $\alpha\in\Pi^\mathbb Z$, we denote by $\omega^\mathbb Z(\alpha)$ the corresponding fundamental weight.

Let $\gamma\in\Pi$ be the unique simple root such that $(\gamma, \chi)\ne 0$ (in both cases $\gamma=\alpha_1$). Then $\Pi\backslash\gamma\in\Delta^\mathbb Z$.

It is easy to check that $2\gamma^\vee$ belongs to the lattice generated by $(\Delta^\mathbb Z)^\vee$. Thus
$$2(\gamma^\vee, \lambda)\in\mathbb Z.$$
Therefore either $(\gamma^\vee, \lambda)\in\mathbb Z$ or $(\gamma^\vee, \lambda)\in\frac12+\mathbb Z$. We have that $(\alpha^\vee, \lambda)\in\mathbb Z$ for all $\alpha\in(\Pi\backslash\gamma)\subset\Delta^\mathbb Z$. Thus if $(\gamma^\vee, \lambda)\in\mathbb Z$ then $\Pi\subset \Delta^\mathbb Z$, and hence $\Delta^\mathbb Z=\Delta$. This is a contradiction.

Thus $(\gamma^\vee, \lambda)\in\frac12+\mathbb Z$. The fact that $(\alpha^\vee, \lambda)\in\mathbb Z$ for all $\alpha\in(\Pi\backslash\gamma)\subset\Delta^\mathbb Z$ implies that
$$(\lambda-\mu_0, \alpha^\vee)\in\mathbb Z.$$
As a consequence $\lambda-\mu_0\in\Lambda, \lambda-\mu_0+\rho\in\Lambda$. Lemma~\ref{Lmaxi1} implies that $\lambda+\rho-\rho^\mathbb Z$ is dominant with respect to $\Delta^\mathbb Z\cap\Delta^+$. It is straightforward to check that $\mu_0-\rho^\mathbb Z$ is dominant with respect to $\Delta^\mathbb Z\cap\Delta^+$.

To finish the proof we need the following lemma.
\begin{lemma}\label{Lmod2}Assume that

a) both $\lambda+\rho-\rho^\mathbb Z$ and $\mu_0-\rho^\mathbb Z$ are dominant with respect to $\Delta^\mathbb Z\cap\Delta^+$,

b) $\lambda-\mu_0+\rho\in\Lambda$,

c) $W_\lambda=W_{\mu_0}$.\\
Then $\operatorname{U}(\frak g, e)-f.d.mod^{m_\lambda}$ is equivalent to $\operatorname{U}(\frak g, e)-f.d.mod^{m_{\mu_0-\rho}}$.\end{lemma}
\begin{proof} We recall that $\operatorname{U}(\frak g, e)-f.d.mod^{m_\lambda}$ is equivalent to a category of $\frak g$-modules $\mathcal C^\lambda$, see Section~\ref{Spr}. Conditions a), b), c) implies together with~\cite[Theorem, Section 4.1]{BGG} that $\mathcal C^\lambda$ is equivalent to $\mathcal C^{\mu_0-\rho}$. Hence $\operatorname{U}(\frak g, e)-f.d.mod^{m_\lambda}$ is equivalent to $\operatorname{U}(\frak g, e)-f.d.mod^{m_{\mu_0-\rho}}$.\end{proof}
Thanks to Lemmas~\ref{Lmod1},~\ref{Lmod2} we have that
$$\dim\operatorname{Ext}^1(M_{f.d.}(\lambda),M_{f.d.}(\lambda))=\dim\operatorname{Ext}^1(M_{f.d.}(\mu_0-\rho), M_{f.d.}(\mu_0-\rho)).$$
Thus we left to show that $\dim\operatorname{Ext}^1(M ,M)=0$ for one nonzero simple finite-dimensional $\operatorname{U}(\frak g, e)$-module $M$. This was done in Proposition~\ref{Pext}.\end{proof}
\section{Acknowledgements}
I would like to thank Alexander Premet for many useful comments on this article, and many stimulating questions on the previous versions of it. I would like to thank Tobias Kildetoft for the reference~\cite{Dou}, Catharina Stroppel for the reference~\cite{HK}, Lewis Topley for the reference~\cite{LosQ}, \'Ernest Vinberg for the reference~\cite{BS}.
\section{Appendix: Maximal reductive root subalgebras}
Let $\frak g$ be a reductive Lie algebra. By definition, the rank of $\frak g$ equals the dimension of a Cartan subalgebra of $\frak g$. We say that a subalgebra $\frak k$ of $\frak g$ is an {\it r-subalgebra} if the rank of $\frak k$ equals the rank of $\frak g$. The description of maximal {\it by inclusion} r-subalgebras of simple Lie algebras are very well known, see~\cite{BS}, see also~\cite{Bou},~\cite{Dyn}. We used a description of maximal {\it by dimension} r-subalgebras in Section~\ref{Snint}. Of course, the second one is a straightforward consequence of the first one, but this consequence requires plenty of not very conceptual computations. We decided to provide these computations in this Appendix.

\begin{proposition}\label{Pmax} Let $\frak g$ be a simple Lie algebra. Then the conjugacy classes of maximal proper r-subalgebras $\frak k$ of $\frak g$ such are listed in Table~\ref{Tab1}. In particular, such a conjugacy class is unique if $\frak g\not\cong D_4$. 
\begin{table}[h!]\caption{}\label{Tab1}
$\begin{array}{|c|c|c|c|}\hline \frak g&\frak k&\operatorname{codim}(\frak g/\frak k)&\\\hline
A_{l}&A_{l-1}\oplus\mathbb F&2l-2&l\ge1\\
B_l&D_l&2l&l\ge 2\\
C_l&C_{l-1}\oplus C_1&4l-4&l\ge2\\
D_l&D_{l-1}\oplus\mathbb F&4l-4&l\ge5\\
D_4&D_3\oplus\mathbb F, A_3\oplus \mathbb F, A_3'\oplus\mathbb F&12&\\
E_6&D_5\oplus\mathbb F&32&\\
E_7&E_6\oplus\mathbb F&54&\\
E_8&E_7\oplus A_1&112&\\
F_4&B_4&16&\\
G_2&A_2&6&\\\hline
\end{array}.$\end{table}
\end{proposition}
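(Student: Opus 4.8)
The plan is to reduce the statement to the Borel--de Siebenthal classification of maximal reductive subalgebras of maximal rank, and then to sort the resulting finite list by dimension.

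First I would recall the Borel--de Siebenthal description of the maximal-by-inclusion r-subalgebras of a simple $\frak g$ (see \cite{BS}, and also \cite{Dyn}, \cite{Bou}): up to conjugacy each of them is obtained from the extended Dynkin diagram $\tilde\Gamma$ by deleting a single node $\alpha_i$ of $\tilde\Gamma$, where the coefficient of $\alpha_i$ in the highest root $\theta$ (with the affine node assigned coefficient $1$) is either a prime $p$ --- in which case one gets the semisimple subalgebra with root system spanned by the remaining nodes --- or equal to $1$ and $\alpha_i$ is not the affine node --- in which case one gets the Levi subalgebra $(\Gamma\setminus\alpha_i)\oplus\mathbb F$ of the corresponding maximal parabolic. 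This yields for each simple type an explicit finite list of candidates, each realised by a concrete sub-root-system $\Delta_{\frak k}\subset\Delta$.

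Next I would observe that the dimension-maximal proper r-subalgebra is automatically maximal by inclusion: every proper r-subalgebra is contained in one that is maximal by inclusion, and a subalgebra of strictly larger dimension could not then be proper. It therefore suffices to maximise $\dim\frak k$ over the finite Borel--de Siebenthal list in each type --- this is the ``not very conceptual'' computation the Appendix warns about. Using the standard dimensions ($\dim A_l=l^2+2l$, $\dim B_l=\dim C_l=2l^2+l$, $\dim D_l=2l^2-l$, and $78, 133, 248, 52, 14$ for $E_6, E_7, E_8, F_4, G_2$) one checks each case: for the classical series this comes down to the fact that the dimension sums $\dim(D_j\oplus B_{l-j})$, resp. $\dim(C_j\oplus C_{l-j})$, are largest at an endpoint value of $j$, weighed against the single Levi candidate; for the exceptional types it is a handful of explicit comparisons (e.g. in $E_8$ one weighs $E_7\oplus A_1$ against $D_8$, $A_8$, $E_6\oplus A_2$, $A_4\oplus A_4$). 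The winners are exactly the entries of Table~\ref{Tab1}, with the stated codimensions.

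Finally, for uniqueness up to conjugacy I would use Lemma~\ref{Ldelta}: r-subalgebras given by $\Delta_1, \Delta_2\subset\Delta$ are $\operatorname{Adj}(\frak g)$-conjugate if and only if $w\Delta_1=\Delta_2$ for some $w\in W$. In every type the maximiser corresponds to a node, or a $W$-orbit of nodes, fixed by all diagram automorphisms, so uniqueness is immediate, with exactly two cases requiring a genuine check. In type $E_6$ the maximiser $D_5\oplus\mathbb F$ is the Levi of either of the two end nodes interchanged by the outer automorphism $\sigma$; these are nevertheless $W$-conjugate, because $\sigma$ and $w_0\in W$ differ by $-1$ and hence act identically on sub-root-systems, so $w_0$ carries one $D_5$-subsystem to the other and the class is still unique. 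In type $D_4$ the maximiser $A_3\oplus\mathbb F\cong D_3\oplus\mathbb F$ is the Levi of any of the three legs of the diagram, and triality --- which permutes the three $A_3$-subsystems transitively --- is not realised inside $W(D_4)$; these are the ``vector'' copy $\frak{so}(6)\oplus\mathbb F\subset\frak{so}(8)$ and its two ``spinor'' images, pairwise non-conjugate, which is precisely the exception in the statement. Carrying out these two verifications --- rather than reading off uniqueness from abstract isomorphism types --- is the only delicate point; everything else is bookkeeping.
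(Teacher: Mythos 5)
Your proposal is correct and takes essentially the same route as the paper: reduce to the Borel--de Siebenthal list of maximal-by-inclusion r-subalgebras (the paper cites the table of \cite{BS} directly rather than the extended-Dynkin-diagram formulation), then pick out the dimension-maximal entry in each type by elementary comparisons, noting the $D_4$ exception where $D_3\oplus\mathbb F$, $A_3\oplus\mathbb F$, $A_3'\oplus\mathbb F$ have equal dimension but are non-conjugate. The only cosmetic difference is that you verify uniqueness of the conjugacy class (e.g.\ the $E_6$ and $D_4$ checks) via Lemma~\ref{Ldelta} and diagram automorphisms, whereas the paper simply reads the conjugacy classes off the Borel--de Siebenthal table.
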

\begin{proof} The conjugacy classes of such subalgebras can be identified with the conjugacy classes of subgroups of maximal rank of the compact group attached to the root system of $\frak g$. The latter conjugacy classes are described in~\cite[Table]{BS}. We reproduce the needed part of~\cite[Table]{BS} below in Table~\ref{T3}.
\begin{table}[h!]\caption{}\label{T3}$\begin{array}{|c|c|c|}\hline \frak g&\frak m&\\
\hline A_l & A_i\oplus A_{l-i-1}\oplus T&1\le i< l, l\ge1\\
\hline B_l & D_l&l\ge 2\\
& B_i\oplus D_{l-i}&1\le i< l, l\ge 2\\
& B_{l-1}\oplus T&j\ge 2\\
\hline C_l& C_i\oplus C_{l-i}& 1\le i< l, l\ge 2\\
& A_{l-1}\oplus T&l\ge 2\\
\hline D_l& D_i\oplus D_{l-i}& 1\le i\le l, l\ge 4\\
& A_{l-1}\oplus T&l\ge 4\\
& D_{l-1}\oplus T&l\ge 4\\
\hline E_6&A_1\oplus A_5, A_2\oplus A_2\oplus A_2&\\
&D_5\oplus\mathbb F&\\
\hline E_7& A_1\oplus D_6, A_7, A_2\oplus A_5&\\
&E_6\oplus\mathbb F&\\
\hline E_8&D_8, A_1\oplus E_7, A_8&\\
&A_2\oplus E_6, A_4\oplus A_4&\\
\hline F_4& A_1\oplus C_3, B_4, A_2\oplus A_2&\\
\hline G_2& A_1\oplus A_1, A_2&\\
\hline \end{array}$\end{table}

We left to figure out which subalgebras of this list are maximal by dimension. For the exceptional groups this is very straightforward cause the list of maximal by inclusion subalgebras is finite. We proceed with the classical cases one-by-one.

Case A: we claim that $A_{l-1}\oplus \mathbb F$ has the maximal dimension in $A_l$, and thus that
$$\dim(A_{l-1}\oplus\mathbb F)>\dim (A_i\times A_{l-i-1}\oplus\mathbb F)$$
if $0<i<l-1$. This is equivalent to the following statements
$$\begin{array}{c}l^2>(i+1)^2+(l-i)^2-1,\\
2li-2i^2-2i\ge0,\\
2i(l-i-1)\ge0\end{array}(0<i<l-1).$$
The latter statement is trivial.

Case B: we claim that $D_{l}$ has the maximal dimension in $A_l$, and thus that
$$\begin{array}{ccc}(B1)&\dim D_l>\dim(B_i\oplus D_{l-i}),&~0<i<l, l\ge3\\
(B2)&\dim D_l>\dim(B_{l-1}\oplus\mathbb F),&l\ge3.\end{array}$$
We now verify statements (B1) and (B2).

Statement (B1) is equivalent to the following inequalities
$$\begin{array}{l}2l^2-l> (2i^2+i)+(2(l-i)^2-(l-i)),\\
2i(2l-i)-i> 2i^2+i,\\
2i(2l-2i-1)>0\\\end{array}(0<i<l).$$
The latter statement is trivial.

Statement (B2) is equivalent to the following inequalities
$$\begin{array}{l}2l^2-l> (2(l-1)^2+(l-1))+(2(l-i)^2-(l-i)),\\
2(l-1)>0\\\end{array}(l\ge2).$$
The latter statement is trivial.

Case C: we claim that $C_{l-1}\oplus C_1$ has the maximal dimension in $C_l$, and thus that
$$\begin{array}{ccc}(C1)&\dim(C_l\oplus C_1)>\dim(C_i\oplus C_{l-i}),&~1<i<l-1, l\ge2\\
(C2)&\dim(C_{l-1}\oplus C_1)>\dim(A_{l-1}\oplus\mathbb F),&l\ge2.\end{array}$$
We now verify statements (C1) and (C2).

Statement (C1) is equivalent to the following inequalities
$$\begin{array}{l}2(l-1)^2+(l-1)+3> (2i^2+i)+(2(l-i)^2-(l-i)),\\
2(i-1)(2l-i-1)-2(i-1)(i+1)>0,\\
4(i-1)(l-i-1)>0\\\end{array}(1<i<l-1).$$
The latter statement is trivial.

Statement (C2) is equivalent to the following inequalities
$$\begin{array}{l}2(l-1)^2+(l-1)+3> l^2,\\
(l-2)^2+l>0\\\end{array}(l\ge2).$$
The latter statement is trivial.

Case D: we claim that $D_{l-1}\oplus F$ has the maximal dimension in $D_l$, and thus that
$$\begin{array}{ccc}(D1)&\dim(D_{l-1}\oplus\mathbb F)>\dim(D_i\oplus D_{l-i}),&~1<i<l-1, l\ge4\\
(D2)&\dim(D_{l-1}\oplus\mathbb F)>\dim(A_{l-1}\oplus\mathbb F),&l\ge4.\end{array}$$
We now verify statements (D1) and (D2).

Statement (D1) is equivalent to the following inequalities
$$\begin{array}{l}2(l-1)^2-(l-1)+1> (2i^2-i)+(2(l-i)^2-(l-i)),\\
2(i-1)(2l-i-1)-(i-1)-2(i-1)(i+1)+(i-1)>0,\\\end{array}(1<i<l-1).$$
The latter statement is trivial.

Statement (D2) is equivalent to the following inequalities
$$\begin{array}{l}2(l-1)^2-(l-1)+1> l^2,\\
(l-1)(l-4)>0\\\end{array}(l\ge4).$$
If $l\ge 5$ the latter statement is trivial. If $l=4$ then $D_3=A_3$ and the subalgebras have the same dimension $D_3\oplus\mathbb F$ and $A_3\oplus\mathbb F, A_3'\oplus\mathbb F$ and are not conjugate in $D_4$.\end{proof}

%\begin{remark} In all these cases $\frak k$ can be presented as a direct sum of weights. If $\frak g\not\cong G_2$ then the subalgebra $\frak k$ of Table~\ref{T1} is symmetric with a trivial involution on the corresponding Vogan diagram.\end{remark}

\end{document}